\documentclass[12pt]{article}

\usepackage{graphicx}
\usepackage{amsmath,amsfonts,amssymb,amsthm,mathrsfs}
\usepackage{mathabx}
\usepackage[utf8]{inputenc}
\usepackage{bbm}
\usepackage{setspace}
\usepackage{bbm}
\usepackage{verbatim} 
\usepackage{xargs} 
\usepackage{authblk}
\usepackage[square,sort,comma,numbers]{natbib}
\usepackage{chngcntr}
\usepackage[font=small]{caption}
\usepackage
[draft=false,setpagesize=false,pdfstartview=FitH,
colorlinks=true,linkcolor=blue,citecolor=magenta,pagebackref=true,bookmarks=false]
{hyperref}

\usepackage{aliascnt}
\usepackage[capitalise,noabbrev,nameinlink]{cleveref}

\theoremstyle{theorem}
\newtheorem{theorem}{Theorem}[section]
\crefname{theorem}{Theorem}{Theorems}

\newaliascnt{lemma}{theorem}
\newtheorem{lemma}[lemma]{Lemma}
\aliascntresetthe{lemma}
\crefname{lemma}{Lemma}{Lemmas}

\newaliascnt{proposition}{theorem}
\newtheorem{proposition}[proposition]{Proposition}
\aliascntresetthe{proposition}
\crefname{proposition}{Proposition}{Propositions}

\newaliascnt{corollary}{theorem}

\aliascntresetthe{corollary}
\crefname{corollary}{Corollary}{Corollaries}

\theoremstyle{definition}
\theoremstyle{definition}
\newaliascnt{definition}{theorem}

\aliascntresetthe{definition}
\crefname{definition}{Definition}{Definitions}

\theoremstyle{remark}
\newtheorem{remark}{Remark}[]
\crefname{remark}{Remark}{Remarks}

\newaliascnt{example}{theorem}

\aliascntresetthe{example}
\crefname{example}{Example}{Examples}

\numberwithin{equation}{section}
\counterwithin{theorem}{section}
\counterwithin{figure}{section}

\crefname{theorem}{Theorem}{Theorems}
\crefname{lemma}{Lemma}{Lemmas}
\crefname{proposition}{Proposition}{Propositions}
\crefname{corollary}{Corollary}{Corollaries}
\crefname{definition}{Definition}{Definitions}
\crefname{remark}{Remark}{Remarks}
\crefname{subsection}{subsection}{subsections}

\bibliographystyle{alpha}

\usepackage{fullpage}

\newcommand{\R}{\mathbb{R}}
\newcommand{\N}{\mathbb{N}}
\newcommand{\e}{\varepsilon}
\newcommand{\cE}{\mathcal{E}}
\newcommand{\cK}{\mathcal{K}}
\newcommand{\cV}{\mathcal{V}}

\usepackage{braket,color}
\usepackage{physics2}
\usephysicsmodule{ab}
\usephysicsmodule{diagmat}
\usephysicsmodule{xmat}
\usepackage{derivative}
% line numbers
\usepackage[mathlines]{lineno} %for line numbers
%\pagewiselinenumbers 
\usepackage{authblk}
\usepackage{enumitem}
\usepackage{braket}
\usepackage{mathtools}

\DeclareMathOperator{\supp}{supp}
\DeclareMathOperator{\dist}{dist}

\def\wt#1{\widetilde{#1}}

\crefname{pluralthoerems}{Theorems}{Theorems}

\newcommand{\affKEIO}{\emph{\small{ Keio University, Department of Mathematics, Faculty of Science and Technology,
Yagami Campus, 3-14-1 Hiyoshi, Kohoku-ku, Yokohama, Kanagawa 223-8522, Japan}}}
\newcommand{\affMIMUW}{\emph{\small{ University of Warsaw, Faculty of Mathematics, Informatics and Mechanics, 
ul. Banacha 2, 02-097 Warszawa, Poland}}}

\author[1]{Norihisa  Ikoma}
\author[2]{Krzysztof Myśliwy \footnote{Corresponding author, kmys@mimuw.edu.pl}}
\affil[1]{\affKEIO}
\affil[2]{\affMIMUW}

%\author{Norihisa Ikoma and 
 %Krzysztof Myśliwy
%}

\title{Existence and order of the self--binding transition in non--local non--linear Schrödinger equations}

\begin{document}
\maketitle
\abstract{We consider a class of non--linear and non--local functionals giving rise to the Choquard equation with a suitably regular interaction potential, modelling, 
i.e., gases with impurities and axion stars. We study how existence of minimizers depends on the coupling constant, 
and find that there is a critical interaction strength needed for the minimizers to exist, both in dimensions two and three. 
In $d=3$, a minimizer exists also at the critical coupling but none do in $d=2$ under suitable assumptions on the potential. 
We also establish that in $d=3$ there exist other critical points beyond the global minimizer.}
\\

\textbf{Mathematics Subject Classification:}   35A15· 81V70
\\
\section{Introduction}

\subsection{Motivation}
\subsubsection{Bound states in classical and quantum dynamics}
Let $V:\mathbb{R}^d\rightarrow \mathbb{R}$ be an $L^{d/2}(\mathbb{R}^d)$ function with non--vanishing positive part $V^+:=\max \lbrace V,0\rbrace \not \equiv 0 $, 
and consider the basic problem of a point particle moving under the  influence of $(-V)$. 
Classically, if the initial position of the particle lies within the support of $V^+$ and its total energy $E$ is negative, 
then the trajectory of the particle is bounded in the support of $V^+$ 
and does not leave the set $\set{x | V(x)\geq -E}$: we say that the particle is \emph{trapped}. 

The quantum mechanical analogy of this trapping is the question of finding of what is called the \emph{bound states} of $-V$.  This corresponds to finding normalized $L^2$ minimizers of the functional 
\begin{equation}\label{scr-lin}
\mathcal{E}_s(u) \coloneq \int_{\mathbb{R}^d} \vab{\nabla u}^2 dx -\int_{\mathbb{R}^d} \vab{u}^2 V dx
\end{equation}
whose stationary points lead to the standard linear Schrödinger equation
\begin{equation*}\label{se}
-\Delta \phi-V \phi=e \phi
\end{equation*}
with $e:=\inf_{\Vab{u}_2=1}\mathcal{E}_s(u)$ being the \emph{ground state energy}.  A basic result is that if one knows a priori that $e<0$, 
then the minimizer necessarily exists (\cite[11.5 THEOREM]{LiebLoss} and \cite[Lemma 8.1.11]{Cazenave1}; 
thus, as in the classical case, a particle with negative total energy is trapped and cannot escape to infinity, since its stationary state is a genuine $L^2$ function, the minimizer $u$. 

In contrast to the classical dynamics in which one can always ensure the trapping for an arbitrary $V$ of the type given simply by setting the initial position and velocity accordingly, it is not always possible in the quantum case; for example in $d=3$, given $V$ sufficiently small in $L^{3/2}(\mathbb{R}^3)$ norm, one has $\mathcal{E}_s(u)>0$ for all $u$ with $\Vab{u}_2=1$ (e.g., by the Sobolev inequality) while there is a sequence of $L^2$ normalized functions for which $\mathcal{E}_s(u_j)\rightarrow 0$; consequently, a minimizer does not exist, and no binding takes place. For instance, in the case of two helium atoms, their corresponding interaction potential $V$ comes in with an $L^{3/2}$ norm that is too small to support a bound state between the two, which explains why helium does not crystallize even at extremely low temperatures \cite{Huang}.

%Consider the standard problem of point particles subject to conservative forces stemming from a potential $V$. Classically, any $V$ having a local minimum may \emph{trap} the particle it acts upon, i.e., the trajectory of the particle can be bounded to a region close to the minimum of $V$ provided the total energy is small enough. In quantum mechanics, the situation is different: in dimension $d=3$, the uncertainty principle sets a lower bound on the kinetic energy (as quantified, e.g., by Hardy's inequality \cite{}) that precludes the trapping, in the sense of the existence of an $L^2$ normalized solution for the corresponding Schroedinger equation, in case the local minima of $V$ are too shallow. However, in dimensions $d=2$ and lower, the situation again conforms with the classical intuition, and bound states always exist for $V$ having local minima, no matter how shallow. 

%For example, in the case of two helium atoms, their corresponding interaction potential $V$ comes in with its $L^{3/2}$ norm that is too small to support a bound state between the two, which explains why helium does not crystallize even at extremely low temperatures. 

Interestingly, the situation suggested by classical intuition is restored if the motion of the particle is constrained to lower dimensions; in $d=1$ and $d=2$, the functional \eqref{scr-lin} \emph{always} has a minimizer with $e<0$, provided that $\int V > 0$, in particular, $V^+\neq 0$, no matter how small $V$ is \cite{Simon}. 
%\red{(this is true, however, on which page of \cite{LS} this is written?)}.

For a succinct summary, let us fix $V: \mathbb{R}^d\to \mathbb{R}$ with $\int_{\mathbb{R}^d} V dx >0$ and a number $g>0$, and consider
\begin{equation}\label{scr}
	\begin{aligned}
		&\mathcal{E}_s(u;g) \coloneq \int_{\R^d} \vab{\nabla u}^2 dx -g\int_{\R^d} \vab{u}^2 V dx \quad 
		\text{for $u \in H^1(\R^d)$},
		\\
		&e_g \coloneq \inf \Set{ \mathcal{E}_s (u;g) | u \in H^1(\R^d) , \ \Vab{u}_2 = 1 }.
	\end{aligned}
\end{equation} 
Then, under suitable summability conditions on $V$, we have
\begin{enumerate}
\item 
when $d=1$ and $d=2$, for any $g > 0$ there exists $\varphi_g \in H^1(\R^d)$ such that 
\begin{equation*}
%\forall g>0 \quad \exists \varphi_g: 
\Vab{\varphi_g}_2=1,\quad  \mathcal{E}_s(\varphi_g;g)=e_g <0;
\end{equation*}

\item 
when $d\geq 3$, there exists $g^*$ such that $e_g < 0$ holds if and only if $g>g^*$ and 
$e_g$ is never attained for all $g < g^*$. 
%for all $g<g^*$, $\mathcal{E}_s(u;g) > 0$ holds 
%for every $u \in H^1(\R^d)$ with $\Vab{u}_2=1$, so that no minimizer of $\mathcal{E}_s(\cdot;g)$ over the unit sphere in $L^2(\mathbb{R}^d)$ exists. 
We say that a \emph{binding transition} takes place in this case. In the language of the theory of phase transitions, one can say that $d=3$ is the \emph{lower critical dimension} for the binding transition to occur.
\end{enumerate}

\subsubsection{Nonlinear binding: Brief description of main results}
The purpose of the present paper is to investigate how the above situation changes in case where the \emph{linear, local} model given by \eqref{scr} is replaced by a \emph{non--linear, non--local} functional known as the Pekar (or Hartree) functional: 
\begin{equation}\label{epek}
\mathcal{E}_g(u) \coloneq \frac{1}{2}\int_{\mathbb{R}^d} \vab{\nabla u(x)}^2 d x
-\frac{g}{4} \iint_{\mathbb{R}^d\times \mathbb{R}^d} \vab{u(x)}^2V(x-y)\vab{u(y)}^2 dx dy. 
%%+\int_{\mathbb{R}^d} w(x)  |\psi(x)|^2 dx
\end{equation}
The critical points of  $\mathcal{E}_g$, subject to the condition $\Vab{u}_2=1$, lead to the \emph{inhomogeneous Choquard's equation} 
\begin{equation}\label{choq}
-\Delta u - \frac{g}{2} \ab(|u|^2\ast V + |u|^2 \ast \check{V} )u = \lambda u \quad \text{where} \ \check{V} (x) \coloneq V(-x) 
\end{equation}
for $\lambda \in \mathbb{R}$ and the $\ast$ denotes convolution. 
Physically, one should think of this model as a description of situations where the surroundings of the particle act back on it in the form of a potential whose exact form depends on the state of the particle, $u$. Such scenarios are quite common in nature, and we shall give a few examples below. 
Notice that \eqref{choq} admits the translation invariance, not present in the Schrödinger equation with potential, 
and this reflects the fact that the underlying system (particle+surroundings) is translationally invariant. 

Now, under suitable assumptions on $V$, we shall prove the following (see \cref{thm,thm:gstar}):
\begin{enumerate}
\item For \emph{both} $d=2$ and $d=3$, there exists a \emph{critical} $g^* > 0$ such that $\mathcal{E}_g$ has no normalized $L^2$ minimizer for $g<g^*$ and does have at least one for $g>g^*$, while a minimizer exists when $d=1$ for all $g>0$; thus, the introduction of non--linearity and non--locality into the model \emph{shifts the lower critical dimension for the binding transition to occur from $d=3$ to $d=2$} as compared to the Schrödinger case \eqref{scr};
\item in $d=3$, a minimizer also exists for the corresponding critical $g^*$ while it does \emph{not} for $d=2$; in analogy to usual phase transitions, we thus say that the binding transition is of \emph{first order} in $d=3$ and of \emph{second order} in $d=2$.
\end{enumerate}
Below, we shall explain in more detail why we chose to refer to the above phenomenon as a first or second order transition.

It is enough to think of the first order transition in the sense given above as  simply to the existence of a minimizer on which the functional evaluates to zero, as it does for the zero function. The intuitive picture of a continuous \emph{function}  $f: [0,\infty) \to \mathbb{R}_{+}$ with $f(0)=0$ having the same property suggests that another critical point, beyond the minimizer, with a \emph{strictly positive} value of the functional should also exist. 
This is what we prove in \cref{thm:2ndsol-nonex}, in fact we prove that such \emph{metastable} solutions exist for all $g$ large enough, 
even for those smaller than $g_3^*$, in accordance with the intuition from thermal physics. This result not only strengthens the picture of first order transition but also has possible physical implications in the context of axion stars, as we briefly explain below.

%In particular, given $V$ and the embedding dimension $d$, does there exist a non--zero $g^*$ such that $\mathcal{E}_g(u)$ has no normalized $L^2$ minimizer for $g<g^*$ and does have at least one for $g<g^*$? If so, then \emph{what happens at $g=g^*$?) As we shall see, 
%As we see, the trapping transition has different properties than the ones encountered in the linear case, especially when it comes to dimensional analysis.

Before stating the main result in a more precise manner, let us provide additional motivation for the study of this particular model, and why the information on how the trapping transition is realized is of importance. 
\subsubsection{Background}

The functional \eqref{epek} appears in many physical contexts. Two mathematical extremes are the case of the delta interaction, $V(x)=\delta(x)$ which leads to the \emph{Gross--Pitaevskii} functional known from the theory of Bose--Einstein condensation for interacting Bose gases (here, one usually has $g<0$), and the case of the Coulomb interaction, $V(x)=\frac{1}{|x|}$ with $g>0$.  A mixture of these two, the \emph{Gross-Pitaevskii-Poisson functional} has been applied to model \emph{gravitationally bound axion stars}, which are among the candidates for dark matter. Note that existence/non--existence of a minimizer translates to the stability of such objects, which, in turn, is believed to have observable consequences. 
We refer to \cite{Cazenave1,CazenaveLions,Li76,Li80,Lions1,MorozVanShaftingen,Moll23,Stuart,Ye16} 
for a recent mathematical study of a related functional in this context.

%We refer to e.g. \cite{Moll23} for a recent mathematical study of a related functional in this context. 
%For $w
%\equiv0$, t

%The non--local NLSE \eqref{choq} has
%attracted serious attention in the mathematics literature, and we refer to \cite{,,} as exemplary studies. In the notable case $v(x)=\frac{1}{|x|}$ and $g=1$ it is known \cite{Lieb} that solutions to \ref{chow} exist and are unique up to translations and multiplication a phase factor, in particular, \ref{pek} has a minimizer which is normalized in $L^2$. Note that by scaling, the value of $g$ is in this case irrelevant, and hence a $L^2$ normalized minimizer of \ref{pek} exists for all $g>0$ if $v=|\cdot|^{-1}$. 

In our study, we are interested in a $V$ that lies beyond the two extremes mentioned, in particular, we impose certain summability conditions on $V$. Two important applications of such models are the following. First, they appear in the study of self--bound axion star dark matter where the binding of these structures can ensue with the intrinsic long--range attraction of the axions, with gravity effects negligible \cite{braatenzhang}. 
Secondly, the functional \eqref{epek} has recently resurfaced in the study of ionic atoms immersed in Fermi gases \cite{MJ24}. In this context, the exact form of $V$ depends on the details of how the atom--ion interactions are modelled. One important example is $V_b(x)=\frac{1}{(|x|^2+b^2)^2}$ which is a regularized potential having the $|x|^{-4}$ decay typical to ion--induced dipole interactions. In principle, the parameter $b$ can be imagined to be subject to experimental control using Feshbach resonances, and as such translates to out $g$. 
For such a class of $V$, Lions \cite{Li80,Lions1} treated the existence of minimizer. 
More precise results will be given in \cref{Rem1}. 
%Note that $V_b\in L^{\infty}(\mathbb{R}^3)\cap L^{3/2}(\mathbb{R}^3)$, in contrast to $|x|^{-1}$ which is in neither of the two (although it can be written as a sum of two functions, each in either one of the spaces). Moreover, no simple scaling applies for $\mathcal{E}_g$ constructed with $v_b$, and the value of $g$ is in principle relevant for the question of existence of minimizers.

%Finally, let us note that regardless of the physics background 
%functionals similar to \eqref{epek} have been studied also from the mathematics perspective; we refer to 
%\cite{Cazenave1,CazenaveLions,Lions1,MorozVanShaftingen,Stuart}. 
%sOur work is focused on this question in relation to the value of the coupling $g$ (equivalently, the norm of $V$), with emphasis on the resulting difference with the linear case. We believe that these kind of questions provide a fresh and intriguing way of looking at on non--linear variational problems, as we shall try to explain when connecting the results to the theory of phase transitions in statistical physics. 

\subsection{Main result}

Throughout this paper, we always assume $d=1,2,3$. 
For our results, we list assumptions on $V$: 

\begin{enumerate}[label=(A\arabic*)]
	\item \label{A1}
	When $d=1$, $V \in L^1(\R)$; when $d=2,3$, $V \in L^{d/2} (\R^d)$ and 
	for each $\e \in (0,1)$ there exist $q_\e \in (d/2,\infty]$, $V_{\e,1}, V_{\e,2}$ such that 
	$V= V_{\e,1} + V_{\e,2}$, $V_{\e,1} \in L^{q_\e} (\R^d)$ and $\Vab{V_{\e,2}}_{d/2} < \e$;

%	$V \in L^{ \max \Set{ 1 , d/2 } } (\R^d)$. 
%	$V \in L^{d/2} (\R^d)$;
	\item \label{A2}
	there exists $r_0>0$ such that $\displaystyle \inf_{B(0,r_0)} V \eqcolon V_0 > 0$; 
\end{enumerate}

\begin{enumerate}[label=(B\arabic*)]
	\item \label{B1}
	$V \in C^1(\R^d)$;
	\item \label{B2}
	the function $V$ is radial and non-increasing in $[0,\infty)$; 
	\item \label{B3}
	$V \in L^1(\R^d)$ and 
	$W(x) \coloneq x \cdot \nabla V (x) \in L^1(\R^d)$; 
	\item \label{B4}
	there exists an $r^*>0$ such that 
	$V(x) + W(x)/2 = 0$ is equivalent to $\vab{x} = r^*$. 
\end{enumerate}
For $g>0$ and $m>0$, we set 
\begin{equation}\label{minpro}
e(g,m) \coloneq \inf \Set{ \mathcal{E}_g(u) | u \in S_m }, \quad 
S_m \coloneq \Set{ u \in H^1(\R^d) | \Vab{u}_2^2 = m }.
\end{equation}

Our first result is to prove the existence of minimizers corresponding to $e(g,m)$ when $e(g,m) < 0$: 
\begin{theorem}\label{thm}
Let $d\in \set{ 1, 2, 3}$, $m>0$, {\rm \ref{A1}} and {\rm \ref{A2}} hold. 
Suppose $V \geq 0$ when $d=1$. 
Then there exists $g^*_d (m) \geq 0$ such that $g_d^*(m) = 0$ $(d=1)$, 
$g_d^*(m) > 0$ $(d=2,3)$, $e(g,m) < 0$ if and only if $g>g_d^*(m)$, 
and the minimizing problem \eqref{minpro} admits a minimizer for every $g>g^*_d(m)$. 
Furthermore, if $0<g<g^*_d(m)$, then \eqref{minpro} has no minimizer. 
%Then there exists $g^*_d(m) > 0$ such that if $g > g^*_d(m)$, then 
%the minimizing problem \eqref{minpro} admits a minimizer $u$ corresponding to $e(g,m) < 0$. 
%Furthermore, when $0<g \leq g^*_d(m)$, $e(g,m) = 0$ holds, and 
%if $0<g<g^*_d(m)$, then \eqref{minpro} has no minimizers. 
\end{theorem}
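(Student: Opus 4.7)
The plan is to set $g_d^*(m) \coloneq \inf\Set{g > 0 | e(g, m) < 0}$ (with the convention $\inf\emptyset = +\infty$) and verify each of the listed claims in turn. Three structural facts underlie the argument. The map $g \mapsto e(g,m)$ is concave and continuous, being an infimum of affine functions of $g$; it is non-increasing because \ref{A2} supplies trial $u \in S_m$ supported in $B(0,r_0)$ with $N(u) \coloneq \iint \vab{u(x)}^2 V(x-y) \vab{u(y)}^2\,dx\,dy > 0$; and $e(g,m) \le 0$ for every $g > 0$, as seen by evaluating $\mathcal{E}_g$ on a family $u_R$ of normalized plateau functions spread over $B_R$, for which both $\Vab{\nabla u_R}_2^2$ and $N(u_R)$ vanish as $R \to \infty$ by elementary Hölder estimates using $\Vab{V}_{d/2}$.

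To locate the threshold in dimensions $d\in\{2,3\}$, I would use the Young--Sobolev estimate
\[
N(u) \le C_d\,\Vab{V}_{d/2}\,\Vab{u}_2^2\,\Vab{\nabla u}_2^2 \quad \text{for } u \in H^1(\R^d),
\]
which on $S_m$ yields $\mathcal{E}_g(u) \ge \bigl(\tfrac{1}{2} - \tfrac{g\,C_d\,m\,\Vab{V}_{d/2}}{4}\bigr)\Vab{\nabla u}_2^2 > 0$ whenever $g < 2/(C_d\,m\,\Vab{V}_{d/2})$ and $u \not\equiv 0$, so that $g_d^*(m) > 0$. For $d = 1$, the opposite occurs: taking $u \in S_m \cap C_c^\infty$ and $u_\lambda(x) \coloneq \sqrt{\lambda}\,u(\lambda x)$, the sign hypothesis $V \ge 0$ combined with \ref{A2} gives
\[
N(u_\lambda) \ge V_0 \iint_{\vab{x-y}\le \lambda r_0} \vab{u(x)}^2 \vab{u(y)}^2\,dx\,dy \ge c\,\lambda
\]
for small $\lambda$, whereas $\Vab{\nabla u_\lambda}_2^2 = \lambda^2 \Vab{\nabla u}_2^2$; thus $\mathcal{E}_g(u_\lambda) < 0$ for every $g>0$ and $g_1^*(m)=0$. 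Sending $g\to\infty$ in any fixed trial with $N > 0$ also shows $e(g,m) \to -\infty$, so $g_d^*(m) < \infty$.

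For $g > g_d^*(m)$, I would run the Lions concentration-compactness alternative on a minimizing sequence $(u_n) \subset S_m$. Boundedness of $(u_n)$ in $H^1$ uses \ref{A1}: writing $V = V_{\e,1} + V_{\e,2}$, the $L^{d/2}$-small piece is absorbed into a fraction of $\Vab{\nabla u_n}_2^2$ by choosing $\e$ small, while $V_{\e,1} \in L^{q_\e}$ with $q_\e > d/2$ contributes a subcritical term bounded by $C_\e \Vab{\nabla u_n}_2^b$ with $b < 2$, producing coercivity. Vanishing is excluded because the same splitting then forces $N(u_n) \to 0$, giving $\liminf \mathcal{E}_g(u_n) \ge 0$, which contradicts $e(g,m) < 0$. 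Dichotomy is ruled out by strict subadditivity $e(g,m) < e(g,m_1) + e(g, m-m_1)$ for each $m_1 \in (0,m)$, which I establish by mass-rescaling: for $t \in (0,m)$ with $e(g,t) < 0$ and a near-minimizer $u_t$, the function $\tilde u \coloneq \sqrt{m/t}\,u_t \in S_m$ satisfies
\[
\mathcal{E}_g(\tilde u) = (m/t)\,\mathcal{E}_g(u_t) - (m/t)\bigl((m/t)-1\bigr)\,\tfrac{g}{4}\,N(u_t),
\]
and since $\tfrac{g}{4}N(u_t) = \tfrac{1}{2}\Vab{\nabla u_t}_2^2 - \mathcal{E}_g(u_t) \ge \vab{e(g,t)}/2 > 0$ eventually, one obtains $e(g,m) < (m/t)\,e(g,t)$, i.e.\ $t \mapsto e(g,t)/t$ is strictly decreasing on the negative region; a short case analysis, handling partitions where some $e(g,m_i)=0$ via the sign-change monotonicity of $e(g,\cdot)$, converts this into strict subadditivity. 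Thus compactness up to translation prevails; the weak $H^1$ limit $u_*$ is tight with $\Vab{u_*}_2^2 = m$, and passing to the limit in the nonlocal term by strong $L^p_{\mathrm{loc}}$-convergence plus the $V$-splitting yields $\mathcal{E}_g(u_*) \le \liminf \mathcal{E}_g(u_n) = e(g,m)$, making $u_*$ a minimizer.

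Non-existence for $0 < g < g_d^*(m)$ is short: by the above, $e(g,m)=0$ on this range, and if some $u \in S_m$ achieved $\mathcal{E}_g(u)=0$, then either $N(u) > 0$---forcing $\mathcal{E}_{g'}(u) = -\tfrac{g'-g}{4} N(u) < 0$ for every $g' > g$ and contradicting $e(g',m) = 0$ throughout $(g, g_d^*(m))$---or $N(u) \le 0$, in which case $\mathcal{E}_g(u) \ge \tfrac{1}{2}\Vab{\nabla u}_2^2 > 0$; both options contradict $\mathcal{E}_g(u)=0$. The principal obstacle I anticipate is the strict-subadditivity step: with no homogeneity available for $V$, the usual Choquard-style scaling argument does not apply, so one must rely on the mass-rescaling computation above and treat partitions lying in the zero-energy regime with extra care.
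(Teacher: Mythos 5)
Your proposal is correct and follows essentially the same route as the paper: Young and Gagliardo--Nirenberg/Sobolev bounds for the positivity of $g_d^*(m)$ in $d=2,3$, a dilation argument for $g_1^*(m)=0$, concentration--compactness with vanishing excluded by $e(g,m)<0$ and dichotomy excluded by a mass-rescaling strict-subadditivity argument (the paper's \cref{sad2} is exactly your rescaling inequality in a different normalization), and the observation that a minimizer at $g$ forces $e(g',m)<0$ for all $g'>g$ to rule out minimizers below the threshold. The only notable, and harmless, presentational differences are that you obtain continuity of $g\mapsto e(g,m)$ from concavity (infimum of affine functions) rather than the paper's direct one-sided limits, and that your monotonicity in $g$ should be attributed to concavity together with $e(0,m)=0\geq e(g,m)$ (or to the nonnegativity of $\cV$ along minimizing sequences, as in the paper) rather than to \ref{A2}.
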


\begin{remark}\label{Rem1}
	\begin{enumerate}
		\item 
		When $d=1$, if the condition $V \geq 0$ is dropped, then 
		there exists $V$ with \ref{A1} and \ref{A2} such that $g_1^*(m) > 0$ holds for all $m > 0$ (see \cref{Sec7}). 
		\item 
		In \cite{Li80,Lions1}, the existence of minimizers was shown under the assumption $e(g,m) < 0$ and $V \in L^p_w(\R^d) + L^q_w(\R^d)$ 
		with $p,q > d/2$ if $d \geq 3$ and $p,q > 1$ if $d=1,2$. 
	\end{enumerate}
	
\end{remark}

We next investigate \eqref{minpro} in the case $g=g^*_d(m)$:

\begin{theorem}\label{thm:gstar}
	Let $m>0$ and suppose {\rm \ref{A1}} and {\rm \ref{A2}}. 
	\begin{enumerate}
		\item 
		When $d=3$ and $g=g^*_3(m)$, the minimizing problem \eqref{minpro} has a minimizer. 
		\item 
		When $d=2$ and $g=g^*_2(m)$, assume also {\rm \ref{B1}--\ref{B4}}. 
		Then \eqref{minpro} admits no minimizers. 
	\end{enumerate}
\end{theorem}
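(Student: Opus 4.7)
My plan is to apply Lions' concentration-compactness to a minimizing sequence for $e(g_3^*(m),m)$. Since $g \mapsto e(g,m)$ is concave as an infimum of affine functions, it is continuous, and the definition of $g_3^*$ forces $e(g_3^*,m) = 0$. Take a minimizing sequence $(u_n) \subset S_m$ with $\mathcal{E}_{g_3^*}(u_n) \to 0$; combining the identity $\tfrac{1}{2}\|\nabla u_n\|_2^2 - \tfrac{g_3^*}{4} \iint|u_n|^2 V(x-y)|u_n|^2 \to 0$ with the $d = 3$ Hardy--Littlewood--Sobolev bound $\iint|u|^2 V(x-y)|u|^2 \le C \|V\|_{3/2} \|u\|_6^2 \, m$ and the Sobolev embedding $H^1(\R^3) \hookrightarrow L^6$ produces a uniform $H^1$-bound on $(u_n)$. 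The main work is then to exclude vanishing and dichotomy. Dichotomy is excluded by strict subadditivity $e(g,m) < e(g,m_1) + e(g,m-m_1)$ for $0 < m_1 < m$, which follows from the rescaling $e(g,m) = m\, e(gm,1)$ together with strict concavity. Vanishing is ruled out by exploiting the strict inequality $e(g_3^* + \varepsilon, m) < 0$ furnished by \cref{thm}: a purely spreading profile produces energy of order $\|\nabla u_n\|_2^2$, which cannot match the negativity attainable just beyond the critical threshold. After translation, $u_n \to u^* \in S_m$ strongly in $L^2$, and lower semicontinuity yields $\mathcal{E}_{g_3^*}(u^*) \le 0 = e(g_3^*,m)$, so $u^*$ is a minimizer.

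\textbf{Part (ii), $d = 2$.} I would argue by contradiction: suppose $u^* \in S_m$ attains $e(g_2^*,m) = 0$. By Riesz's rearrangement inequality combined with \ref{B2}, I may take $u^*$ radial, non-negative, and non-increasing in $|x|$; then $G(w) := \int |u^*(x)|^2 |u^*(x+w)|^2\,dx$ is likewise radial and non-increasing in $|w|$, as the autocorrelation of a symmetric-decreasing function. The scaling $u^*_\lambda(x) := \lambda\, u^*(\lambda x)$ preserves $S_m$ in $d = 2$, so $\lambda = 1$ is a critical point of $\lambda \mapsto \mathcal{E}_{g_2^*}(u^*_\lambda)$; using \ref{B1} and \ref{B3} to justify differentiation under the integral and combining the resulting Pohozaev-type identity with $\mathcal{E}_{g_2^*}(u^*) = 0$ gives
\begin{equation*}
\iint |u^*(x)|^2 \bigl( 2V + W \bigr)(x-y)\, |u^*(y)|^2 \, dx\, dy \;=\; 0,
\end{equation*}
equivalently $\int_0^\infty r\, G(r)\, h(r)\, dr = 0$ where $h(r) := 2V(r) + W(r)$. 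Since $h(0) = 2V(0) > 0$, assumption \ref{B4} forces $h > 0$ on $[0, r^*)$ and $h < 0$ on $(r^*, \infty)$.

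Define $H(r) := \int_0^r s\, h(s)\, ds$. Using \ref{B3} and integration by parts ($\int W = -2 \int V$ in $d = 2$), one gets $\int_{\R^2}(2V + W)(w)\,dw = 0$, hence $H(\infty) = 0 = H(0)$; combined with the sign pattern of $h$, one checks that $H(r) > 0$ for every $r \in (0, \infty)$. Integrating the radial Pohozaev identity by parts,
\begin{equation*}
0 \;=\; \int_0^\infty G(r)\, H'(r)\, dr \;=\; \bigl[G H\bigr]_0^\infty - \int_0^\infty G'(r)\, H(r)\, dr \;=\; \int_0^\infty \bigl(-G'(r)\bigr)\, H(r)\, dr,
\end{equation*}
an integral of a non-negative function. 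Hence $(-G'(r))H(r) = 0$ almost everywhere; interpreting $-dG$ as a non-negative Borel measure on $(0,\infty)$ and using that $H$ is bounded below by a positive constant on every compact subinterval, this forces $-dG \equiv 0$ on $(0, \infty)$. So $G$ is constant on $(0,\infty)$, and combined with $G(\infty) = 0$ (which holds by Riemann--Lebesgue since $|u^*|^2 \in L^1 \cap L^2$) this forces $G \equiv 0$, hence $u^* \equiv 0$ -- contradicting $u^* \in S_m$ with $m > 0$.

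The main obstacle in (i) is the exclusion of vanishing at criticality: the target infimum is $0$, so a spreading sequence does not by itself contradict energy convergence, and one must leverage the strict inequality $e(g_3^* + \varepsilon, m) < 0$ delivered by \cref{thm} via the borderline HLS--Sobolev estimate in $d = 3$. In (ii), the crux is triple: justifying the Pohozaev differentiation (for which \ref{B1} gives $W$ pointwise and \ref{B3} delivers $W \in L^1$), reducing to the radial-monotone case via \ref{B2} so that $G$ is indeed non-increasing, and extracting from \ref{B4} the sharp sign of $H$ that turns the Pohozaev equality into the forced vanishing of $G$.
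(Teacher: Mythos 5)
Your Part (ii) is essentially correct and is a legitimate variant of the paper's argument: you derive the Pokhozaev-type identity $\iint u^2\,(2V+W)(x-y)\,u^2\,dx\,dy=0$ by differentiating along the mass-preserving dilation $u_\lambda(x)=\lambda u(\lambda x)$ instead of testing the Euler--Lagrange equation against cutoffs of $x\cdot\nabla u$ (which is what the paper does, at the cost of first proving decay estimates), and you reach the contradiction by an integration by parts against $H(r)=\int_0^r s\,h(s)\,ds$ rather than the paper's direct $\inf/\sup$ comparison of $h=u^2*u^2$ on $\{|x|<r^*\}$ versus $\{|x|>r^*\}$. Both steps are sound modulo routine justification of differentiation under the integral (B1, B3) and of Lebesgue--Stieltjes integration by parts for the monotone function $G$.

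Part (i), however, has a fatal gap. You take an \emph{arbitrary} minimizing sequence for $e(g_3^*(m),m)=0$ and claim to exclude vanishing. This cannot work: the dilated profiles $u_t=t^{3/2}\varphi(tx)$ with $t\to 0^+$ form a minimizing sequence (their energy tends to $0$, which \emph{is} the infimum at the critical coupling) and they vanish. Your stated mechanism --- ``leveraging $e(g_3^*+\varepsilon,m)<0$'' because a spreading profile ``cannot match the negativity attainable just beyond the critical threshold'' --- is not an argument about the sequence at hand, whose target value is $0$, not a negative number. The paper's resolution is to work with a \emph{specific} minimizing sequence: minimizers $u_j$ at couplings $g_j\downarrow g_3^*(m)$, for which the linear operator $-\tfrac12\Delta-\tfrac{g_j}{2}V*u_j^2$ has a negative eigenvalue; the Cwikel--Lieb--Rozenblum bound then gives the uniform lower bound $1\le C\int_{\R^3}(V_+*u_j^2)^{3/2}\,dx$, which is incompatible with $\|u_j\|_q\to 0$ for $q\in(2,6)$. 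Some device of this kind (a quantitative obstruction attached to a chosen sequence) is unavoidable, and it is absent from your proof.

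A second error in Part (i): strict subadditivity $e(g,m)<e(g,m_1)+e(g,m-m_1)$ is \emph{false} at $g=g_3^*(m)$. Since $m'\mapsto e(g,m')$ is non-increasing and $e\le 0$ always, $e(g_3^*(m),m')=0$ for every $m'\le m$, so all three terms vanish and the inequality is an equality. (Your proposed derivation from $e(g,m)=m\,e(gm,1)$ plus ``strict concavity'' also assumes a strictness of concavity that is not established.) Dichotomy must instead be excluded through the structure of the weak limit: if $0<\|\omega\|_2^2<m$, then $\mathcal{E}_{g_3^*}(\omega)=0$ together with $\mathcal{K}(\omega)>0$ forces $\mathcal{V}(\omega)>0$, and rescaling $\omega$ to full mass $m$ yields a competitor with strictly negative energy, contradicting $e(g_3^*(m),m)=0$. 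This is the argument the paper uses, and it is the one you need here.
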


\begin{remark}\label{rem:exam}
	\begin{enumerate}
		\item 
		Conditions \ref{A1}, \ref{A2} and \ref{B1}--\ref{B4} when $d=2$ are verified by 
		the regularized ion--atom potential $V(x) = 1/ (\vab{x}^2 + b^2)^2$. 
		In fact, direct computations imply 
		\[
		W(x) = - \frac{4\vab{x}^2}{(\vab{x}^2 + b^2)^3}, \quad 
		V(x) + \frac{W(x)}{2} = \frac{b^2-\vab{x}^2}{(\vab{x}^2+b^2)^3}.
		\]
		Thus, for any $b>0$, \ref{A1}, \ref{A2} and \ref{B1}--\ref{B4} hold. 
		\item 
		When $d=1$ and $g^*_d(m) > 0$, the existence of minimizers is not straightforward and remains open. 
		To our knowledge, the problem of characterizing the critical $g$ necessary for a bound state to arise if $\int V<0$ in $d=1$ and $d=2$ appears not to have been fully resolved \emph{even in the linear case. }
		
		\item 
		For a local nonlinearity, a similar existence result at the threshold mass was obtained in \cite{Sh14}. 
	\end{enumerate}
	
\end{remark}

Our final result concerns the existence and non--existence of \emph{metastable states} in $d=3$. This physical notion corresponds to local minimizers and other critical points of the functional. As discussed below, their existence for some $0<g<g^{*}_3(m)$ is to be expected based on the classification of the energy landscape in terms of a first order transition. Similarly, for small enough couplings, one expects that no critical points exist. In this context, we are able to show that for sufficiently small $g$, there exist no critical points which are either radial or strictly positive. 
It is worth mentioning that the existence of local minimizer for a local nonlinearity was proved in \cite{JeLu22a}. 
%existence of the second critical point of $\cE_g|_{S_m}$ 
%and the nonexistence of any critical point of $\cE_g|_{S_m}$ for $d=3$: 

\begin{theorem}\label{thm:2ndsol-nonex}
	Let $d=3$, $m>0$ and assume {\rm \ref{A1}}, {\rm \ref{A2}}.
	\begin{enumerate}
		\item 
		Suppose $W (x) = x \cdot \nabla V(x) = W_1 + W_\infty \in L^1(\R^3) + L^\infty(\R^3)$ with $W_\infty(x) \to 0$ as $\vab{x} \to \infty$.  Assume that $V$ is radial with $V \geq 0$ in $\R^3$. 
		Then there exists $g_1 > 0$ such that 
		for all $g \in ( g_1 , \infty )$ 
		the functional $\cE_g|_{S_m}$ has a critical point $u \in S_m$ with $\cE_g(u) > 0$. 
		Furthermore, if $V$ is non-increasing, then the constant $g_1$ can be chosen so that $g_1< g^*_3(m)$. 
		
		\item 
		There exists $g_3 \in (0,g^*_d(m))$ and $\rho_0 > 0$ such that for all $g \in (g_3, g^*_d(m) )$,	the problem 
	\begin{equation}\label{loc-minpro}
		\wt{e} (g,m) \coloneq \inf \Set{ \cE_g (u) | u \in S_m, \ \Vab{\nabla u}_2^2 > \frac{\rho_0^2}{4}  }
	\end{equation}
	admits a minimizer. Note $\wt{e} (g,m) > 0$ holds since $g<g^*_d(m)$ and there is no minimizer for $e(g,m) = 0$. Thus, the functional $\cE_g|_{S_m}$ admits a local minimizer, corresponding physically to a \emph{metastable state} of the problem.

		\item 
		Finally, suppose {\rm \ref{B1}} with  $W (x) \coloneq x \cdot \nabla V(x) \in L^{3/2} (\R^3)$. 
		Then there exists $g_2 \in (0,\infty)$ such that 
		for any $g \in (0,g_2)$ the functional $\cE_g|_{S_m}$ does not admit any critical point $u$ with $u \geq 0$ (resp. $u \leq 0$)
		or $u(x) = u(\vab{x})$. 
	\end{enumerate}
\end{theorem}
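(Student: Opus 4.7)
The plan is to dispatch the three parts via distinct variational devices, all exploiting the mass-preserving dilation $u_\tau(x)=\tau^{3/2}u(\tau x)$, the Pohozaev identity obtained by differentiating $\tau\mapsto\cE_g(u_\tau)$ at a critical point, and the compactness of the radial Sobolev embedding into $L^p$ for $2<p<6$.

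For (1), I would seek a mountain-pass critical point of $\cE_g$ on $S_m\cap H^1_{\mathrm{rad}}(\R^3)$. Under \ref{A1}, the Hardy--Littlewood--Sobolev and Sobolev inequalities give
\[
\cE_g(u)\;\geq\;\tfrac12\Vab{\nabla u}_2^2 - C_0 g\,\Vab{\nabla u}_2^4\qquad (u\in S_m),
\]
producing $\rho_0,\alpha_0>0$ and a positive barrier $\cE_g\geq\alpha_0$ on the ring $\{\Vab{\nabla u}_2=\rho_0\}$. For $g>g^*_3(m)$ the global minimizer $u_*$ from \cref{thm} satisfies $\cE_g(u_*)<0$ and (by Pohozaev) $\Vab{\nabla u_*}_2>\rho_0$, while the dilation ray $\tau\mapsto (u_*)_\tau$ drops the energy to $0$ as $\tau\to 0^+$, crossing the ring and yielding mountain-pass geometry. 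Jeanjean's augmented-functional trick on $\R\times(S_m\cap H^1_{\mathrm{rad}})$ then produces a bounded Palais--Smale sequence on which the Pohozaev identity holds in the limit, and the radial compact embedding delivers a strongly convergent subsequence whose limit is the desired critical point with $\cE_g\geq\alpha_0$. The refinement $g_1<g^*_3(m)$ for non-increasing $V$ uses the metastable state from (2) in place of $u_*$: its dilation ray crosses the ring and brings the energy below $\alpha_0$, preserving mountain-pass geometry in a left neighborhood of $g^*_3(m)$, and is the first delicate point.

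For (2), fix $\rho_0,\alpha_0$ as in (1) and set $A\coloneq\{u\in S_m:\Vab{\nabla u}_2^2>\rho_0^2/4\}$; the same energy estimate yields $\cE_g\geq\alpha_1>0$ on $\partial A$. Using the critical-coupling minimizer $u_\sharp$ of \cref{thm:gstar},
\[
\cE_g(u_\sharp)\;=\;\cE_{g^*_3(m)}(u_\sharp) + \tfrac{g^*_3(m)-g}{4}\iint\vab{u_\sharp}^2 V\vab{u_\sharp}^2\,dx\,dy\;\searrow\;0\quad\text{as}\quad g\nearrow g^*_3(m),
\]
and $u_\sharp\in A$ via Pohozaev at $g^*_3(m)$, so $\wt e(g,m)<\alpha_1$ on some interval $(g_3,g^*_3(m))$. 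A minimizing sequence inside $A$ is bounded in $H^1$ by the energy estimate combined with the Sobolev control of $\iint u_n^2 V u_n^2$, and a Lions concentration--compactness dichotomy rules out vanishing (which would force $\wt e\geq\rho_0^2/8$ against $\wt e<\alpha_1$) and splitting (which would push the weak limit onto $\partial A$, where the barrier $\alpha_1$ is violated); the strong limit is an interior minimizer. The absence of a strict subadditivity statement for $e(g,m)$ in this regime makes ruling out splitting the second delicate point.

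For (3), any critical point $u\in S_m$ solves \eqref{choq} with multiplier $\lambda$. Pohozaev reads $\Vab{\nabla u}_2^2 = -\tfrac{g}{4}\iint\vab{u}^2 W(x-y)\vab{u}^2\,dx\,dy$, and testing \eqref{choq} against $u$ gives $\Vab{\nabla u}_2^2 - g\iint\vab{u}^2 V\vab{u}^2 = \lambda m$. So $u$ is an $L^2$ eigenfunction with eigenvalue $\lambda$ of the Schrödinger operator $L_u\coloneq -\Delta-\tfrac{g}{2}\ab(\vab{u}^2\ast V+\vab{u}^2\ast\check V)$. Young's inequality ($V\in L^{3/2}$, $\vab{u}^2\in L^1$) followed by Sobolev yields
\[
\langle v,L_u v\rangle\;\geq\;\ab(1-C_1 g\Vab{V}_{3/2}m)\Vab{\nabla v}_2^2\qquad\forall\, v\in H^1(\R^3),
\]
so for $g<g_2\coloneq(C_1\Vab{V}_{3/2}m)^{-1}$ the operator $L_u$ is strictly positive. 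For $u\geq 0$ (resp. $u\leq 0$), the Perron--Frobenius principle forces $\lambda=\inf\sigma(L_u)$; since $\sigma_{\mathrm{ess}}(L_u)=[0,\infty)$ and $L_u\geq 0$, $\inf\sigma(L_u)=0$, so $\lambda=0$, whence $L_u u=0$ and $L_u\geq c(-\Delta)$ force $u\equiv 0$, a contradiction. For radial $u$, the reduction $w(r)=r u(r)$ gives the half-line equation $-w''=(\lambda+g K_u(r))w$ with $w(0)=0$; positivity of $L_u$ on radial functions forces $\lambda\geq 0$, while for $\lambda>0$ the asymptotic solutions oscillate like $\sin(\sqrt\lambda r+\phi)$ and fail to be in $L^2((0,\infty))$, and $\lambda=0$ again forces $u\equiv 0$.
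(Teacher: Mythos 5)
Your overall architecture matches the paper's (mountain pass on the radial sphere with Jeanjean's augmented functional for part 1; local minimization via concentration compactness for part 2; Pokhozaev for part 3), but several load-bearing steps are missing or incorrect. (i) The barrier inequality $\cE_g(u)\geq\tfrac12\Vab{\nabla u}_2^2-C_0g\Vab{\nabla u}_2^4$ does not follow from \ref{A1}: for $V\in L^{3/2}(\R^3)$, Young (or HLS) plus Sobolev give only $\cV(u)\leq Cm\Vab{\nabla u}_2^2$, which is quadratic in $\Vab{\nabla u}_2$ and produces no mountain-pass ring. The paper's \cref{l:posbdry} obtains the barrier by splitting $V$ into an $L^1$ core (contributing a cubic term via Gagliardo--Nirenberg) and an $L^{3/2}$-small tail. (ii) Radial compactness does not by itself "deliver" the critical point: the weak limit of the Palais--Smale sequence could vanish, and excluding this is precisely where the hypothesis $W=W_1+W_\infty$ with $W_\infty\to0$ and the extra information $\partial_\theta\wt{\cE}_g(\theta_n,u_n)\to0$ supplied by the augmented functional are used (\cref{ConvPS}); your sketch of part 1 never invokes $W$. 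One must also show the limiting Lagrange multiplier is positive (the paper uses $V\geq0$ and the fact that no nonnegative superharmonic $L^2$ function exists) to upgrade $L^q$ convergence, $2<q<6$, to recovery of the mass constraint. For part 2 you rightly flag the dichotomy as unresolved, but the mechanism you propose ("splitting pushes the weak limit onto $\partial A$") is not how splitting manifests: if mass escapes, the weak limit $v_0$ satisfies $\Vab{v_0}_2^2<m$ while remaining interior to $\mathcal{A}$. The paper closes this case with the monotonicity of $m'\mapsto\wt{e}(g,m')$ (\cref{Lem:mono}) combined with a Lagrange-multiplier/scaling analysis of $t\mapsto\cE_g(tv_0)$ forcing $\Vab{v_0}_2^2=m$.

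For part 3 you write down the decisive identity $\Vab{\nabla u}_2^2=-\tfrac{g}{4}\int_{\R^3}(W*u^2)u^2\,dx$ and then abandon it. Applying Young's and Sobolev's inequalities directly to its right-hand side with $W\in L^{3/2}(\R^3)$ gives $\Vab{\nabla u}_2^2\leq\tfrac{C_Smg}{4}\Vab{W}_{3/2}\Vab{\nabla u}_2^2$, which is the paper's one-line conclusion and yields $g_2=4/(C_Sm\Vab{W}_{3/2})$. The spectral detour you substitute has a genuine gap: when $L_u\geq0$ has no spectrum below $\inf\sigma_{\mathrm{ess}}(L_u)=0$, the claim that a nonnegative eigenfunction must correspond to $\inf\sigma(L_u)$ is not the standard Perron--Frobenius statement (you would need to exclude positive embedded eigenvalues of $-\Delta-g(V*u^2)$, nontrivial without quantified decay of $V*u^2$), and the radial oscillation argument is heuristic. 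Finally, the real work hidden behind "Pohozaev reads\ldots" is justifying that identity for a mere constrained critical point; this is exactly where the hypotheses $u\geq0$ or $u$ radial enter in the paper, through the decay estimates of \cref{l:decay}, and your proposal does not address it.
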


\begin{remark}
	The critical point $u$ in \cref{thm:2ndsol-nonex} 1 is obtained via the mountain pass theorem for $\cE_g|_{S_m}$. 
	Therefore, $u$ may be different from the local minimizer $v$ of \eqref{loc-minpro}. 
	In fact, if $g<g^*_3(m)$ is close to $g^*_3(m)$, then $\cE_g(v) < \cE_g(u)$ holds. 
\end{remark}

\begin{remark}
By \cref{thm:gstar,thm:2ndsol-nonex}, 
under suitable assumptions on $V$, when $d=3$ (resp. $d=2$), a minimizer exists if and only if $g \geq g_3^*(m)$ (resp. $g>g_2^*(m))$. 
The reason behind referring to this distinction as a ``first order" or ``second order" transition, respectively, lies in the analogy to the usual phase transitions in statistical physics. At a first order transition, distinct stable phases change discontinuously one into the other: think of the usual boiling of water, where the density abruptly becomes very small as one reaches the boiling point. If the transition is of second order, the phases change continuously: an example is provided by the process of demagnetizing a piece of iron upon heating.  In our case, the transition is of first order in $d=3$ as we have two \emph{coexisting} classes of minimizing sequences at the critical value $g_3^*(m)$, those converging to a genuine minimizer, corresponding to a bound state, and those converging weakly to zero, corresponding to a free particle with vanishing kinetic energy. In $d=2$, under the additional assumptions, one always has either of the two scenarios happening, with their coexistence at any $g$ excluded. 

%As explained in \cite{}, if our non--linear models model is employed to study ionic polarons, the order of the trapping transition is e.g. reflected in the behavior of the \emph{effective mass}, an important physical quantity pertinent to impurity problems across physics. 
\end{remark}

\begin{remark}
The first order character of the transition in $d=3$ is further strengthened by \cref{thm:2ndsol-nonex}. 
In statistical physics, first order transitions are accompanied by the existence of \emph{metastable} states, which are local, rather than global equilibrium states of the system (a very good qualitative discussion can be found in \cite[Chapter 9]{Callen}).
These correspond to stationary states of the Gibbs functional other than its global minimum
and start to appear away from the transition point: one common example is \emph{supercooled water}. In our case, these metastable states are the local minimizers whose existence is established in \cref{thm:2ndsol-nonex}. 
%Their existence is typically accompanied by a ``potential barrier", 
%\red{I wonder the following sentence. In general, critical points obtained via the mountain pass theorem may not be local maximum. }i.e. local maxima of the free energy.
% which survive for \emph{all} sufficiently large values of the control parameters.
 This picture is in line with the existence of positive energy critical points in \cref{thm:2ndsol-nonex}.
%\red{\textbf{Question:} The solution in \cref{thm:2ndsol-nonex} may not be a local minimum of $\cE |_{S_m} $ since this is obtained by the mountain pass theorem. 
%he above description is fine?}

%The hierarchy of constants $g_2<g_3<g_3^*$ is also natural: first, metastable states appear, which, as the interaction term becomes more relevant due to the increase in $g$, these "take over" and become actual equilibrium states, comp. Fig..??.  
\end{remark}

\begin{remark}
In the context of axion stars, the spontaneous decay of metastable bound structures corresponding to the positive energy solutions of \cref{thm:2ndsol-nonex}  into unbound structures should bear observable consequences as the positive binding energy is released. We are unable to provide bounds on this energy, given by a suitable mountain pass critical point value, and leave it as an open problem. 
\end{remark}

\begin{remark}
	According to \cref{thm}, when $d=1$ and $V \in L^1(\R)$ satisfies $V \geq 0$, 
	there is always a minimizer. 
	%Note that under the specified assumptions on $V$, there is always a minimizer in $d=1$ if one assumes in addition that $V\geq 0$. 
	%By a scaling argument, it is simple to deduce that for each $g$ there is a function $u$ for which $\mathcal{E}_g(u)<0$. This is enough to ensure the existence of a minimizer, as can be inferred from the main proof via concentration compactness. 
	Thus, $d=2$ is the \emph{lower critical dimension} for the transition to occur in the case of \eqref{epek} within the class of purely attractive potentials 
	(meaning $V\geq 0 $ and $V\neq 0$), in contrast to \eqref{scr}, where the lower critical dimension for this class is $d=3$.
\end{remark}

Finally, we comment on ideas and difficulties of the proofs of \cref{thm,thm:gstar,thm:2ndsol-nonex}. 
The proof of \cref{thm} is standard and 
we employ the concentration compactness principle due to \cite{Lions1,Lions2}. 
Here a slightly modified argument in \cite{Cazenave1} is used and 
a similar argument can be found in \cite{JeLu20,JeLu22a}. 
The inequality $e(g,m) < 0$ is useful to find a minmizer and to prove it, \ref{A2} and $g > g_d^*(m)$ play a role. 
The positivity of $g_d^*(m)$ is a consequence of Young's and Sobolev's inequality ($d=3$) and Young's and 
the Gagliardo--Nirenberg inequality ($d=2$).

For the proof of \cref{thm:gstar} when $d=3$, 
we cannot argue as in the proof of \cref{thm} since $e( g^*_3(m) , m ) = 0$ holds. 
Here we choose a sequence $\{g_j\}_j$ so that $g_j \searrow g^*_3(m)$ and 
by \cref{thm}, let $u_j \in S_m$ be a minimizer for $e(g_j,m) < 0$. 
Then $\{u_j\}_j$ is a minimizing sequence for $e(g^*_3(m),m) = 0$. 
To avoid the vanishing of $\{u_j\}_j$, we shall apply the Lieb-Thirring (the Cwikel-Lieb-Rozenblum) inequality 
for operators $-\Delta - g(V * u_j^2)/2  $, which admits at least one negative eigenvalue. 
We believe this approach nicely combines operator theory and nonlinear analysis. 
It is worth to noting that \cite{MeSz} uses the same inequality in a different fashion. 
Once the vanishing is ruled out, an argument similar to that of \cref{thm} is used to find a minimizer.  
On the other hand, in the case $d=2$, the above approach breaks down and 
we will prove the nonexistence of minimizers. 
This task is delicate and to achieve this goal, 
we first establish a suitable non--local Pokhozaev--type identity (\cref{p:Poho}) for any 
critical point of $\cE_{g}|_{S_m}$ with nonpositive energy. 
Here we closely follow the arguments in \cite[section 3]{MorozVanShaftingen13}. 
Then we argue by contradiction and 
combine the identity with \ref{B1}--\ref{B4} and the monotonicity of the minimizers to get a contradiction. 
As pointed outed in \cref{rem:exam}, our conditions include some important potentials, however,  
we do not know to which extent conditions on $V$ are necessary, and leave the general case as an open problem.

The proof of the solvability of the local minimizing problem in \cref{thm:2ndsol-nonex} is based on ideas from \cite{JeLu22a}. 
The nonexistence result in \cref{thm:2ndsol-nonex} 
is proved by combining the non-local Pokhozaev-type identity with Young's and Sobolev's inequality. 
In this case, the non-local Pokhozaev-type identity for critical point $u$ of $\cE_g|_{S_m}$ with $\pm u \geq 0$ or $u(x) = u(\vab{x})$ is necessary. 
For this purpose, the decay estimate of critical points becomes important and this will be done in \cref{l:decay}. 
On the other hand, to find a critical point of $\cE_g|_{S_m}$ with positive energy, 
the mountain pass theorem for $\cE_g|_{S_m}$ is employed. 
For our problem, eliminating the vanishing of Palais-Smale sequence is crucial. 
To this end, we will use an augmented functional of $\cE_{g}$ based on scaling and 
this functional is utilized in other papers, for instance see \cite{Jeanjean,BMRV21,ikomatanaka,MoRiVe22}. 
Recall that the augmented functional usually plays a role to find a bounded Palais-Smale sequence, 
while our usage is different since this is exploited to exclude the vanishing of Palais-Smale sequence.

\section{Preliminaries}\label{sec2}
%%%%%%%%%%%%%%%%%%%%%%%%%%%%%%%%%%%%%%%%%%%%%%%%%%%%%%%%%%%%%%%%%%%%%%%%%%%%%%%%%%%%%
%%%%%%%%%%%%%%%%%%%%%%%%%%%%%%%%%%%%%%%%%%%%%%%%%%%%%%%%%%%%%%%%%%%%%%%%%%%%%%%%%%%%%

In this section, we prove several auxiliary properties of $g \mapsto e(g,m)$ and $m \mapsto e(g,m)$. 
For brevity, we denote 
\begin{equation*}
\mathcal{K}(u) \coloneq \frac{1}{2} \Vab{\nabla u}_2^2, 
\quad \mathcal{V}(u) \coloneq \frac{1}{4}\iint_{\mathbb{R}^d\times \mathbb{R}^d} \vab{u(x)}^2V(x-y)\vab{u(y)}^2 dx dy
\end{equation*}
throughout, with the short--hand for the functional:
\begin{equation*}
	\mathcal{E}_g(u)=\mathcal{K}(u)-g\mathcal{V}(u).
\end{equation*}
%In the first part of the proof, we prove several auxiliary statements concerning the functions $g\mapsto e(g,m)$ and $m\mapsto e(g,m)$.
%\iffalse
%We provide a short overview of proof to guide the reader:
%\begin{enumerate}
%\item Show that for all $g>0,m>0, e(g,m)>-\infty$.
%\item Next, show $\forall m>0, g\mapsto e(g,m)$ is monotone decreasing and continuous.
%\item $\forall d\in \lbrace 2,3\rbrace \quad   \forall m>0  \quad  \exists g^*_d(m)>0: \left( e(g,m)<0 \iff g>g_d^*(m)\right)$. 
%\item Suppose $g<g_d^*(m)$. Then no minimizer exists. 
%\item Suppose $g>g_d^*(m)$. Then there exists a minimizer. 
%\item Suppose $d=3$. Then a minimizer exists for $g=g^{*}_3(m)$. 
%\item Suppose $d=2$. Then a minimizer for $g=g^{*}_2(m)$ does not exist, assuming (B1)-(B3) in addition.
%\end{enumerate}
%In the next subsections, we carry out the demonstration in the above order. 
%\fi
Notice that $V \in L^1(\R)$ ($d=1$) and $V \in L^{d/2} (\R^d)$ ($d=2,3$) by \ref{A1}. 
Hence Young's inequality leads to 
\begin{equation}\label{basineqcV}
	4\vab{\cV(u)} \leq 
	\begin{dcases}
		\Vab{u^2}_2^2 \Vab{V}_1 \Vab{u^2}_2^2 = \Vab{u}_4^4 \Vab{V}_1 & \text{if $d=1$},\\
		\Vab{u^2}_{d/(d-1)} \Vab{V}_{d/2} \Vab{u^2}_{d/(d-1)} 
		= \Vab{V}_{d/2} \Vab{u}_{2d/(d-1)}^4 & \text{if $d =2,3$}.
	\end{dcases}
\end{equation}
In particular, $\cV \in C^1(H^1(\R^d), \R)$ and 
\[
\begin{aligned}
	\cV'(u) \varphi &= \frac{1}{2} \int_{\R^d \times \R^d} u(x) \varphi(x) V(x-y) u(y)^2 + u^2(x) V(x-y) u(y) \varphi(y) dxdy
	\\
	&= \int_{\R^d \times \R^d} u (x) V(x-y) u^2 (y) \varphi(x) dx dy \quad \text{if $V(x) = V(-x)$}.
\end{aligned}
\]
Moreover, if $\{u_j\}_j$ is bounded in $H^1(\R^d)$ and $\Vab{u_j}_q \to 0$ for some $q \in (2,2^*)$ 
where $2^* \coloneq \infty$ ($d=1,2$) and $2^* \coloneq 2d/(d-2)$ ($d=3$), then since $\Vab{u_j}_r \to 0$ for any $r \in (2,2^*)$ 
by the interpolation, \eqref{basineqcV} gives 
\begin{equation}\label{vanicV}
	\cV(u_j) \to 0 \quad \text{as $j \to \infty$}. 
\end{equation}

\iffalse
We note
\begin{proposition}
Let $f_n$ be a sequence of functions in $H^1(\mathbb{R}^d)$. Assume that $f_n\rightharpoonup f$ weakly in $H^1(\mathbb{R}^d)$ and $f_n\rightarrow f$ strongly in $L^2(\mathbb{R}^d)$. Then
\begin{equation}\label{Kn}
\liminf_{n\to \infty} K(f_n)\geq K(f)
\end{equation}
and, up to a subsequence
\begin{equation}\label{Vn}
\lim_{n\to \infty} V(f_n)\geq V(f).
\end{equation}
\end{proposition}
\begin{proof}
By the weak lower semicontinuity of norms
\begin{equation}
\liminf_{n\to \infty} \|f_n\|_{H^1}\geq  \|f\|_{H^1}
\end{equation}
which implies \eqref{Kn} as the $L^2$ norms converge by the strong convergence in $L^2$. From $\lbrace f_n\rbrace $
one can extract a subsequence which is dominated by an $L^2$ function $F$. Since $V\in L^{d/2}$, there is a set of finite measure $B$ s.t. $V$ is bounded on $\mathbb{R}^d\backslash B$. Write 
\begin{equation}
\begin{aligned}
4V(F)&=\iint_{x-y\in\mathbb{R}^d\backslash B  } |F(x)|^2v(x-y)|F(y)|^2 dx dy+\iint_{x-y\in B} |F(x)|^2v(x-y)|F(y)|^2 dx dy\\
&\leq \|V}\|_{\infty}(\mathbb{R}^d\backslash B ) \|F\|_2^4+\iint_{x-y\in B} |F(x)|^2v(x-y)|F(y)|^2 dx dy.
\end{aligned}
\end{equation}
In $d=3$, the second term is bounded by Young's inequality and $F\in L^p_loc$ for any $p\geq 2$. 
\end{proof}
\fi

%%%%%%%%%%%%%%%%%%%%%%%%%%%%%%%%%%%%%%%%%%%%%%%%%%%%%%%%%%%%%%%%%%%%%%%%%%%%%%%%%%%%%%
%\subsection{Preliminaries}
%%%%%%%%%%%%%%%%%%%%%%%%%%%%%%%%%%%%%%%%%%%%%%%%%%%%%%%%%%%%%%%%%%%%%%%%%%%%%%%%%%%%%%

\begin{proposition}\label{prop1}
	Under {\rm \ref{A1}}, $e(g,m)>-\infty$ holds for all $g\geq 0$ and $m>0$. 
	Moreover, for every $\{u_j\}_j \subset S_m$, if $\{\cE_g(u_j) \}_j$ is bounded, then $\{u_j\}_j$ is bounded in $H^1(\mathbb{R}^d)$. 
	As a consequence, all minimizing sequences for $e(g,m)$ are bounded in $H^1(\R^d)$. 
\end{proposition}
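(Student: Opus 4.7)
The strategy is to derive a single coercivity estimate of the form
\[
\cE_g(u) \geq c_0 \Vab{\nabla u}_2^2 - C_0 \qquad \text{for every } u \in S_m,
\]
with constants $c_0 = c_0(g,m,V) > 0$ and $C_0 = C_0(g,m,V) < \infty$. From this both conclusions are immediate: $e(g,m) \geq - C_0 > -\infty$, and any $\{u_j\}_j \subset S_m$ along which $\cE_g(u_j)$ stays bounded must have $\Vab{\nabla u_j}_2$ bounded, which together with $\Vab{u_j}_2^2 = m$ yields an $H^1(\R^d)$-bound.

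For $d=1$ the estimate is routine: the one-dimensional Gagliardo--Nirenberg inequality $\Vab{u}_4^4 \leq C \Vab{u}_2^3 \Vab{\nabla u}_2$ (a consequence of $\Vab{u}_\infty^2 \leq \Vab{u}_2 \Vab{\nabla u}_2$), together with \eqref{basineqcV} and Young's inequality, gives
\[
g\,\vab{\cV(u)} \leq \tfrac{g \Vab{V}_1 C m^{3/2}}{4} \Vab{\nabla u}_2 \leq \tfrac14 \Vab{\nabla u}_2^2 + C_0,
\]
so $\cE_g(u) \geq \tfrac14 \Vab{\nabla u}_2^2 - C_0$ as wanted.

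The case $d = 2, 3$ is the only delicate point. The exponent $r \coloneq 2d/(d-1)$ appearing in \eqref{basineqcV} is precisely \emph{critical} for Gagliardo--Nirenberg in the sense that $\Vab{u}_r^4 \leq C m \Vab{\nabla u}_2^2$ with no slack in the power of $\Vab{\nabla u}_2$, so one cannot hope to absorb $g \cV(u)$ into the kinetic term $\cK(u)$ for arbitrary $g$ and $m$. The resolution is precisely the splitting built into \ref{A1}. Fix $\e > 0$ to be chosen, write $V = V_{\e,1} + V_{\e,2}$ with $\Vab{V_{\e,2}}_{d/2} < \e$ and $V_{\e,1} \in L^{q_\e}$ for some $q_\e > d/2$, and split $\cV$ additively as well. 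The $V_{\e,2}$-contribution, estimated as in \eqref{basineqcV}, is at most $\e C m \Vab{\nabla u}_2^2 / 4$, which becomes harmless when $\e$ is small. For the $V_{\e,1}$-contribution, Young's inequality bounds the integral by $\Vab{V_{\e,1}}_{q_\e} \Vab{u}_s^4 / 4$ with $s = 4q_\e/(2q_\e - 1) \in (2, r)$ \emph{strictly} subcritical; the corresponding Gagliardo--Nirenberg exponent $\theta = d(\tfrac12 - \tfrac1s) \in (0, \tfrac12)$ then satisfies $4\theta < 2$, so one further application of Young's inequality absorbs this piece into $\delta \Vab{\nabla u}_2^2 + C_\delta$ for any preassigned $\delta > 0$. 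Picking $\e$ first and then $\delta$ small enough relative to $g$ and $m$ yields the coercivity bound with $c_0 = \tfrac14$.

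The only genuine obstacle is the criticality of the integrability exponent $d/2$ in dimensions $2$ and $3$, and it is sidestepped exactly by the smallness-plus-higher-integrability decomposition in \ref{A1}. Beyond this, every step is a routine application of Young's and Gagliardo--Nirenberg's inequalities, and I expect no further technical difficulty.
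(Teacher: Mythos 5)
Your proof is correct and follows essentially the same route as the paper: the one-dimensional Gagliardo--Nirenberg bound $\Vab{u}_4^4 \leq C\Vab{u}_2^3\Vab{\nabla u}_2$ for $d=1$, and for $d=2,3$ the decomposition $V = V_{\e,1}+V_{\e,2}$ from \ref{A1}, with the small-$L^{d/2}$ piece absorbed into the kinetic term and the $L^{q_\e}$ piece ($q_\e > d/2$) controlled by a strictly subquadratic power $\Vab{\nabla u}_2^{d/q_\e}$. The only (harmless) difference is that you apply one further Young inequality to package everything as $c_0\Vab{\nabla u}_2^2 - C_0$, whereas the paper stops at the lower bounds \eqref{LBEg-1d} and \eqref{LBEg} with a residual subquadratic term.
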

\begin{proof}
In what follows, let $u \in S_m$. 
Recall the Gagliardo--Nirenberg inequality: 
\begin{equation}\label{GN-ineq}
	\Vab{v}_p \leq C \Vab{\nabla v}_2^\theta \Vab{v}_2^{1-\theta} \quad 
	\text{for all $v \in H^1(\R^d)$ where $\frac{1}{p} = \theta \ab( \frac{1}{2} - \frac{1}{d} ) + \frac{1-\theta}{2}$. }
\end{equation}
When $d=1$, it follows from \eqref{basineqcV} and \eqref{GN-ineq} that 
\begin{equation*}
	\vab{\cV(u)} \leq C \Vab{V}_1 \Vab{u}_4^4 \leq C \Vab{V}_1 \Vab{\nabla u}_2 \Vab{u}_2^{3} = C m^{3/2} \Vab{V}_1 \Vab{\nabla u}_2.
\end{equation*}
Hence, for each $u \in S_m$, 
\begin{equation}\label{LBEg-1d}
	\cE_g(u) \geq \frac{1}{2} \Vab{\nabla u}_2^2 - C g m^{3/2} \Vab{V}_1 \Vab{\nabla u}_2. 
\end{equation}
Thus, $e(g,m) > -\infty$ holds and the other assertions also hold.

When $d=2,3$, for $\e \in (0,1)$ chosen later, by \ref{A1}, there exist $q_{\e} \in (d/2,\infty]$, $V_{\e,1} \in L^{q_\e} (\R^d)$ and $V_{\e,2} \in L^{d/2} (\R^d)$ such that 
\[
V = V_{\e,1} + V_{\e,2}, \quad \Vab{V_{\e,2}}_{d/2} < \e. 
\]
Decompose $\cV$ as 
\[
\cV(u) = \cV_{\e,1} (u) + \cV_{\e,2}(u), \quad 
\cV_{\e,i}(u) \coloneq \frac{1}{4} \iint_{\R^d \times \R^d} u^2(x) V_{\e,i} (x-y) u^2(y) dx dy.
\]
For $p=2d/(d-1)$, $\theta$ in \eqref{GN-ineq} fulfills $4\theta = 2$ and consequently \eqref{basineqcV} yields 
\begin{equation}\label{bdd-2ndterm}
	\vab{\cV_{\e,2} (u)} \leq \frac{1}{4} \Vab{V_{\e,2}}_{d/2} \Vab{u}_{2d/(d-1)}^4 \leq C \Vab{V_{\e,2}}_{d/2} \Vab{\nabla u}_2^2 \Vab{u}_2^{ 2 } 
	< C m \e \Vab{\nabla u}_2^2. 
\end{equation}
Fix $\e \in (0,1)$ so that $gC m \e < 1/4$. By $V_{1,\e} \in L^{q_\e} (\R^d)$ with $q_\e > d/2$, Young's inequality implies 
\[
4\vab{\cV_{\e,1}(u)} \leq \Vab{V_{\e,1}}_{q_\e} \Vab{u^2}^2_{ 2q_\e/(2q_\e-1) }
= \Vab{V_{\e,1}}_{q_\e} \Vab{u}_{4q_\e/(q_\e-1)}^4. 
\]
In \eqref{GN-ineq} with $p=4q_\e/(2 q_\e-1)$, we see that $4\theta = d/q_\e < 2$, and hence 
\begin{equation}\label{bdd-1stterm}
	\vab{\cV_{\e,1}(u)} \leq C \Vab{V_{\e,1}}_{q_\e} \Vab{\nabla u}_2^{ d/q_\e } \Vab{u}_2^{ 4 - d/q_\e } 
	= C \Vab{V_{\e,1}}_{q_\e} m^{ 2 - d/(2q_\e) } \Vab{\nabla u}_2^{d/q_\e}.
\end{equation}
By \eqref{bdd-2ndterm} and \eqref{bdd-1stterm}, the choice of $\e$ gives 
\begin{equation*}
	\vab{\cV(u)} \leq C \Vab{V_{\e,1}}_{q_\e} m^{ 2 - d/(2q_\e) } \Vab{\nabla u}_2^{d/q_\e}  + \frac{1}{4} \Vab{\nabla u}_2^2
	\quad \text{for all $u \in S_m$}. 
\end{equation*}
Since $d/q_\e < 2$, we conclude 
\begin{equation}\label{LBEg}
\cE_g(u) \geq \frac{1}{4} \Vab{\nabla u}_2^2 - C \Vab{V_{\e,1}}_{q_\e} m^{ 2 - d/(2q_\e) } \Vab{\nabla u}_2^{d/q_\e} \quad 
\text{for any $u \in S_m$}
\end{equation}
and the desired results hold. 
\end{proof}

\begin{lemma}\label{cont_mon}
	Let {\rm \ref{A1}} hold. Then for all $m>0$, $e(g,m) \leq 0$ and the function $ g\mapsto e(g,m)$ is non-increasing and continuous.
\end{lemma}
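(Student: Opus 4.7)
The plan is to verify the three claims in turn.

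For the bound $e(g,m)\le 0$, I would test on the dilation $u_\lambda(x)\coloneq \lambda^{d/2} u(\lambda x)$ with $u\in C_c^\infty(\R^d)$ and $\Vab{u}_2^2=m$. This preserves the $L^2$ norm, so $u_\lambda\in S_m$, and $\cK(u_\lambda)=\lambda^2\cK(u)\to 0$ as $\lambda\to 0^+$. For the potential term, $\Vab{u_\lambda}_p^4=\lambda^{2d(1-2/p)}\Vab{u}_p^4$, so taking $p=4$ for $d=1,2$ and $p=2d/(d-1)=3$ for $d=3$ (the exponents $2d(1-2/p)$ are then $1, 2, 2$ respectively, all strictly positive), the bound \eqref{basineqcV} yields $\cV(u_\lambda)\to 0$. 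Thus $\cE_g(u_\lambda)\to 0$, which gives $e(g,m)\le 0$.

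For monotonicity, fix $0\le g_1<g_2$. If $e(g_1,m)=0$, then $e(g_2,m)\le 0=e(g_1,m)$ by the previous step. Otherwise $e(g_1,m)<0$, which forces $g_1>0$ because $\cE_0=\cK\ge 0$ and hence $e(0,m)\ge 0$. For any $\e\in(0,-e(g_1,m))$ and any $u\in S_m$ with $\cE_{g_1}(u)\le e(g_1,m)+\e<0$, we get $g_1\cV(u)>\cK(u)\ge 0$, so $\cV(u)>0$ and
\[
\cE_{g_2}(u)=\cE_{g_1}(u)-(g_2-g_1)\cV(u)<\cE_{g_1}(u)\le e(g_1,m)+\e.
\]
Taking the infimum over such $u$ and then $\e\to 0^+$ gives $e(g_2,m)\le e(g_1,m)$.

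For continuity at a fixed $g_0\ge 0$, let $g_n\to g_0$. For the $\limsup$, pick $u\in S_m$ with $\cE_{g_0}(u)\le e(g_0,m)+\e$ and write $\cE_{g_n}(u)=\cE_{g_0}(u)+(g_0-g_n)\cV(u)$ to get $e(g_n,m)\le e(g_0,m)+\e+|g_n-g_0|\,|\cV(u)|$, so $\limsup_{n\to\infty} e(g_n,m)\le e(g_0,m)$ after $\e\to 0^+$. For the $\liminf$, take near-minimizers $u_n\in S_m$ with $\cE_{g_n}(u_n)\le e(g_n,m)+1/n\le 1$ (using part~(a)). Since $\{g_n\}$ is bounded by some $G$, the coercivity estimates \eqref{LBEg-1d} and \eqref{LBEg} from the proof of \cref{prop1} can be made uniform in $g\in[0,G]$ by choosing the splitting $V=V_{\e,1}+V_{\e,2}$ once, for $g=G$; this yields $\Vab{\nabla u_n}_2\le C$ and hence $|\cV(u_n)|\le C$. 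Then $e(g_0,m)\le \cE_{g_0}(u_n)=\cE_{g_n}(u_n)+(g_n-g_0)\cV(u_n)\le e(g_n,m)+1/n+C|g_n-g_0|$ gives $e(g_0,m)\le \liminf_{n\to\infty} e(g_n,m)$. The only step that requires some care is precisely this uniformity of the coercivity constant as $g$ ranges in a compact set; the other pieces are routine manipulations with scaling and with the definition of the infimum.
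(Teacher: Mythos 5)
Your proposal is correct and follows essentially the same route as the paper: the scaling $u_\lambda=\lambda^{d/2}u(\lambda\cdot)$ for $e(g,m)\le 0$, positivity of $\cV$ at near-minimizers of negative energy for monotonicity, and near-minimizers plus the coercivity bounds of \cref{prop1} for continuity (your two-sided $\liminf$/$\limsup$ argument merely streamlines the paper's separate left/right continuity steps). Your explicit remark that the splitting $V=V_{\e,1}+V_{\e,2}$ must be fixed once for $g$ in a compact set, so that the coercivity constant is uniform, is a point the paper glosses over and is handled correctly.
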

\begin{proof}
We first show that for all $g\geq 0$ and $m>0$, it holds $e(g,m)\leq 0$. 
Let $\varphi \in C^\infty_c(\R^d)$ with $\Vab{\varphi}_2^2 = m$ and for $t>0$, consider $\varphi_t \coloneq t^{d/2} \varphi(t\cdot)$. 
Notice that $\Vab{\varphi_t}_2^2 = \Vab{\varphi}_2^2 = m$ and write $V_t \coloneq V(\cdot/ t)$. 
Then
\begin{equation}\label{est-egm1}
	\begin{aligned}
		e(g,m) \leq \mathcal{E}_{g}(\varphi_t) 
		&= t^2 \cK(\varphi)-\frac{g}{4} \iint_{\R^d \times \R^d} \vab{\varphi(x)}^2 V_t(x-y) \vab{\varphi(y)}^2 dx dy
	\end{aligned}
\end{equation}
and \eqref{basineqcV} yields 
\[
\iint_{\R^d \times \R^d} \vab{\varphi (x)}^2 \vab{ V_{t} (x-y) } \vab{\varphi(y)}^2 dx dy 
\leq 
\begin{dcases}
	\Vab{V_t}_1 \Vab{u}_4^4 & \text{if $d=1$}, \\
	\Vab{V_{t}}_{d/2} \Vab{\varphi}_{2d/(d-1)}^4 & \text{if $d=2,3$}.\\
\end{dcases}
\]
By $\Vab{V_t}_1 \to 0$ ($d=1$) and $\Vab{V_t}_{d/2} \to 0$ ($d=2,3$) as $t \to 0^+$, 
it follows from \eqref{est-egm1} that 
$e(g,m) \leq \lim_{t \to 0^+} \cE_g(\varphi_t) = 0$.

Now let $m>0$ and $g_2\geq g_1$. Let $\lbrace u^{(g_1)}_i\rbrace_i \subset S_m$ be a minimizing sequence for $\mathcal{E}_{g_1}$. 
Note that since $e(g_1,m)\leq 0$, we have $\liminf_{i\rightarrow \infty} \mathcal{V}(u_i^{(g_1)})\geq 0$ and thus
\begin{equation*}
e(g_2,m)\leq \limsup_{i\rightarrow \infty} \mathcal{E}_{g_2}(u_i^{(g_1)}) 
=\limsup_{i\rightarrow \infty}\left(\mathcal{E}_{g_1}(u_i^{(g_1)})-(g_2-g_1)\mathcal{V}(u_i^{g_1})\right)\leq e(g_1,m),
\end{equation*}
so that $g\mapsto e(g,m)$ is non-increasing.

Next, we show the left continuity of $g\mapsto e(g,m)$. Let $g_0> 0$ be given.  
For any $\varepsilon>0$ we can find a function $\varphi_{\varepsilon} \in S_m$ such that $\mathcal{E}_{g_0}(\varphi_{\varepsilon})\leq e(g_0,m)+\varepsilon$.  
Therefore, $\lim_{g\rightarrow g_0^-} \mathcal{E}_g(\varphi_{\varepsilon})= \cE_{g_0} (\varphi_\e) \leq e(g_0,m)+\varepsilon$. 
The monotonicity of $e(g,m)$ in $g$ implies  
\begin{equation*}
e(g_0,m) \leq \lim_{g\rightarrow g_0^-} e(g,m) \leq \lim_{g \to g^-_0} \cE_g(\varphi_\e) \leq e(g_0,m)+\varepsilon.
\end{equation*}
Since $\e>0$ is arbitrary, $\lim_{g\rightarrow g_0^-} e(g,m)= e(g_0,m)$ holds.

To show the right continuity, consider any sequence $\lbrace g_n \rbrace_{n=1}^{\infty}$ with $g_n\geq g_0$  and $\lim_{n\rightarrow\infty}g_n =g_0$. For any $n$, we can choose $u_n\in S_m$ such that 
\begin{equation}\label{Egnun}
e(g_n,m)\leq \cE_{g_n} (u_n) \leq e(g_n,m)+\frac{1}{n} \leq \frac{1}{n}. 
\end{equation}
Using the bounds \eqref{LBEg-1d} and \eqref{LBEg}, we see that $\lbrace u_n \rbrace_{n=1}^\infty$ is bounded in $H^1(\R^d)$. 
By \eqref{basineqcV}, $\lbrace \mathcal{V}(u_n)\rbrace_{n=1}^{\infty}$ is also bounded, and $\lim_{n\rightarrow\infty}(g_n-g_0)\mathcal{V}(u_n)=0$. 
Hence, the monotonicity of $e(g,m)$ and \eqref{Egnun} give 
\begin{equation*}
\begin{aligned}
\lim_{n\rightarrow \infty} e(g_n,m) &\leq  e(g_0,m)
\leq \liminf_{n\rightarrow \infty} \mathcal{E}_{g_0}(u_n)= \liminf_{n\rightarrow \infty} \ab(\mathcal{E}_{g_n}(u_n)
+(g_n-g_0) \mathcal{V}(u_n) )\\ 
&\leq \liminf_{n\rightarrow \infty} \ab(e(g_n,m)+\frac{1}{n}+(g_n-g_0) \mathcal{V}(u_n) )=\lim_{n\rightarrow \infty} e(g_n,m), 
\end{aligned}
\end{equation*}
which demonstrates the right continuity and concludes the proof of the Lemma.
\end{proof}

\begin{lemma}\label{l:gstar}
	Suppose {\rm \ref{A1}} and {\rm \ref{A2}}. 
	Then for every $m>0$, there exists $g^*_d(m) \geq 0$ such that 
	$e(g,m) < 0$ is equivalent to $g>g^*_d(m)$. 
	In addition, if $d=1$ and $V \geq 0$, then $g^*_d(m) = 0$, 
	while $g^*_d(m) > 0$ holds provided $d=2,3$. 
	Moreover, for all $m>0$ whenever $g<g^{*}_d(m)$, no minimizer for $\mathcal{E}_g$ with $\|u\|_2^2=m$ exists. 
\end{lemma}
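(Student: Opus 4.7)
My plan is to set $g^*_d(m) \coloneq \inf A_m$ where $A_m \coloneq \Set{ g \geq 0 | e(g,m) < 0 }$ and then verify each listed property in turn. To see that $A_m$ is non-empty (so $g^*_d(m) < \infty$), I would use \ref{A2}: take $\varphi \in C^\infty_c(B(0,r_0/2))$ with $\Vab{\varphi}_2^2 = m$, so that for $x,y$ in the support of $\varphi$ one has $\vab{x-y}<r_0$ and hence $V(x-y) \geq V_0 > 0$, giving $\cV(\varphi) \geq V_0 m^2/4 > 0$. Consequently $\cE_g(\varphi) < 0$ for $g$ sufficiently large, and $A_m$ is non-empty. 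Monotonicity of $g \mapsto e(g,m)$ from \cref{cont_mon} makes $A_m$ upward closed, and continuity of $g \mapsto e(g,m)$ excludes $g^*_d(m) \in A_m$ (otherwise $e(\cdot,m)$ would be negative on a full neighbourhood of $g^*_d(m)$, contradicting the infimum). Therefore $A_m = (g^*_d(m),\infty)$, which is the claimed equivalence; in particular, combined with $e(\cdot,m) \leq 0$ from \cref{cont_mon}, we get $e(g,m) = 0$ for every $g \in [0,g^*_d(m)]$.

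For the sign of $g^*_d(m)$ the two regimes split cleanly. In the case $d=2,3$, combining \eqref{basineqcV} with the Gagliardo--Nirenberg inequality \eqref{GN-ineq} at $p = 2d/(d-1)$ (for which $\theta = 1/2$) gives $\Vab{u}_{2d/(d-1)}^4 \leq C \Vab{\nabla u}_2^2 \Vab{u}_2^2$, so for every $u \in S_m$
\begin{equation*}
	\vab{\cV(u)} \leq \tfrac{1}{4} \Vab{V}_{d/2} \Vab{u}_{2d/(d-1)}^4 \leq \tfrac{C}{4} m \Vab{V}_{d/2} \Vab{\nabla u}_2^2 ,
\end{equation*}
whence $\cE_g(u) \geq \Vab{\nabla u}_2^2 ( 1/2 - gCm\Vab{V}_{d/2}/4 )$. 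For $g$ small enough the right-hand side is non-negative, so $e(g,m) \geq 0$, which forces $g^*_d(m) > 0$. In the case $d=1$ with $V \geq 0$, I would use a spreading rescaling: fix any $\varphi \in C^\infty_c(\R)$ with $\Vab{\varphi}_2^2 = m$, and set $\varphi_\lambda(x) \coloneq \lambda^{-1/2} \varphi(x/\lambda) \in S_m$. A change of variables gives
\begin{equation*}
	\cV(\varphi_\lambda) = \frac{1}{4} \iint_{\R \times \R} \varphi(u)^2 V(\lambda(u-v)) \varphi(v)^2 \,du\, dv ,
\end{equation*}
and substituting $s = \lambda(u-v)$ combined with dominated convergence yields $\lambda\, \cV(\varphi_\lambda) \to \Vab{\varphi}_4^4 \Vab{V}_1 / 4$ as $\lambda \to \infty$; this limit is strictly positive since \ref{A2} forces $\Vab{V}_1 > 0$. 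Because $\cK(\varphi_\lambda) = \lambda^{-2}\cK(\varphi)$, the interaction term dominates for large $\lambda$, so $\cE_g(\varphi_\lambda) < 0$ for every $g > 0$ and $\lambda$ large. Hence $A_m = (0,\infty)$ and $g^*_1(m) = 0$.

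It remains to exclude minimizers when $g < g^*_d(m)$ (vacuously true when $g^*_d(m) = 0$). In this range we already know $e(g,m) = 0$. If a minimizer $\bar u \in S_m$ existed, then $\cE_g(\bar u) = 0$, i.e.\ $\cK(\bar u) = g\, \cV(\bar u)$. Since $\bar u \in H^1(\R^d)$ with $\Vab{\bar u}_2^2 = m > 0$ cannot be constant, $\cK(\bar u) > 0$, which forces $g > 0$ and $\cV(\bar u) > 0$. Then, for every $g' > g$,
\begin{equation*}
	\cE_{g'}(\bar u) = \cE_g(\bar u) - (g'-g)\cV(\bar u) = -(g'-g)\cV(\bar u) < 0 ,
\end{equation*}
so $e(g',m) < 0$ for all $g' > g$ and hence $g^*_d(m) \leq g$, contradicting $g < g^*_d(m)$.

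The main technical point I foresee is the $d=1$ scaling limit: verifying $\lambda\,\cV(\varphi_\lambda) \to \Vab{\varphi}_4^4 \Vab{V}_1/4$ requires a careful dominated convergence argument, since $V$ is only assumed integrable and need not be continuous. The remaining steps are essentially bookkeeping on top of \cref{prop1,cont_mon}.
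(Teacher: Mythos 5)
Your proposal is correct and follows essentially the same route as the paper: \ref{A2} with a test function supported in $B(0,r_0/2)$ to make $e(g,m)<0$ for large $g$, monotonicity and continuity from \cref{cont_mon} to produce the threshold, the bound $\cV(u)\leq Cm\,\cK(u)$ via Young and Gagliardo--Nirenberg at $p=2d/(d-1)$ for positivity of $g^*_d(m)$ when $d=2,3$, the spreading rescaling for $d=1$, and the observation that a minimizer at level $e(g,m)\leq 0$ has $\cV>0$ and hence forces $e(g',m)<0$ for all $g'>g$. The only cosmetic difference is in the $d=1$ limit: you normalize and use dominated convergence to get $\lambda\,\cV(\varphi_\lambda)\to\Vab{\varphi}_4^4\Vab{V}_1/4$ for an arbitrary $\varphi\in C^\infty_c$, whereas the paper takes a symmetric decreasing $\varphi$ and lets the unnormalized interaction integral diverge by monotone convergence; both are valid.
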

\begin{proof}
	We first show that $e(g,m)<0$ for sufficiently large $g>0$. Let $r_0$ be given in \ref{A2}, and take $\varphi\in C_c^\infty(\mathbb{R}^d) \cap S_m$ 
	with $\supp \varphi\subset B_{r_0/2}(0)$. Since $x-y\in  B_{r_0}(0)$ for all $x,y \in  B_{r_0/2}(0)$, we have 
	\begin{equation*}
		e(g,m) \leq \mathcal{K}(\varphi)-\frac{g}{4}\iint_{ B_{r_0/2}(0)\times  B_{r_0/2}(0)} \vab{\varphi(x)}^2 V(x-y) \vab{\varphi(y)}^2 dx dy\leq K(\varphi)-\frac{g}{4} m^2 V_0,
	\end{equation*}
	where we use \ref{A2}: $\inf_{r\in  B_{r_0}(0)}V=V_0>0$. Clearly, one can make the above expression strictly negative 
	provided $g$ is sufficiently large. 
	Thus there exists $g_0 > 0$ such that $e(g,m)<0$ for every $g>g_0$, 
	and by continuity and the monotonicity in $g$, 
	we conclude the existence of $g^*_d(m)\in [0,\infty)$ such that $e(g,m)=0$ for all $g\leq g_d^*(m)$ and $e(g,m)<0$ for all $g> g_d^*(m)$.

	Our next aim is to prove $g^*_d(m) = 0$ when $d=1$ and $V \geq 0$. 
	Let $g > 0$ be arbitrary and fix $\varphi \in C^\infty_c(\R)$ so that $\varphi(-x) = \varphi(x) \geq 0$, $\varphi' \leq 0$ in $(0,\infty)$ and $\Vab{\varphi}_2^2 = m$. 
	Notice that $\varphi(0) > 0$ holds.  
	For $t \in (0,1)$, consider $\varphi_t(x) := t^{1/2} \varphi(t x)$. Then for any $g>0$, 
	\[
	e(g,m) \leq \cE_g(\varphi_t) = \frac{t^2}{2} \Vab{\varphi'}_2^2 - \frac{g}{4} t^2 \iint_{\R \times \R} \varphi^2 (t x) V(x-y) \varphi^2 (t y) dxdy.
	\]
	Since $V \geq 0$ and $x \varphi'(x) \geq 0$, the monotone convergence theorem and \ref{A2} imply 
	\[
	\lim_{t \to 0^+} \iint_{\R \times \R} \varphi^2 (t x) V(x-y) \varphi^2 (t y) dxdy = \varphi^4(0) \iint_{\R \times \R} V(x-y) dx dy = \infty. 
	\]
	Therefore, for a sufficiently small $t>0$, $e(g,m) \leq \cE_g(\varphi_t) < 0$ holds. 
	Since $g>0$ is arbitrary, we have $g_d^*(m) = 0$.

Next, when $d=2,3$, we prove that $g^*_d(m) > 0$ holds. 
By $V \in L^{d/2} (\R^d)$, as in the proof of \cref{prop1} (\eqref{bdd-2ndterm} by replacing $V_{\e,2}$ with $V$), 
there exists $C>0$ such that 
\begin{equation*}\label{SOB}
\mathcal{V}(u)\leq Cm \mathcal{K}(u) \quad \text{for all $u \in S_m$}. 
\end{equation*}
Hence, if $g<\frac{1}{Cm}$, then $\cE_g(u) \geq 0$ for all $u\in S_m$, and the fact $e(g,m) \leq 0$ yields $e(g,m)=0$. 
%By monotonicity, we conclude the existence of $g^*_d(m)\in(0,\infty]$  such that $e(g,m)=0$ $ for all $g<g_d^*(m)$.
Thus, $e(g,m)=0$ for all sufficiently small $g$.

It remains to show that  for all $m>0$ and $g \in (0, g^{*}_d(m) )$, no minimizer for $\mathcal{E}_g$ with $\Vab{u}_2^2=m$ exists. 
To this end, it suffices to prove that if a minimizer for $\mathcal{E}_g$ with $g>0$ and $\Vab{u}_2^2=m$ exists, 
then $e(g',m)<0$ holds for all $g'>g$. Suppose that there is a function $u^g\in S_m$ satisfying $e(g,m)=\mathcal{E}_g(u^g)=\mathcal{K}(u^g)-g\mathcal{V}(u^g)$. 
By $\mathcal{K}(u^g)>0$ and $e(g,m)\leq0$, we have $\mathcal{V}(u^g)>0$, and hence 
$e(g',m)\leq \mathcal{K}(u^g)-g'\mathcal{V}(u^g)=e(g,m)+(g-g')\mathcal{V}(u^g)<0$. 
\end{proof}

We prove two last auxiliary results concerning the properties of $e(g,m)$ when $g$ is fixed and $m$ varies. 

\begin{lemma}\label{sad}
	Suppose {\rm \ref{A1}}. Then for all $g>0$, 
	the function $m\mapsto e(g,m)$ is non-increasing and subadditive, i.e. $e(g,m_1+m_2)\leq e(g,m_1)+e(g,m_2)$ for all $m_1, m_2>0$.
\end{lemma}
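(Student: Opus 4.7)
The plan is to derive the monotonicity directly from subadditivity together with the inequality $e(g,m) \leq 0$ proved in \cref{cont_mon}: if $m_1 < m_2$ and subadditivity holds, then
\begin{equation*}
e(g,m_2) = e\bigl(g, m_1 + (m_2-m_1)\bigr) \leq e(g,m_1) + e(g, m_2 - m_1) \leq e(g,m_1),
\end{equation*}
which is the non-increasing property. It therefore suffices to prove $e(g,m_1+m_2) \leq e(g,m_1) + e(g,m_2)$.

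To prove subadditivity I will exploit the translation invariance of $\cE_g$ by placing two near-minimizers far apart. Fix $\varepsilon>0$ and, by density of $C_c^\infty(\R^d)$ in $H^1(\R^d)$ combined with the continuity of $\cE_g$ (which follows from \eqref{basineqcV}), choose real-valued $u_i \in C_c^\infty(\R^d) \cap S_{m_i}$ such that $\cE_g(u_i) \leq e(g,m_i) + \varepsilon$ for $i=1,2$. For $R > 0$ and a fixed unit vector $e_1 \in \R^d$, set $v_R(x) := u_1(x) + u_2(x - Re_1)$. Once $R$ exceeds the sum of the diameters of $\supp u_1$ and $\supp u_2$, these supports are disjoint, whence $v_R \in S_{m_1+m_2}$ and $\cK(v_R) = \cK(u_1) + \cK(u_2)$ with no cross contribution. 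Since the value of $\cE_g$ is unchanged if $V$ is replaced by its symmetrization $\tfrac{1}{2}[V(x)+V(-x)]$, one may assume $V$ is even; expansion of the non-local term then gives
\begin{equation*}
\cV(v_R) = \cV(u_1) + \cV(u_2) + \frac{1}{2} I_R, \qquad I_R := \iint u_1^2(x) \, V(x-y) \, u_2^2(y-Re_1) \, dx \, dy.
\end{equation*}
Consequently $\cE_g(v_R) = \cE_g(u_1) + \cE_g(u_2) - \tfrac{g}{2} I_R$, and once $I_R \to 0$ as $R \to \infty$ is established, $e(g,m_1+m_2) \leq \cE_g(v_R) \to \cE_g(u_1) + \cE_g(u_2) \leq e(g,m_1) + e(g,m_2) + 2\varepsilon$; letting $\varepsilon \to 0$ closes the argument.

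The single technical step is therefore the vanishing of $I_R$. After substituting $y \mapsto y + Re_1$,
\begin{equation*}
I_R = \iint u_1^2(x) \, u_2^2(y) \, V(x - y - Re_1) \, dx \, dy,
\end{equation*}
and since $u_1, u_2 \in C_c^\infty$ there exists $C>0$ such that $|x - y - Re_1| \geq R - C$ throughout the effective domain of integration. Splitting $V = V \mathbf{1}_{B_{R-C}} + V \mathbf{1}_{\R^d \setminus B_{R-C}}$ kills the first piece entirely, while for the second a Hölder estimate of the type in \eqref{basineqcV} gives
\begin{equation*}
|I_R| \leq \Vab{V \mathbf{1}_{\R^d \setminus B_{R-C}}}_{d/2} \Vab{u_1}_{2d/(d-1)}^2 \Vab{u_2}_{2d/(d-1)}^2 \quad (d=2,3),
\end{equation*}
with the obvious $L^1$-based counterpart in $d=1$. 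The tail-norm of $V$ on the right tends to $0$ as $R \to \infty$ because $V \in L^{d/2}(\R^d)$ (resp.\ $V \in L^1(\R)$) by \ref{A1}. This tail estimate is the only delicate point of the proof, and it is a routine consequence of \ref{A1}; I do not anticipate any genuine obstacle.
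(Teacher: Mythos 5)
Your proposal is correct and follows essentially the same route as the paper: deduce monotonicity from subadditivity together with $e(g,m)\leq 0$, then prove subadditivity by gluing two compactly supported near-minimizers translated far apart, with the cross term in $\cV$ killed by the tail estimate $\Vab{\chi_{\R^d\setminus B_{R}}V}_{\max\{1,d/2\}}\to 0$ via Young's inequality. The only cosmetic differences are that you make the density/$\varepsilon$-approximation step and the symmetrization of $V$ explicit, whereas the paper takes the limit for arbitrary test functions and then infimizes.
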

\begin{proof}
Note that if the subadditivity holds, then $m\mapsto e(g,m)$ is indeed non-increasing due to $e(g,m)\leq 0$. 
To prove the subadditivity, choose $\varphi_1, \varphi_2\in C_c^\infty(\mathbb{R}^d)$ with $\Vab{\varphi_1}_2^2=m_1, \Vab{\varphi_2}_2^2=m_2$ 
and consider, for $t>0$, $\psi_t \coloneq \varphi_1+\varphi_2(\cdot+te)$, where $e$ denotes a fixed unit vector in $\mathbb{R}^d$. For all sufficiently large $t$, the supports of $\varphi_1$ and $\varphi_2(\cdot+te)$ are disjoint, so that $\Vab{\psi_t}^2_2=m_1+m_2$ 
and $\cK(\psi_t)=\cK(\varphi_1)+\cK(\varphi_2)$. 
On the other hand, since $\varphi_i \in C^\infty_c(\R^d)$, there exists $R_t>0$ such that $R_t \to \infty$ as $t \to \infty$ and 
\[
\begin{aligned}
	&\iint_{\R^d \times \R^d} \vab{\varphi_1(x)}^2 V \ab( \pm \ab(  x-y + t e ) ) \vab{\varphi_2(y)}^2 dx dy  
	\\
	= \ & 
	\iint_{\R^d \times \R^d} \vab{\varphi_1(x)}^2 \ab( \chi_{ \R^d \setminus B_{R_t} (0) } V)  \ab( \pm \ab(   x-y+te ) ) \vab{\varphi_2(y)}^2 dx dy.
\end{aligned}
\]
Since $\| \chi_{\R^d \setminus B_{R_t}(0)} V \|_{ 1 } \to 0$ ($d=1$) and $\| \chi_{\R^d \setminus B_{R_t}(0)} V \|_{ d/2 } \to 0$ ($d=2,3$) as $t \to \infty$, 
\eqref{basineqcV} leads to 
\[
\iint_{\R^d \times \R^d} \vab{\varphi_1(x)}^2 V \ab( \pm  \ab( x-y + t e ) ) \vab{\varphi_2(y)}^2 dx dy  \to 0 \quad \text{as $t \to \infty$},
\]
which implies that as $t \to \infty$
\begin{equation*}
	\begin{aligned}
		&\mathcal{V}(\psi_t)-\mathcal{V}(\varphi_1)-\mathcal{V}(\varphi_2)
		\\
		= \ & 
		\iint_{\R^d \times \R^d} 
		\vab{\varphi_1(x)}^2  
		\Bab{ \vab{V(x-y+te)} + \vab{ V \ab( - (x-y+te ) ) } }  
		\vab{\varphi_2(y)}^2 dx dy \to 0. 
	\end{aligned}
\end{equation*}
Thus, for each $\varphi_1 \in C^\infty_c(\R^d) \cap S_{m_1}$ and $\varphi_2 \in C^\infty_c(\R^d) \cap S_{m_2}$,
\[
e(g,m_1+m_2) \leq \lim_{t \to \infty} \cE_g(\psi_t) = \cE_g(\varphi_1) + \cE_g(\varphi_2),
\]
which gives $e(g,m_1+m_2)\leq e(g,m_1)+e(g,m_2)$ as claimed. 
\end{proof}

\begin{lemma}\label{sad2}
	Assume {\rm \ref{A1}} and {\rm \ref{A2}}, and let $m>0$ and  $g>g_d^*(m)$. 
	Then for any $t>1$, $e(g,tm)< t^2 e(g,m)$ holds. 
\end{lemma}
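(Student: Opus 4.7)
The plan is to test the infimum $e(g,tm)$ using a simple $L^2$--rescaling of a minimizing sequence for $e(g,m)$. Since $g > g_d^*(m)$, \cref{l:gstar} gives $e(g,m)<0$. I would fix a minimizing sequence $\{u_j\}_j \subset S_m$ for $e(g,m)$ and set $v_j \coloneq \sqrt{t}\, u_j$. Then $\Vab{v_j}_2^2 = t\Vab{u_j}_2^2 = tm$, so $v_j \in S_{tm}$, and using that $\cK$ is $L^2$--homogeneous of degree $1$ while $\cV$ is $L^2$--homogeneous of degree $2$ (no spatial rescaling is involved), one gets the identity
\begin{equation*}
\cE_g(v_j) = t\,\cK(u_j) - g t^2\, \cV(u_j) = t^2\, \cE_g(u_j) - t(t-1)\,\cK(u_j).
\end{equation*}

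Since $e(g,tm) \leq \cE_g(v_j)$ for every $j$, passing to limsup already yields $e(g,tm) \leq t^2 e(g,m) - t(t-1)\liminf_j \cK(u_j)$. The only nontrivial step is therefore to show $\kappa \coloneq \liminf_j \cK(u_j) > 0$. This is where I expect the main (though still mild) obstacle to lie, and I would address it as follows: if $\cK(u_{j_k}) \to 0$ along a subsequence, then from \eqref{basineqcV} combined with Gagliardo--Nirenberg (exactly as in the proof of \cref{prop1}), one has $\vab{\cV(u)} \leq C(m)\,\cK(u)^\alpha$ with $\alpha = 1$ for $d=2,3$ and $\alpha = 1/2$ for $d=1$; this forces $\cV(u_{j_k}) \to 0$ and hence $\cE_g(u_{j_k}) \to 0$, contradicting $e(g,m) < 0$.

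Once $\kappa > 0$ is secured, the conclusion is immediate:
\begin{equation*}
e(g,tm) \;\leq\; t^2 e(g,m) - t(t-1)\,\kappa \;<\; t^2 e(g,m),
\end{equation*}
using $t>1$. In short, the whole content of the lemma is the elementary scaling observation that $\cE_g(\sqrt{t}\,u) = t^2 \cE_g(u) - t(t-1)\cK(u)$; the rôle of the hypothesis $g>g_d^*(m)$ is only to ensure $e(g,m)<0$, which in turn prevents the kinetic energy of a minimizing sequence from collapsing to zero and thereby guarantees the strict drop $-t(t-1)\kappa < 0$.
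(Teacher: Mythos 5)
Your argument is correct and is essentially the paper's own proof: test $e(g,tm)$ with $\sqrt{t}\,u_j$ for a minimizing sequence $\{u_j\}\subset S_m$, use the identity $\cE_g(\sqrt{t}\,u)=t^2\cE_g(u)-t(t-1)\cK(u)$, and rule out $\cK(u_j)\to 0$ because that would force $\cV(u_j)\to 0$ and hence $\cE_g(u_j)\to 0$, contradicting $e(g,m)<0$. The only cosmetic difference is that the paper deduces $\cV(u_j)\to 0$ via $\Vab{u_j}_q\to 0$ and \eqref{vanicV}, whereas you invoke the direct bound $\vab{\cV(u)}\leq C(m)\,\cK(u)^{\alpha}$ from the proof of \cref{prop1}; both are valid.
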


\begin{proof}
Let $\lbrace u_n\rbrace \subset S_m$ be a minimizing sequence for $\mathcal{E}_g$. Then for any $t>1$ we have 
\begin{equation*}
	\begin{aligned}
		e(g,tm)\leq \mathcal{E}_g(\sqrt{t}u_n) 
		=t\mathcal{K}(u_n)-gt^2 \mathcal{V}(u_n) 
		&= t^2\cE_g(u_n) + t(1-t)\mathcal{K}(u_n)
		\\
		&=t^2 e(g,m) + t(1-t) \cK(u_n) + c_n,
	\end{aligned}
\end{equation*}
where $c_n \to 0$ as $n \to \infty$. 
Note that $\lim_n \mathcal{K}(u_n)>0$ holds; otherwise, $\Vab{u_n}_q \to 0$ for any $q \in (2,2^*)$, 
$\cV(u_n) \to 0$ in view of \eqref{vanicV} and the following contradiction happens: 
$0 > e(g,m) = \lim_{n \to \infty} \cE_g(u_n) \geq 0$. 
Thus, in the limit $n\rightarrow 0$ we obtain $e(g,tm)< t^2 e(g,m) $ for any $t>1$.
\end{proof}

%%%%%%%%%%%%%%%%%%%%%%%%%%%%%%%%%%%%%%%%%%%%%%%%%%%%%%%%%%%%%%%%%%%%%%%%%%%%%%%%%%%%%
%%%%%%%%%%%%%%%%%%%%%%%%%%%%%%%%%%%%%%%%%%%%%%%%%%%%%%%%%%%%%%%%%%%%%%%%%%%%%%%%%%%%%
\section{Existence of minimizers for $g>g_d^*(m)$}
\label{sec3}
%%%%%%%%%%%%%%%%%%%%%%%%%%%%%%%%%%%%%%%%%%%%%%%%%%%%%%%%%%%%%%%%%%%%%%%%%%%%%%%%%%%%%
%%%%%%%%%%%%%%%%%%%%%%%%%%%%%%%%%%%%%%%%%%%%%%%%%%%%%%%%%%%%%%%%%%%%%%%%%%%%%%%%%%%%%

\begin{theorem}\label{T:Exming>g^*}
	Assume {\rm \ref{A1}} and {\rm \ref{A2}}, and let $m>0$ and $g>g_d^*(m)$. 
	Then up to translations and subsequences, 
	any minimizing sequence for $e(g,m)$ is relatively compact in $H^1(\R^d)$. 
	In particular, there exists $u \in S_m$ such that $\cE_g(u) = e(g,m) < 0$.  
\end{theorem}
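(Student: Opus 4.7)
Fix $g>g_d^*(m)$, so that by \cref{l:gstar} we have $e(g,m)<0$. Let $\{u_n\}\subset S_m$ be a minimizing sequence. By \cref{prop1}, $\{u_n\}$ is bounded in $H^1(\R^d)$, so we may apply the concentration--compactness principle of Lions to the normalized densities $\{|u_n|^2/m\}$: along a subsequence, exactly one of vanishing, dichotomy, or tightness-up-to-translation holds. The plan is to exclude the first two and to use \eqref{basineqcV} together with weak lower semicontinuity to conclude in the third.

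\textbf{Vanishing is impossible.} If $\sup_{y\in\R^d}\int_{B_R(y)}|u_n|^2\,dx \to 0$ for every $R>0$, then by the standard vanishing lemma $\|u_n\|_q \to 0$ for some (hence every, by interpolation) $q\in(2,2^*)$. Then \eqref{vanicV} yields $\cV(u_n)\to 0$, so
\[
e(g,m)=\lim_{n\to\infty}\cE_g(u_n)=\lim_{n\to\infty}\bigl(\cK(u_n)-g\cV(u_n)\bigr)\geq 0,
\]
contradicting $e(g,m)<0$.

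\textbf{Dichotomy is impossible: strict subadditivity.} The key point is to upgrade the subadditivity of \cref{sad} to strict inequality: for all $m_1,m_2>0$ with $m_1+m_2=m$,
\begin{equation}\label{eq:strict-sub}
e(g,m)<e(g,m_1)+e(g,m_2).
\end{equation}
Since $e(g,m)<0$, by \cref{cont_mon} each $e(g,m_i)\leq 0$. If $e(g,m_i)<0$ for $i=1,2$, then applying \cref{sad2} with $t_i=m/m_i>1$ shows that the map $m'\mapsto e(g,m')/m'$ is strictly decreasing on the set where it is negative; in particular $e(g,m_i)/m_i>e(g,m)/m$, and summing after multiplying by $m_i$ gives \eqref{eq:strict-sub}. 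If, say, $e(g,m_2)=0$, then with $t=m/m_1>1$ \cref{sad2} gives $e(g,m)<t^2 e(g,m_1)\leq e(g,m_1)=e(g,m_1)+e(g,m_2)$. Given \eqref{eq:strict-sub}, dichotomy leads to a contradiction in the usual way: splitting $u_n\approx v_n+w_n$ with $\dist(\supp v_n,\supp w_n)\to\infty$, $\|v_n\|_2^2\to m_1$, $\|w_n\|_2^2\to m_2$, the cross terms in $\cV(u_n)$ vanish in the limit because $\|\chi_{\R^d\setminus B_R(0)}V\|_{d/2}\to 0$ (resp.\ $\|\cdot\|_1$ if $d=1$) as $R\to\infty$ together with \eqref{basineqcV}; renormalizing to $S_{m_1}$ and $S_{m_2}$ at cost $o(1)$, one obtains $e(g,m)\geq e(g,m_1)+e(g,m_2)$, contradicting \eqref{eq:strict-sub}.

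\textbf{Compactness and conclusion.} There remain translations $y_n\in\R^d$ such that $v_n\coloneq u_n(\cdot-y_n)$ is tight in $L^2$. After extracting, $v_n\rightharpoonup v$ in $H^1(\R^d)$; tightness plus Rellich gives $v_n\to v$ in $L^2(\R^d)$, so $\|v\|_2^2=m$, and by interpolation $v_n\to v$ in $L^q$ for every $q\in[2,2^*)$. Applying \eqref{basineqcV} to $v_n-v$ (with an $L^{2d/(d-1)}$ or $L^4$ bound) and expanding bilinearly yields $\cV(v_n)\to\cV(v)$. Weak lower semicontinuity of $\cK$ then gives
\[
\cE_g(v)\leq \liminf_{n\to\infty}\cE_g(v_n)=e(g,m),
\]
and since $v\in S_m$ the reverse inequality is automatic, so $v$ is a minimizer. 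Equality forces $\cK(v_n)\to\cK(v)$, i.e.\ $\|\nabla v_n\|_2\to\|\nabla v\|_2$, which combined with weak convergence in $H^1$ and strong $L^2$ convergence upgrades to $v_n\to v$ in $H^1(\R^d)$.

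\textbf{Main obstacle.} The only non--routine ingredient is the strict subadditivity \eqref{eq:strict-sub}, which rests squarely on \cref{sad2}; the handling of the non--local term in the dichotomy step, via the decay of $\|\chi_{\R^d\setminus B_R(0)}V\|_{d/2}$, is the main technical bookkeeping but proceeds exactly as in the argument already used in \cref{sad}.
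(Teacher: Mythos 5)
Your proposal is correct and follows essentially the same route as the paper: boundedness from \cref{prop1}, exclusion of vanishing via Lions' lemma and \eqref{vanicV}, exclusion of mass splitting through the strict subadditivity encoded in \cref{sad2}, and the final upgrade to strong $H^1$ convergence via $\cV(u_n)\to\cV(v)$ and weak lower semicontinuity of $\cK$. The only difference is organizational: the paper works directly with the weak limit $\omega$ and a Brezis--Lieb splitting, reaching the contradiction by the scaling computation with $\cV(\omega)>0$, whereas you first isolate the strict inequality $e(g,m)<e(g,m_1)+e(g,m_2)$ and then run the classical dichotomy argument — both rest on \cref{sad2} in the same way.
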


The proof makes use of the concentration compactness method (\cite{Cazenave1,Lions1,Lions2}). 
Choose $g>g_d^*(m)$. In the following, we shall denote by $\lbrace u_n\rbrace \subset S_m $ any minimizing sequence for $\mathcal{E}_g$, 
which is bounded in $H^1(\R^d)$ thanks to \cref{prop1}. 
\begin{lemma}\label{lions}
Let $Q=[0,1]^d$. Then 
\begin{equation}\label{lmm}
\liminf_{n\rightarrow \infty} \sup_{z\in \mathbb{Z}^d} \|u_n\|_{L^2(z+Q)}>0.
\end{equation}
\end{lemma}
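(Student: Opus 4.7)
The plan is to argue by contradiction and combine a classical vanishing-type lemma with the fact that $e(g,m)<0$ (guaranteed by $g>g^*_d(m)$, thanks to \cref{l:gstar}).

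Suppose, for contradiction, that $\liminf_{n\to\infty} \sup_{z\in \mathbb{Z}^d} \|u_n\|_{L^2(z+Q)} = 0$. Passing to a subsequence (still denoted $\{u_n\}$), we may assume that
\begin{equation*}
	\sup_{z \in \mathbb{Z}^d} \|u_n\|_{L^2(z+Q)} \longrightarrow 0 \quad \text{as } n \to \infty.
\end{equation*}
Since any ball $B(y,1) \subset \R^d$ is covered by a uniformly bounded number of translated unit cubes $z+Q$ with $z \in \mathbb{Z}^d$, this is equivalent to the classical hypothesis $\sup_{y \in \R^d} \|u_n\|_{L^2(B(y,1))} \to 0$.

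Next, I would invoke the Lions vanishing lemma (see e.g.\ \cite[Lemma I.1]{Lions2}): since $\{u_n\}$ is bounded in $H^1(\R^d)$ by \cref{prop1} and the local $L^2$--mass vanishes uniformly, it follows that
\begin{equation*}
	\|u_n\|_q \longrightarrow 0 \quad \text{for every } q \in (2, 2^*),
\end{equation*}
where $2^* = \infty$ if $d=1,2$ and $2^* = 6$ if $d=3$. Applying \eqref{vanicV} then yields $\cV(u_n) \to 0$.

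Finally, from $\cE_g(u_n) = \cK(u_n) - g\cV(u_n) \geq -g\cV(u_n)$ we deduce
\begin{equation*}
	e(g,m) = \lim_{n\to\infty} \cE_g(u_n) \geq \liminf_{n\to\infty} \bigl( - g \cV(u_n) \bigr) = 0.
\end{equation*}
This contradicts $e(g,m) < 0$, which holds because $g > g^*_d(m)$ by \cref{l:gstar}. Hence the desired non-vanishing \eqref{lmm} must hold.

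The main (minor) obstacle is simply making sure the cube-indexed non-vanishing hypothesis feeds into the ball-indexed form of Lions' lemma; this is handled by the elementary covering remark above. Everything else follows immediately from results already established in this section.
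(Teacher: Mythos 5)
Your proposal is correct and follows essentially the same route as the paper: contradiction, Lions' vanishing lemma applied to the $H^1$-bounded sequence, \eqref{vanicV} to kill $\cV(u_n)$, and the resulting $e(g,m)\geq 0$ contradicting $e(g,m)<0$ from \cref{l:gstar}. The only addition is your (harmless and standard) remark reconciling the cube-indexed and ball-indexed forms of the vanishing hypothesis.
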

\begin{proof}
Suppose on the contrary that $\liminf_{n\rightarrow \infty} \sup_{z\in \mathbb{Z}^d} \|u_n\|_{L^2(z+Q)}=0$, 
in particular there is a subsequence  $\lbrace u_{n_k} \rbrace$ with $\lim_{k\rightarrow \infty} \sup_{z\in \mathbb{Z}^d} \|u_{n_k}\|_{L^2(z+Q)}=0$. 
Since  $\lbrace u_n\rbrace $ is bounded in $H^1(\R^d)$, we have by Lions' lemma (\cite[Lemma I.1]{Lions2} and \cite[Lemma 1.21]{Willem}) 
\begin{equation*}%\label{lns}
\|u_{n_k}\|_q\rightarrow 0 \quad \text{for all } q\in (2,2^*). 
\end{equation*}
Hence, \eqref{vanicV} gives $\cV(u_{n_k}) \to 0$ and a contradiction is obtained as in the proof \cref{sad2}. 
Therefore, \eqref{lmm} holds. 
%
%where $2^*:=6$ in $d=3$ and $2^*:=\infty$ in $d=2$.  
%We claim that this implies 
%\begin{equation}\label{lns2}
%\lim_{k\rightarrow \infty}\mathcal{V}(u_{n_k})=0,
%\end{equation}
%which yields a contradiction, since 
%\begin{equation}\label{use_e0}
%0>e(g,m)=\lim_{k\rightarrow \infty} \mathcal{E}_g(u_{n_k})=\lim_{k\rightarrow \infty}  \left(\mathcal{K}(u_{n_k})-g\mathcal{V} (u_{n_k})  \right)\geq 0
%\end{equation}
%thus proving \eqref{lmm}. 
%
%
%It remains to show that \eqref{lns} implies \eqref{lns2}. 
%Let $R>0$ and write $V_{R,1} \coloneq \chi_{B_R(0)} V$ and $V_{R,2} \coloneq V - V_{R,1}$. 
%For any $\e>0$ choose $R$ so that $\Vab{V_{R,2}}_{d/2} < \e$. 
%Then as in \eqref{2ndterm3d} and \eqref{2ndterm2d2}, we may prove 
%\[
%\iint_{\R^d \times \R^d} \vab{u_{n_k}(x)}^2 \vab{V_{R,2}(x-y)} \vab{u_{n_k}(y)}^2 dx dy \leq  C m \e \cK(u_{n_k}). 
%\]
%On the other hand, by $V_{R,1} \in L^1(\R^d)$, Young's inequality and \eqref{lns} yield 
%\[
%\begin{aligned}
%	\iint_{\R^d \times \R^d} \vab{u_{n_k}(x)}^2 \vab{V_{R,1}(x-y)} \vab{u_{n_k}(y)}^2 dx dy 
%	&\leq \Vab{u^2_{n_k}}_2 \Vab{V_{R,1}}_1 \Vab{u^2_{n_k}}_2^2 
%	\\
%	&= \Vab{V_{R,1}}_1 \Vab{u_{n_k}}_4^4 \to 0.
%\end{aligned}
%\]
%Therefore, the boundedness of $\lbrace u_{n_k} \rbrace$ in $H^1(\R^d)$ gives 
%\[
%\limsup_{k \to \infty} \vab{\cV(u_{n_k})} \leq C m \e \limsup_{k \to \infty} \cK(u_{n_k}) \leq C' \e.
%\]
%Since $\e>0$ is arbitrary, \eqref{lns2} holds. 
\end{proof}

Now we prove \cref{T:Exming>g^*}:

\begin{proof}[Proof of \cref{T:Exming>g^*}]

By \cref{lions} there exists a sequence $\lbrace x_n \rbrace$ such that $\|u_n(\cdot+x_n)\|_{L^2(Q)}\rightarrow C_0>0$.
By the translation invariance of $\mathcal{E}_g$, 
we may suppose 
\begin{equation*}
 \|u_n\|_{L^2(Q)}\rightarrow C_0>0, \quad 
 u_n \rightharpoonup \omega \quad \text{weakly in $H^1(\R^d)$}, \quad 
 u_n \to \omega \quad \text{strongly in $L^2_{loc} (\R^d)$}. 
 \end{equation*}
Note that $\omega \not \equiv 0$: this follows from $\|u_n\|_{L^2(Q)}\rightarrow C_0>0$ and the Rellich--Kondrashov theorem.
Our next goal is to prove that $\|\omega\|_2^2=m$.

%We adapt the argument in \cite{Cazenave1}. 
To prove $\omega \in S_m$ by contradiction, assume 
\begin{equation}\label{m2g0}
	0< m_1 \coloneq \Vab{\omega}_2^2 < m. 
\end{equation}
Set $m_2 \coloneq m - m_1 > 0$ and $v_n \coloneq u_n - \omega$. Then we claim 
\begin{equation}\label{e12}
	e(g,m)=\liminf_{n\rightarrow\infty}\mathcal{E}_g(u_{n})
	= 
	\mathcal{E}_g(\omega)+\liminf_{n \to \infty}\mathcal{E}_g(w_n). 
\end{equation}
In fact, from $v_n \rightharpoonup 0$ weakly in $H^1(\R^d)$, it follows that 
\[
\Vab{\nabla u_n}_2^2 = \Vab{\nabla (\omega + v_n)}_2^2 = \Vab{\nabla \omega}_2^2 + \Vab{\nabla v_n}_2^2 + o(1). 
\]
Furthermore, since $u_n = \omega + v_n$ and $\omega v_n \to 0$ strongly in $L^p(\R^d)$ with $1 \leq p < 2^*/2$, we have 
\[
\iint_{\R^d \times \R^d} \vab{ \omega (x) v_n (x) } \vab{V(x-y)} u_n^2(y) + u_n^2(x) \vab{V(x-y) \vab{ \omega(y) v_n(y) }} dxdy \to 0,
\]
which yields 
\begin{equation*}\label{deccV}
	\cV(u_n) = \cV(v_n) + \cV(w_n) + o_n(1). 
\end{equation*}
Therefore, \eqref{e12} holds.

From the Brezis--Lieb Lemma, 
\begin{equation*}
	\|v_n\|_2^2\rightarrow m_2= m-m_1. 
\end{equation*}
Set 
\begin{equation*}
\tilde{v}_n \coloneq \frac{\sqrt{m_2}}{\|v_n\|_2}v_n.
\end{equation*}
It it easily seen from the boundedness of $\lbrace u_n \rbrace$ that 
\begin{equation*}
\mathcal{E}_g( \omega )\geq e(g,m_1), \quad 
 \lim_{n \to \infty} \mathcal{E}_g(v_n)=\lim_{n\to\infty} \mathcal{E}_g(\tilde{v}_n)\geq e(g,m_2).
\end{equation*}
By \cref{sad} and \eqref{e12}, this actually gives 
\begin{equation}\label{eg_sum}
	e(g,m)=e(g,m_1)+e(g,m_2)
\end{equation}
and 
\begin{equation*}\label{limsvw}
	\mathcal{E}_g(\omega)= e(g,m_1), \quad  \lim_{n\rightarrow \infty} \mathcal{E}_g(\tilde{v}_n)= e(g,m_2).
\end{equation*}
Note that $e(g,m)<0$ implies $e(g,m_1)<0$ and $e(g,m_2)<0$; 
in fact, if, for instance, $e(g,m_2)=0$ then we must have $e(g,m)=e(g,m_1) < 0$, 
which contradicts \cref{sad2}, and similarly for the case $e(g,m_1)=0$. 
From $e(g,m_1)<0$ and $e(g,m_2) < 0$, up to a subsequence, 
\begin{equation}\label{cvw}
\mathcal{V}(\omega) >0 \quad \lim_{n\to\infty} \mathcal{V}(\tilde{v}_n)=C_v>0.
\end{equation}
We clearly have $e(g,m)\leq \mathcal{E}_g( \sqrt{m/m_1} \, \omega)$, $e(g,m)\leq \mathcal{E}_g( \sqrt{m/m_2} \, \tilde{v}_n)$ and thus
\begin{equation*}
	\begin{aligned}
		e(g,m) &=
		\frac{m_1}{m}e(g,m)+\frac{m_2}{m}e(g,m) 
		\\
		&\leq 
		\mathcal{E}_g(\omega)+ \mathcal{E}_g(\tilde{v}_n) 
		-g \ab(\frac{m}{m_1}-1) \mathcal{V}(\omega) -g \ab(\frac{m}{m_2}-1)\mathcal{V}(\tilde{v}_n). 
	\end{aligned}
\end{equation*}
Due to \eqref{eg_sum} and \eqref{cvw}, 
taking the limit $n\rightarrow \infty$ yields
\begin{equation*}
e(g,m)\leq e(g,m_1)+e(g,m_2)-\ab(\frac{m}{m_1}-1)\ \cV(\omega) -\ab(\frac{m}{m_2}-1)C_v<e(g,m),
\end{equation*}
which is a contradiction, rooted in the assumption \eqref{m2g0}. Hence, the weak limit of $u_n$, $\omega$, satisfies 
\begin{equation}\label{norm_L}
\|\omega\|_2^2=m=\|u_n\|_2^2.
\end{equation}
In particular, by \eqref{norm_L} and $u_n \rightharpoonup \omega$ weakly in $H^1(\R^d)$, 
$u_n \to \omega$ strongly in $L^2(\R^d)$. 
By the interpolation, $u_n \to \omega$ strongly in $L^q(\R^d)$ for each $q \in [2,2^\ast)$ 
and Young's inequality with $V \in L^{ \max \{ 1 , d/2 \} } (\R^d)$ implies 
\begin{equation}\label{V_L}
	\lim_{n\to\infty}\mathcal{V}(u_n)=\mathcal{V}(\omega).
\end{equation}
Finally, from $u_n \rightharpoonup \omega$ weakly in $H^1(\R^d)$ and \eqref{V_L}, we infer that 
\[
e(g,m) \leq \cE_g(\omega) \leq \liminf_{n\to\infty} \cE_g(u_n) = \lim_{n \to \infty} \cE_g(u_n) = e(g,m),
\]
which gives 
\begin{equation*}
	\cE_g(\omega) = e(g,m), \quad 
\lim_{n\to\infty}\mathcal{K}(u_n) = \mathcal{K}(\omega).
\end{equation*}
Hence, $u_n \to \omega$ strongly in $H^1(\R^d)$ and $\omega$ is a minimizer. 
\end{proof}

%%%%%%%%%%%%%%%%%%%%%%%%%%%%%%%%%%%%%%%%%%%%%%%%%%%%%%%%%%%%%%%%%%%%%%%%%%%%%%%%%%%%%
%%%%%%%%%%%%%%%%%%%%%%%%%%%%%%%%%%%%%%%%%%%%%%%%%%%%%%%%%%%%%%%%%%%%%%%%%%%%%%%%%%%%%
\section{Non--local Pokhozaev Identity}
%%%%%%%%%%%%%%%%%%%%%%%%%%%%%%%%%%%%%%%%%%%%%%%%%%%%%%%%%%%%%%%%%%%%%%%%%%%%%%%%%%%%%
%%%%%%%%%%%%%%%%%%%%%%%%%%%%%%%%%%%%%%%%%%%%%%%%%%%%%%%%%%%%%%%%%%%%%%%%%%%%%%%%%%%%%

In this section, we prove that any minimizer for $e(g,m)$ with $g>g^*_d(m)$ satisfies a version of the Pokhozaev identity well--known from the local theory \cite{BerestyckiLions} . 
Moreover, for later use, we also establish the Pokhozaev identity for critical points of $\cE_g |_{S_m}$ when $d=3$. 
The precise aim of this section is to prove the following:

\begin{proposition}\label{p:Poho}
	Suppose {\rm \ref{A1}}, {\rm \ref{A2}}, {\rm \ref{B1}}, $V(-x) = V(x)$ and $W(x) \coloneq x \cdot \nabla V(x) \in L^1(\R^d) + L^\infty(\R^d)$. 
	\begin{enumerate}
		\item 
		Suppose that $u \in S_m$ satisfies $\cE_g(u) = e(g,m) \leq 0$. 
		Then the following Pokhozaev identity holds: 
		\begin{equation}\label{Poho}
			\frac{d-2}{2} \Vab{\nabla u}_2^2 + \frac{d}{2} \lambda \Vab{u}_2^2 
			= 
			\frac{g}{4} \int_{\R^d} u^2 (W * u^2) dx + \frac{d}{2} g \int_{\R^d} u^2 (V* u^2) dx,
		\end{equation}
		where $\lambda \in \R$ is the corresponding Lagrange multiplier, that is, $u$ is a solution of 
		\[
		- \Delta u + \lambda u = g(V*u^2) u \quad \text{in} \ \R^d. 
		\]
		
		\item 
		If $d=3$, $V \geq 0$ and $u$ is a critical point of $\cE_g |_{S_m}$ with $u \geq 0$ (resp. $u \leq 0$) or $u(x) = u(\vab{x})$, 
		then $u$ satisfies \eqref{Poho}. 
	\end{enumerate}
\end{proposition}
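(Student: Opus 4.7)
The plan is to multiply the Euler--Lagrange equation $-\Delta u + \lambda u = g(V*u^2)u$ by the dilation field $\chi_R(x)\,x\cdot\nabla u(x)$, where $\chi_R \in C^\infty_c(\R^d;[0,1])$ equals $1$ on $B_R$, vanishes outside $B_{2R}$, and satisfies $\Vab{\nabla \chi_R}_\infty \le C/R$, and then send $R\to\infty$. A prerequisite is enough regularity of $u$: from $V\in L^{d/2}$ and $u\in H^1$, Young's inequality gives $V*u^2 \in L^\infty + L^q$ for a suitable $q$, and elliptic bootstrap applied to $(-\Delta+\lambda)u = g(V*u^2)u$ yields $u \in W^{2,p}_{\mathrm{loc}}(\R^d)$ for every $p<\infty$, so $x\cdot\nabla u$ is pointwise defined and locally square integrable. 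The overall strategy closely parallels \cite[Section~3]{MorozVanShaftingen13}, with the Riesz kernel there replaced by the general even $V$.

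On the linear side, integration by parts together with the $H^1$-estimate $\int_{B_{2R}\setminus B_R}(|\nabla u|^2+u^2) \to 0$ yields
\begin{equation*}
\int_{\R^d}(-\Delta u)(x\cdot\nabla u)\,\chi_R\,dx \longrightarrow -\tfrac{d-2}{2}\Vab{\nabla u}_2^2,\qquad \lambda\int_{\R^d} u(x\cdot\nabla u)\,\chi_R\,dx \longrightarrow -\tfrac{d\lambda}{2}\Vab{u}_2^2.
\end{equation*}
For the nonlocal side, set $f \coloneq u^2$ and $h \coloneq V*f$; the standard manipulation $\int h(x\cdot\nabla f)\chi_R = -d\int fh\,\chi_R - \int f(x\cdot\nabla h)\chi_R + o(1)$ reduces matters to computing $\int f\,(x\cdot\nabla h)$. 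Here the assumption $V(-x)=V(x)$ enters decisively, since it makes $\nabla V$ odd: swapping $x\leftrightarrow y$ in the associated double integral gives the symmetrization
\begin{equation*}
\iint_{\R^d\times\R^d} f(x)\,(x\cdot\nabla V(x-y))\,f(y)\,dx\,dy \;=\; \tfrac{1}{2}\iint_{\R^d\times\R^d} f(x)\,W(x-y)\,f(y)\,dx\,dy \;=\; \tfrac{1}{2}\int_{\R^d} f\,(W*f)\,dx.
\end{equation*}
Assembling the two sides and multiplying through by $-1$ produces exactly the identity \eqref{Poho}.

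The main obstacle is the passage to the limit $R\to\infty$ in the nonlocal piece, where one must prove that the cut-off error in the double $W$-integral vanishes. This is handled by the decomposition $W \in L^1(\R^d)+L^\infty(\R^d)$ combined with $u\in H^1\cap L^\infty$ (from the bootstrap), bounding each piece via Young's inequality and dominated convergence once $u$ is known to have some decay at infinity. For item~1, testing the equation against $u$ gives $\lambda\Vab{u}_2^2 = 4g\cV(u)-\Vab{\nabla u}_2^2$, and $\cE_g(u)\le 0$ forces $4g\cV(u)\ge 2\Vab{\nabla u}_2^2$, so $\lambda>0$ and $u$ decays exponentially; all limits are then routine. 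For item~2 in $d=3$, no a priori sign on $\cE_g(u)$ or $\lambda$ is available, so the sign-definiteness or radiality hypothesis is used precisely to invoke the decay estimate of \cref{l:decay}, which supplies the integrability at infinity needed to justify the same limit passage. The algebraic content of the computation is identical in both cases.
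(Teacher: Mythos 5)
Your overall strategy is the paper's: establish regularity and decay (positivity of $\lambda$ in case 1 via testing against $u$, the estimates of \cref{l:decay} in case 2), test the Euler--Lagrange equation against a cutoff times $x\cdot\nabla u$, and pass to the limit. The kinetic and mass terms, the final assembly, and the sign bookkeeping are all correct and identical to the paper's computation.

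The one substantive divergence is your treatment of the nonlocal term, and it contains a gap. You reduce to
\begin{equation*}
\iint_{\R^d\times\R^d} f(x)\,\bigl(x\cdot\nabla V(x-y)\bigr)\,f(y)\,dx\,dy
=\frac12\iint_{\R^d\times\R^d} f(x)\,W(x-y)\,f(y)\,dx\,dy
\end{equation*}
by swapping $x\leftrightarrow y$ and using that $\nabla V$ is odd. But the swap requires the left-hand integrand to be absolutely integrable, and the hypotheses do not give this: writing $x\cdot\nabla V(x-y)=W(x-y)+y\cdot\nabla V(x-y)$, the first piece is controlled by $W\in L^1+L^\infty$, but the second involves the full gradient $\nabla V(x-y)$ weighted by $|y|$, and the assumptions ($V\in C^1\cap L^{d/2}$, $W=x\cdot\nabla V\in L^1+L^\infty$) control only the radial derivative of $V$ at infinity, not the angular components of $\nabla V$, which may be unbounded. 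Exponential decay of $u$ does not rescue this, since $|x-y|$ can be large while $f(x)f(y)$ is merely small. The paper circumvents exactly this point: it inserts a second (inner) cutoff $\varphi_k$ so that $V*(\varphi_k u^2)$ is legitimately $C^1$, splits off the $W$ term, and then integrates by parts \emph{in $y$} so that the dangerous $y\cdot\nabla V(x-y)$ term is converted back into integrals involving $V$ itself convolved against $u^2$ and against $u\,(y\cdot\nabla u)$ --- the latter controlled by Young's inequality together with the boundedness of $y\cdot\nabla u$ from \cref{l:decay}. Your symmetrization shortcut yields the right answer and would be fine under a stronger hypothesis such as $|x|\,|\nabla V(x)|\in L^1+L^\infty$, but as written it is not justified under the stated assumptions; to close the gap you should either integrate by parts in $y$ as the paper does, or first regularize with a compactly supported approximation of $u^2$ inside the convolution before symmetrizing.
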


To prove \cref{p:Poho}, we closely follow the arguments of \cite[section 3]{MorozVanShaftingen13}. 
For this purpose, decay estimates on $u$ and $\nabla u$ are necessary:

\begin{lemma}\label{l:decay}
	Assume {\rm \ref{A1}}, {\rm \ref{A2}} and $V(-x) = V(x)$. 
	\begin{enumerate}
		\item 
		If $u \in S_m$ satisfies $\cE_g(u) = e(g,m) \leq 0$, 
		then $u \in C^2(\R^d)$, the Lagrange multiplier $\lambda$ is strictly positive 
		and $u, \partial_i u$ display the following exponential decay: 
		there exists $C>0$ such that 
		\[
		\vab{u(x)} + \vab{\partial_i u (x)} \leq C e^{ - \sqrt{\lambda} \vab{x} / 2 } 
		\quad \text{for all $x \in \R^d$}. 
		\]
		
		\item 
		If $d=3$, $V \geq 0$ and $u$ is a critical point of $\cE_g|_{S_m}$ with $u \geq 0$ (resp. $u \leq 0$) or $u(x) = u(\vab{x})$, 
		then $u \in C^2(\R^d)$, $u(x) \to 0$ as $\vab{x} \to \infty$ 
		and $\vab{x} \vab{\partial_i u(x)} \in L^\infty(\R^d)$. 
	\end{enumerate}
\end{lemma}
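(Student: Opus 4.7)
\textbf{Proof proposal for \cref{l:decay}.} I follow the scheme of \cite{MorozVanShaftingen13}: first establish elliptic regularity, then extract pointwise decay from the asymptotic smallness of the convolution potential $V \ast u^2$.

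\emph{Step 1 (Equation and regularity).} In both parts, the Lagrange multiplier rule supplies $\lambda \in \R$ with
\[
-\Delta u + \lambda u = g\,(V \ast u^2)\,u \quad \text{in } \R^d.
\]
For $u \in H^1(\R^d)$, the splitting $V = V_{\e,1}+V_{\e,2}$ from \ref{A1} combined with Young's and Hölder's inequalities shows $V \ast u^2 \in L^\infty(\R^d)$. A standard Brezis--Kato / Moser iteration then yields $u \in L^\infty$, and $W^{2,p}$ Calderón--Zygmund estimates upgrade this to $u \in C^2(\R^d)$. Moreover, approximating $V_{\e,1}$ in $L^{q_\e}$ by compactly supported functions and letting $\e \to 0$, one obtains $(V \ast u^2)(x) \to 0$ as $\vab{x} \to \infty$; this step uses only $u \in L^2$ and does not invoke any decay of $u$ itself.

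\emph{Step 2 (Part 1).} Testing the equation against $u$ gives $\lambda m = 4g\,\cV(u) - \Vab{\nabla u}_2^2$, and combining with $\cE_g(u) \leq 0$ forces $\lambda m \geq \Vab{\nabla u}_2^2 > 0$ (strictly, since $u \in S_m$ cannot be constant). Fix $R$ with $g\,\vab{(V \ast u^2)(x)} \leq \lambda/2$ on $\set{\vab{x} \geq R}$, and replace $u$ by $\vab{u}$, which is still a minimizer because $\cE_g(\vab{u}) = \cE_g(u)$; the strong maximum principle then yields $u > 0$. The equation implies
\[
-\Delta u + \tfrac{\lambda}{2}\,u \leq 0 \quad \text{on } \set{\vab{x} > R},
\]
and comparison with the radial supersolution $C\exp(-\sqrt{\lambda}\,\vab{x}/2)$ of $-\Delta + \lambda/2$ (enlarging $R$ to dominate the $(d-1)/\vab{x}$ lower-order correction coming from differentiating the exponential) furnishes the stated pointwise bound on $u$. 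For $\partial_i u$, interior $W^{2,p}$ estimates on unit balls $B(x,1)$ combined with Sobolev embedding transfer the exponential decay from $u$ and from the bounded right-hand side to the gradient.

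\emph{Step 3 (Part 2 and main obstacle).} For $d=3$ and a general critical point, $\lambda$ need not be positive, so exponential decay is not automatic. In the radial case, the Strauss inequality $\vab{u(x)} \leq C \vab{x}^{-1} \Vab{u}_{H^1}$ in $\R^3$ immediately yields $u(x) \to 0$. In the sign-definite case $u \geq 0$ (the case $u \leq 0$ being analogous), the hypotheses $V \geq 0$ and $u \in L^2(\R^3)$ preclude $\lambda < 0$, since $-\Delta$ on $\R^3$ admits no positive $L^2$ eigenfunction with positive eigenvalue; hence $\lambda \geq 0$. For $\lambda > 0$, Step 2 applies verbatim. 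For the borderline case $\lambda = 0$, I will invoke the Newtonian representation
\[
u(x) = \int_{\R^3} \frac{g\,(V \ast u^2)(y)\,u(y)}{4\pi\,\vab{x-y}}\,dy,
\]
and observe that $F \coloneq g\,(V \ast u^2)\,u$ belongs to $L^1 \cap L^\infty$ and satisfies $F(y) \to 0$ as $\vab{y} \to \infty$ by Step 1 together with the boundedness of $u$. Splitting the convolution at $\vab{y}=\vab{x}/2$ then yields $u(x) \to 0$, and interior $W^{2,p}$ estimates on $B(x,1)$ combined with the analogous gradient representation give the bound $\vab{x}\,\vab{\partial_i u(x)} \in L^\infty$. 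The main obstacle I anticipate is precisely this borderline case $\lambda = 0$: without exponential decay one must carefully push the decay of $V \ast u^2$ through the slowly decaying Newtonian kernel and track enough cancellation in $\nabla u$ to recover the optimal $1/\vab{x}$ rate.
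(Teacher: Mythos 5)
Your Steps 1 and 2 follow the paper's proof of part 1 essentially verbatim: bootstrap elliptic regularity, positivity of $\lambda$ from testing the equation against $u$ together with $\cE_g(u)\leq 0$, comparison with an exponential supersolution outside a large ball after replacing $u$ by $\vab{u}$, and local $W^{2,q}$ estimates to transfer the decay to $\partial_i u$. The only quibble there is that for $d=2$ the opening claim $V\ast u^2\in L^\infty$ is circular before you know $u\in L^\infty$: with $V_{\e,2}\in L^{1}$ and $u^2$ only in $\bigcap_{q<\infty}L^q$, Young's inequality gives $V_{\e,2}\ast u^2\in L^q$ for finite $q$ only. You should, as the paper does, first obtain $c_u\coloneq g(V\ast u^2)\in L^q$ for some $q>d/2$, run Moser iteration to get $u\in L^\infty$, and only then upgrade $c_u$ to $L^\infty$.

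In part 2 there are two issues. First, the ``main obstacle'' you single out is illusory: the same argument you use to exclude $\lambda<0$ for sign-definite $u$ also excludes $\lambda=0$. If $u\geq 0$, $V\geq 0$ and $\lambda\leq 0$, then $-\Delta u=g(V\ast u^2)u-\lambda u\geq 0$, and a nonnegative superharmonic $L^2(\R^3)$ function vanishes identically (this is \cite[Lemma A.2]{ikoma}; it is also exactly what your Newtonian representation would reveal if carried out, since $u=G\ast F$ with $F\geq 0$, $F\not\equiv 0$ forces $u(x)\geq c\vab{x}^{-1}$ for large $\vab{x}$ and hence $u\notin L^2(\R^3)$). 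So for sign-definite critical points one has $\lambda>0$ outright, part 1 applies, and no Newtonian-kernel analysis is needed. Second, and this is the genuine gap, you never establish $\vab{x}\vab{\partial_i u(x)}\in L^\infty$ for a \emph{radial, sign-changing} critical point with $\lambda\leq 0$ --- the one case where exponential decay is unavailable; your representation formula does not cover it either, since for $\lambda<0$ the equation $-\Delta u=g(V\ast u^2)u-\lambda u$ is of Helmholtz rather than Poisson type. The paper's fix is short and you should adopt it: the radial (Strauss) bound $\vab{u(x)}\leq C\Vab{u}_{H^1}\vab{x}^{-1}$ makes the $L^q(B_2(z))$ norms of $u$ and of the right-hand side $c_u u-\lambda u$ of order $\vab{z}^{-1}$, and the interior $W^{2,q}$ estimate with $q>3$ then yields $\vab{\partial_i u(z)}\leq C\vab{z}^{-1}$ for $\vab{z}\geq 4$.
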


\begin{proof}
We first notice that in each case, there exists a Lagrange multiplier $\lambda \in \R$ such that $u$ is a weak solution of 
\begin{equation}\label{EL-eq}
-\Delta u + \lambda u = g (V \ast u^2) u \quad \text{in} \ \R^d. 
\end{equation}
When $u \in S_m$ satisfies $\cE_g(u) = e(g,m) \leq 0$, $\lambda > 0$ holds. In fact, from \eqref{EL-eq} and $\cE_g(u) = e(g,m) \leq 0$ it follows that 
\[
\lambda m = - \Vab{\nabla u}_2^2 + g \int_{\R^d} (V\ast u^2) u^2 dx 
= - 4 \cE_g (u) + \Vab{\nabla u}_2^2 = - e(g,m) + \Vab{\nabla u}_2^2 > 0. 
\]

We next show $u \in C^2(\R^d)$ and $u(x) \to 0$ as $\vab{x} \to \infty$. 
For this purpose, write $c_u(x) \coloneq g (V \ast u^2)(x)$. 
By $V \in L^{ \max \{ 1 , d/2 \} } (\R^d)$ and $u^2 \in L^q(\R^d)$ for all $q \in [1,\infty)$ if $d=1,2$ 
and for all $q \in [1,3]$ if $d=3$, 
Young's inequality implies $c_u \in L^q(\R^d)$ for any $q \in [ \max \{1 , d/2\} ,\infty)$. 
Since $u$ is a weak solution of $-\Delta u +(\lambda - c_u) u = 0$ in $\R^d$, 
from Moser's iteration (\cite[Chapter 8]{GilbargTrudinger} and \cite[Chapter 4]{HanLin}), 
there exists $C>0$ such that
\[
\Vab{u}_{L^\infty(B_{1}(z))} \leq C \Vab{u}_{L^2(B_2(z))} \quad \text{for all $z \in \R^d$},
\]
which implies $u \in L^\infty(\R^d)$ and $u(x) \to 0$ as $\vab{x} \to \infty$. 
In particular, $u \in L^q(\R^d)$ for every $q \in [2,\infty]$ and $c_u = g (V*u^2) \in L^q(\R^d)$ for each $q \in [ \max \{  1, d/2\} ,\infty]$. 
From the $L^2$-theory, $u \in H^2_{loc} (\R^d)$ holds and by rewriting \eqref{EL-eq} as 
\[
-\Delta u = c_u u - \lambda u \in L^\infty(\R^d), 
\]
the $L^p$-theory (\cite[Chapter 9]{GilbargTrudinger}) implies that 
for any $q \in (1,\infty)$ there exists $C_q$ such that
\begin{equation}\label{W2qest}
\Vab{u}_{W^{2,q} (B_1(z)) } \leq C_q \ab( \Vab{u}_{L^q(B_2(z))} + \Vab{c_u u - \lambda u}_{L^q(B_2(z))} ) \quad \text{for any $z \in \R^d$}.
\end{equation}
Thus, $u \in C^1(\R^d)$ with $\nabla u \in L^\infty(\R^d)$. 
Hence, $c_u = g (V *u^2) \in C^1(\R^d)$ and $c_u(x) \to 0$ as $\vab{x} \to \infty$ due to $V \in L^{ \max\{1,d/2\} } (\R^d)$.  
By the Schauder theory, $u \in C^2(\R^d)$ holds.

Now we prove that $\vab{u}$ and $\vab{\partial_i u}$ decay exponentially when $\cE_g(u) = e(g,m) \leq 0$.  
In this case, we note that $v \coloneq \vab{u}$ is also a minimizer 
since $\Vab{\nabla v}_2 \leq \Vab{\nabla u}_2$ and $\cE_g(v) \leq \cE_g(u)$. 
To show the exponential decay of $\vab{u}$, it suffices to show it under $u \geq 0$. 
We first prove the exponential decay of $\vab{u}$. Let us suppose $u \geq 0$ and 
as shown in the above, $\lambda > 0$ holds in this case. 
Thanks to the facts that $\lambda > 0$ and $\vab{c_u(x)} + u(x) \to 0$ as $\vab{x} \to \infty$, there exists an $R_0>0$ such that 
\[
\lambda - c_u(x) \geq \frac{\lambda}{2} \quad \text{for each $\vab{x} \geq R_0$}.
\]
Hence, 
\[
0 = -\Delta u + \ab( \lambda - c_u ) u \geq -\Delta u + \frac{\lambda}{2} u \quad \text{for any $\vab{x} \geq R_0$}.
\]
Let $C_0>0$ be a constant satisfying 
\begin{equation*}
	\ab( \max_{|z|\leq R_0} u_0(z) ) \exp\ab(\sqrt{\frac{\lambda_0}{2}}R_0)\leq C_0.
\end{equation*}
Thus $u_0(x)\leq w_{R_0}(x) \coloneq C_0 \exp\left(-\sqrt{\frac{\lambda_0}{2}}|x|\right)$ for $|x|\leq R_0$, 
and it remains to show that the same bound holds also for $|x|>R_0$. Note that 
\begin{equation*}
	-\Delta w_{R_0} +\frac{\lambda_0}{2}w_{R_0}\geq 0 \quad \text{for} \  \vab{x}\geq R_0.
\end{equation*}
Hence, 
\[
-\Delta ( w_{R_0} - u  ) + \frac{\lambda}{2} (w_{R_0} -u) \geq 0 \quad 
\text{for every $\vab{x} \geq R_0$}. 
\]
Since $w_{R_0} > 0$ and $w_{R_0} (x) - u(x) \to 0$ as $\vab{x} \to \infty$, 
the function $w_{R_0} - u$ cannot take a strictly negative minimum in $\vab{x} > R_0$, which implies 
$w_{R_0} - u \geq 0$ in $\R^d \setminus B_{R_0}(0)$. 
Thus, the desired exponential decay holds for $\vab{u}$.

The exponential decay estimate of $\vab{\partial_i u}$ follows from \eqref{W2qest} and 
$\Vab{u}_{C^{1}(B_1(z))} \leq C_q \Vab{u}_{W^{2,q} (B_1(z))}$ for each $q>d$ where 
$C_q$ is independent of $z \in \R^d$.

Finally, we prove $\vab{x} \vab{\partial_i u(x)} \in L^\infty(\R^3) $ for any critical point $u$ of $\cE_g|_{S_m}$ 
when $d=3$, $V \geq 0$ and either $u \geq 0$ (resp. $u \leq 0$) or $u(x) = u(\vab{x})$. 
If $\lambda > 0$ in \eqref{EL-eq}, then $\vab{\nabla u}$ decays exponentially and the desired property holds. 
Therefore, it is enough to treat the case $\lambda \leq 0$.

Suppose $u \geq 0$. By $V \geq 0$, $c_u = g(V*u^2) \geq 0$. From $\lambda \leq 0$ and $u \geq 0$, it follows that 
\[
-\Delta u = c_u u - \lambda u \geq 0 \quad \text{in} \ \R^3. 
\]
But there exists no non--negative twice differentiable $L^2$ function $u$  with $u\nequiv 0$ and  $-\Delta u\geq 0$ \cite[Lemma A.2]{ikoma} (think of the case $d=1$ as a motivation). 
Since $u \in L^2(\R^3)$ and $\Vab{u}_2^2 = m$, we obtain a contradiction. 
% contradicts \cite[Lemma A.2]{ikoma}. 
Thus, when $u \geq 0$, the Lagrange multiplier is always positive and 
the desired decay estimate holds when $u \geq 0$. 
The case where $u \leq 0$ is reduced to the former case by considering $-u$.

On the other hand, let $u(x) = u(\vab{x})$. It is known that $\vab{u(x)} \leq C \Vab{u}_{H^1} \vab{x}^{-1} $ for each $\vab{x} \geq 1$ 
(see \cite[Radial Lemma A.II.]{BerestyckiLions}). From this estimate, it follows that for any $q \in (1,\infty)$ and $z \in \R^3$ with $\vab{z} \geq 4$, 
\[
\Vab{u}_{L^q(B_2(z))} + \Vab{c_u u - \lambda u}_{L^q(B_2(z))} \leq \frac{C_q}{\vab{z}}. 
\]
By \eqref{W2qest} with $q>d$, we get $\vab{z} \vab{\partial_i u(z)} \leq C_q$ for every $\vab{z} \geq 4$ and 
this completes the proof for radial $u$. 
\end{proof}

\begin{remark}\label{positivity}
If $\lambda > 0$ and $u \geq 0$ satisfies 
\[
-\Delta u + \lambda u = c_u u \quad \text{in} \ \R^d, \quad c_u(x) \coloneq g (V*u^2) (x),
\]
then by rewriting the equation as $-\Delta u + (\lambda + c_u^-) u = c_u^+ u \geq 0$ 
where $c^\pm_u \coloneq \max \Set{ \pm c_u , 0 } \geq 0$, 
the strong maximum principle yields $u > 0$ in $\R^d$. 
In particular, if $u \geq 0$ is a minimizer of $e(g,m) \leq 0$, then $u > 0$ in $\R^d$. 
\end{remark}

We now prove \cref{p:Poho}:

\begin{proof}[Proof of \cref{p:Poho}]
We prove assertions 1 and 2 simultaneously. 
Pick a radially symmetric decreasing function $\varphi_1\in C_c^{\infty}(\mathbb{R}^d)$ with the properties
\begin{equation*}
	0\leq \varphi_1 \leq 1 \text{ in }  \mathbb{R}^d, \quad  \varphi_1\equiv 1 \text{ on } B_1(0),  \quad \varphi_1\equiv 0 \text{ on } \mathbb{R}^d\backslash B_2(0)
\end{equation*}
and set, for $n\geq 1$, 
\begin{equation*}
	\varphi_n(x):=\varphi_1(nx).
\end{equation*}
Note that for all $w\in L^1(\mathbb{R}^2)$, 
\begin{equation}\label{lims}
	\begin{aligned}
		&\int_{\mathbb{R}^d} \vab{\varphi_n w} dx \to \|w\|_1 \text{ as } n\to \infty, \\
		&\int_{\mathbb{R}^d} \vab{x} \vab{ \nabla \varphi_n (x)} \vab{w (x)}  d x \leq \ab(\max_{1\leq |y|\leq 2}|y| |\nabla \varphi_1(y)|) 
		\int_{n\leq |x|\leq 2n} |w| dx \to 0 \text{ as } n\to \infty. \\
	\end{aligned}
\end{equation}
Let $u$ be as in 1 or 2 in \cref{p:Poho} and set 
\begin{equation*}
	v_n(x):=\varphi_n(x) x \cdot \nabla u(x) \in H^1(\mathbb{R}^d).
\end{equation*}
Since $u$ is a solution to
\[
-\Delta u + \lambda u = g (V*u^2) u \quad \text{in} \ \R^d
\]
for some $\lambda \in \R$, we multiply the equation by $v_n$ and integrate over $\mathbb{R}^d$:
\begin{equation}\label{pkz_n}
	\int_{\R^d} \nabla u \cdot \nabla v_n+\lambda u v_n dx = g\int_{\R^d} (V\ast u^2) u v_n dx.  
\end{equation}
We have, by the integration by parts and a straightforward computation, 
\begin{equation*}
	\int_{\R^d} \nabla u \cdot \nabla v_n dx 
	=\int_{\R^d} \ab ( \nabla u \cdot \nabla \varphi_n x \cdot \nabla u + \frac{2-d}{2} \varphi_n \vab{\nabla u}^2 - \frac{1}{2} x \cdot \nabla \varphi_n \vab{\nabla u}^2  )  
	dx .
\end{equation*}
It follows from \eqref{lims} that 
\begin{equation*}
	\int_{\R^d} \vab{ \nabla u \cdot \nabla \varphi_n x \cdot \nabla u } + \vab{x \cdot \nabla \varphi_n \vab{\nabla u}^2} dx
	\leq 
	2\int_{\R^d} \vab{x} \vab{\nabla \varphi_n} \vab{\nabla u}^2 dx \to 0,
\end{equation*}
which implies 
\begin{equation}\label{1st-t}
	\lim_{n\to \infty} \int_{\R^d} \nabla u \cdot \nabla v_n dx = \frac{2-d}{2} \Vab{\nabla u}_2^2.  
\end{equation}
Integrating by parts, we have in a similar fashion 
\begin{equation}\label{middle}
	\lambda \int_{\R^d} u_0 v_n dx = -\lambda	\int_{\R^d} ( x \cdot \nabla \varphi_n + d \varphi_n ) \frac{|u|^2}{2}d x 
	\to -\frac{d \lambda}{2}\|u\|_2^2.
\end{equation}
We are left with the interaction term. 
From $V \in C^1(\R^d)$ and $\varphi_k u^2 \in C^1_c(\R^d)$ for every $k \in \N$, 
it follows that 
\[
V * (\varphi_k  u^2 ) \in C^1(\R^d), \quad \nabla (V \ast \varphi_k u^2) = (\nabla V) \ast (\varphi_k u^2). 
\]
Notice that 
\[
\int_{\R^d} (V * u^2) u v_n dx = \lim_{k \to \infty} \int_{\R^d} \ab( V * (\varphi_k u^2 ) ) uv_n dx, 
\]
and that the integrating by parts gives 
\begin{equation*}
	\begin{aligned}
		\int_{\mathbb{R}^d} \ab( V* (\varphi_k u^2) ) u v_n dx 
		&= 
		\int_{\mathbb{R}^d} \ab( V* (\varphi_k u^2) )  \varphi_n x_i \partial_i \ab(\frac{u^2}{2}) dx 
		\\
		&=
		-\frac{1}{2}\int_{\mathbb{R}^d } x_i \ab (\partial_{x_i} V * (\varphi_k u^2)  ) \varphi_n u^2 dx\\
		&\quad - \frac{1}{2} \int_{\R^d} \ab( V * (\varphi_k u^2) ) (x \cdot \nabla \varphi_n(x)) u^2 dx \\
		&\quad - \frac{d}{2}\int_{\mathbb{R}^d } \ab( V * (\varphi_k u^2) ) \varphi_n u^2 dx. 
	\end{aligned}
\end{equation*}
By \eqref{lims}, it is easily seen that 
\[
\lim_{n \to \infty} \lim_{k \to \infty} \int_{\R^d} \ab( V * (\varphi_k u^2) ) (x \cdot \nabla \varphi_n(x)) u^2 dx 
= \lim_{n \to \infty} \int_{\R^d} (V*u^2) (x \cdot \nabla \varphi_n(x)) u^2 dx = 0
\]
and 
\[
\lim_{n \to \infty} \lim_{k \to \infty}- \frac{d}{2}\int_{\mathbb{R}^d } \ab( V * (\varphi_k u^2) ) \varphi_n u^2 dx 
= - \frac{d}{2} \int_{\R^d} (V * u^2) u^2 dx. 
\]
Therefore, 
\[
\begin{aligned}
	&\lim_{n \to \infty} \int_{\R^d} (V *u^2) u v_n dx 
	\\
	=\ & 
	\lim_{n \to \infty} \lim_{k \to \infty} -\frac{1}{2} \int_{\R^d} x_i \ab( \partial_{x_i} V * (\varphi_ku^2) ) \varphi_n u^2 dx 
	- \frac{d}{2} \int_{\R^d} (V*u^2) u^2 dx .
\end{aligned}
\]
The first term on the right hand side is rewritten as 
\begin{equation*}
	\begin{aligned}
		&-\frac{1}{2}\iint_{\mathbb{R}^d\times \mathbb{R}^d} x_i (\partial_{x_i} V )(x-y)\varphi_n(x) u^2(x) \varphi_k(y) u^2(y) dx dy\\
		&=-\frac{1}{2}\iint_{\mathbb{R}^d\times \mathbb{R}^d} W(x-y) u^2(x) u^2(y) \varphi_k(y)\varphi_n(x) dx dy\\
		&\quad -\frac{1}{2}\int_{\mathbb{R}^d} dx \int_{\mathbb{R}^d} dy y_i (\partial_{y_i} V )(x-y)\varphi_n(x) u^2(x) u^2(y) \varphi_k(y) dx dy.
	\end{aligned}
\end{equation*}
Since $W\in L^1(\R^d) + L^\infty(\R^d)$ and $u^2 \in L^r(\R^d)$ for each $r \in [1,\infty]$, we have 
\[
\begin{aligned}
	&\lim_{n \to \infty}\lim_{k \to \infty} -\frac{1}{2}\iint_{\mathbb{R}^d\times \mathbb{R}^d} W(x-y) u^2(x) u^2(y) \varphi_k(y)\varphi_n(x) dx dy 
	\\
	= & - \frac{1}{2} \iint_{\R^d \times \R^d} W(x-y) u^2(x) u^2(y) dx dy. 
\end{aligned}
\]
On the other hand, the integration by parts leads to 
\begin{equation*}
	\begin{aligned}
		& - \frac{1}{2} \int_{\mathbb{R}^d}  y_i (\partial_{y_i} V )(x-y) u^2(y) \varphi_k(y) dy
		\\
		= & \ 
		\frac{1}{2} \int_{\R^d} y_i \partial_{y_i} \ab( V(x-y) ) u^2(y) \varphi_k(y) dy
		\\
		 = & - \frac{d}{2} \int_{\R^d} V(x-y)u^2(y) \varphi_k(y) dy-\int_{\R^d} V(x-y)u(y)y \cdot \nabla u(y) \varphi_k(y) dy
		 \\
		 & - \int_{\R^d} V(x-y) u^2 (y) y \cdot \nabla \varphi_k (y) dy. 
	\end{aligned}
\end{equation*}
From $w(y) \coloneq y \cdot \nabla u(y) \in L^\infty(\R^d)$ due to \cref{l:decay}, Young's inequality yields 
$(V*(wu) ) u^2 \in L^1(\R^d)$. Therefore, 
\[
\begin{aligned}
	&\lim_{n \to \infty}\lim_{k \to \infty} -\frac{1}{2}\int_{\mathbb{R}^d} dx \int_{\mathbb{R}^d} dy y_i (\partial_{y_i} V )(x-y)\varphi_n(x) u^2(x) u^2(y) \varphi_k(y)
	\\
	= \ & - \frac{d}{2} \int_{\R^d} (V * u^2) u^2 dx - \int_{\R^d} \ab( V \ast w u ) u^2 dx
\end{aligned}
\]
and $V(-x) = V(x)$ gives 
\[
\lim_{n \to \infty} \int_{\R^d} (V *u^2) u v_n dx = \int_{\R^d} (V*u^2) uw dx = \int_{\R^d} u^2 (V*uw) dx. 
\]
Thus, 
\begin{equation*}
	\begin{aligned}
		\int_{\R^d } u^2 (V *uw) dx 
		&= \lim_{n \to \infty} \int_{\R^d} (V*u^2) w v_n dx
		\\
		& = 
		- \frac{d}{2} \int_{\R^d} (V*u^2) u^2 dx 
		- \frac{1}{2} \int_{\R^d} (W*u^2) u^2 dx - \frac{d}{2} \int_{\R^d} (V*u^2) u^2 dx
		\\
		&\quad  - \int_{\R^d} (V*wu) u^2 dx,
	\end{aligned}
\end{equation*}
which yields 
\begin{equation}\label{last-term}
	\lim_{n\to\infty} \int_{\R^d} (V*u^2) u v_n dx =  \int_{\R^d} u^2 (V*uw) dx 
	= -\frac{1}{4} \int_{\R^d} (W*u^2) u^2 dx - \frac{d}{2} \int_{\R^d} (V*u^2) u^2 dx. 
\end{equation}
Combining \eqref{pkz_n}, \eqref{1st-t}, \eqref{middle} and \eqref{last-term}, we have the claimed identity
\begin{equation*}
	\frac{2-d}{2} \Vab{\nabla u}_2^2 - \frac{d\lambda}{2} \|u\|^2_2 
	= 
	-g\ab(\frac{1}{4}\int_{\R^d} (W\ast u^2)u^2 dx + \frac{d}{2}\int_{\R^d} (V\ast u^2)u^2 dx ). 
\end{equation*}
This completes the proof. 
\end{proof}

%%%%%%%%%%%%%%%%%%%%%%%%%%%%%%%%%%%%%%%%%%%%%%%%%%%%%%%%%%%%%%%%%%%%%%%%%%%%%%%%%%%%%
%%%%%%%%%%%%%%%%%%%%%%%%%%%%%%%%%%%%%%%%%%%%%%%%%%%%%%%%%%%%%%%%%%%%%%%%%%%%%%%%%%%%%
\section{Proof of \cref{thm:gstar}}
%%%%%%%%%%%%%%%%%%%%%%%%%%%%%%%%%%%%%%%%%%%%%%%%%%%%%%%%%%%%%%%%%%%%%%%%%%%%%%%%%%%%%
%%%%%%%%%%%%%%%%%%%%%%%%%%%%%%%%%%%%%%%%%%%%%%%%%%%%%%%%%%%%%%%%%%%%%%%%%%%%%%%%%%%%%

%%%%%%%%%%%%%%%%%%%%%%%%%%%%%%%%%%%%%%%%%%%%%%%%%%%%%%%%%%%%%%%%%%%%%%%%%%%%%%%%%%%%%
\subsection{Existence of minimizers for $g=g_3^*(m)$}
%%%%%%%%%%%%%%%%%%%%%%%%%%%%%%%%%%%%%%%%%%%%%%%%%%%%%%%%%%%%%%%%%%%%%%%%%%%%%%%%%%%%%

Here we focus on $d=3$ and prove that the transition is of first order via
\begin{proposition}\label{pt_2}
Let $d=3$, $m>0$ and $g=g_3^*(m)$. Then there exists a $u \in S_m$ such that $\cE_{g_3^*} (u) = e(g_3^*,m) = 0$. 
\end{proposition}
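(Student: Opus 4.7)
The plan is to construct a minimizing sequence for $e(g_3^*(m), m) = 0$ by approximating the critical coupling from above, and then extract a strong limit via concentration--compactness. Pick $g_j \searrow g_3^*(m)$; by \cref{thm}, there exist minimizers $u_j \in S_m$ with $\cE_{g_j}(u_j) = e(g_j, m) < 0$. The continuity of $g \mapsto e(g, m)$ from \cref{cont_mon}, combined with the $H^1$-boundedness of $\{u_j\}_j$ (\cref{prop1}) and the bound \eqref{basineqcV}, shows that $\cE_{g_3^*(m)}(u_j) \to 0$, so $\{u_j\}_j$ is a bounded minimizing sequence at the critical coupling. By standard Lagrange multipliers together with \cref{l:decay}, each $u_j$ is an $L^2$-eigenfunction of the Schrödinger operator $H_j := -\Delta - g_j(V * u_j^2)$ with negative eigenvalue $-\lambda_j < 0$.

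The crux of the argument — and, I expect, the main obstacle — is to prevent vanishing of $\{u_j\}_j$, since the strict negativity trick used in the proof of \cref{T:Exming>g^*} is unavailable when the infimum is zero. The idea is to exploit the spectral information provided by the Euler--Lagrange equation. The Cwikel--Lieb--Rozenblum inequality in $\R^3$ bounds the number of negative eigenvalues of $-\Delta - W$ in $L^2(\R^3)$ by $C\|W_+\|_{3/2}^{3/2}$; applied to $H_j$, the existence of at least one negative eigenvalue forces
\[
1 \leq C g_j^{3/2} \|(V * u_j^2)_+\|_{3/2}^{3/2} \leq C g_j^{3/2} \|V * u_j^2\|_{3/2}^{3/2}.
\]
If vanishing held, Lions' lemma would give $u_j \to 0$ in $L^q(\R^3)$ for every $q \in (2, 6)$, and in particular in $L^3$. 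A standard approximation of $V \in L^{3/2}(\R^3)$ by compactly supported bounded functions $V^\eta$, together with Young's inequality (using $\|V^\eta\|_1\|u_j\|_3^2$ on the approximating part and $\|V - V^\eta\|_{3/2}\|u_j\|_2^2$ on the small remainder), would then yield $\|V * u_j^2\|_{3/2} \to 0$, contradicting the CLR lower bound.

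With non-vanishing in hand, after translation and passing to a subsequence, $u_j \rightharpoonup \omega$ weakly in $H^1(\R^3)$ with $\omega \not\equiv 0$. To conclude, I would run the dichotomy step from the proof of \cref{T:Exming>g^*}: assuming $m_1 := \|\omega\|_2^2 \in (0, m)$, the Brezis--Lieb decomposition (still valid since $g_j \to g_3^*(m)$ and $\cV(u_j)$ remains bounded) combined with subadditivity (\cref{sad}) and $e(g_3^*(m), \cdot) \leq 0$ would force $\cE_{g_3^*(m)}(\omega) = e(g_3^*(m), m_1) = 0$, so $\omega$ would be a minimizer of mass $m_1$. But then $\cK(\omega) > 0$ gives $\cV(\omega) = \cK(\omega)/g_3^*(m) > 0$, and evaluating on the rescaling $\sqrt{m/m_1}\,\omega \in S_m$ yields
\[
\cE_{g_3^*(m)}\bigl(\sqrt{m/m_1}\,\omega\bigr) = \tfrac{m}{m_1}\cK(\omega)\bigl(1 - \tfrac{m}{m_1}\bigr) < 0,
\]
contradicting $e(g_3^*(m), m) = 0$. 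Hence $\|\omega\|_2^2 = m$, which upgrades the weak convergence to strong convergence in $L^q(\R^3)$ for $q \in [2, 6)$, gives $\cV(u_j) \to \cV(\omega)$, and by weak lower semicontinuity of $\cK$ identifies $\omega$ as the desired minimizer with $\cE_{g_3^*(m)}(\omega) = 0$.
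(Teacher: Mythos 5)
Your proposal is correct and follows essentially the same route as the paper: approximate $g_3^*(m)$ from above by couplings $g_j$ with minimizers $u_j$, use the Cwikel--Lieb--Rozenblum bound on the number of negative eigenvalues of $-\Delta - g_j(V*u_j^2)$ to rule out vanishing, and then run the standard dichotomy/scaling argument to force $\|\omega\|_2^2 = m$. The only cosmetic difference is that you obtain the negative eigenvalue from the Euler--Lagrange equation and the positivity of the Lagrange multiplier (via \cref{l:decay}), whereas the paper gets it directly by testing the quadratic form with $u_j/\sqrt{m}$ and using $\cE_{g_j}(u_j)<0$, $\cV(u_j)>0$; both are valid.
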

To show that the trapping transition is of first order in $d=3$, we need to prove the existence of a minimizer for $g=g_3^*(m)$, where $e(g_3^*(m),m)=0$. Note that the former proof does not apply as it relied on having $e(g,m)<0$ as in the proof of \cref{lions}. 
We shall overcome this with the connection of the problem to a linear Schrödinger problem, as we shall explain. 

Consider any sequence $\lbrace g_j \rbrace_j$ with $g_j\downarrow g_3^*(m)$. 
By \cref{T:Exming>g^*}, we can find a corresponding sequence $u_j$ of minimizers of $\mathcal{E}_{g_j}$ with $\|u_j\|_2^2=m$, 
and, by \cref{prop1} and \cref{cont_mon}, it forms a minimizing sequence for $\mathcal{E}_{g_3^*(m)}$ and 
$\lbrace u_j \rbrace$ is bounded in $H^1(\R^3)$. 
For each $u_j$, consider the Schrödinger operator 
\begin{equation*}
H_j \coloneq -\frac{1}{2}\Delta-\frac{g_j}{2} V * u^2_j ,
\end{equation*}
with $\Delta$ denoting the $3d$ free laplacian. It is easy to verify, using similar computations as those leading to \cref{prop1}, 
that $H_j$ is well--defined via the Friedrich's extension of the quadratic form in $L^2(\mathbb{R}^3)$
\begin{equation*}
\mathcal{S}_j(\psi) \coloneq \frac{1}{2}\int_{\R^3} |\nabla\psi|^2 dx -\frac{g_j}{2} \int_{\R^3} |\psi|^2  V * u_j^2  dx.
\end{equation*} 
It is well--known that if 
\begin{equation*}
e_j \coloneq \inf_{\|\psi\|_2=1}\mathcal{S}_j(\psi)<0,
\end{equation*}
then $e_j$ is an eigenvalue of $H_j$ (see, for instance, \cite[11.5 THEOREM]{LiebLoss} or \cite[Lemma 8.1.11]{Cazenave1}). 
Since $\cE_{g_j} (u_j) = e(g_j,m) < 0$, it follows that $\cV (u_j) > 0$ and consequently, for all $g_j$ we have 
\begin{equation}\label{lt11}
e_j\leq \mathcal{S}_j\ab( \frac{u_j }{\sqrt{m}})= \frac{1}{m} \cE_{g_j} (u_j) - \frac{g_j}{4m} \int_{\R^3} u_j^2 (V*u_j^2) dx < \frac{1}{m}e(g_j,m)<0.
\end{equation}
Hence for each $j$, $H_j$ has at least one negative eigenvalue. 

By the Lieb--Thirring (or the Cwikel–Lieb–Rozenblum) inequality \cite[Theorem 4.31]{FrankLaptevWeidl}, 
the number of negative eigenvalues of a Schrödinger operator $-\Delta-\tilde{V}$ on $L^2(\mathbb{R}^d)$, $N_e$, is bounded as 
\begin{equation*}
N_e\leq C \int_{\mathbb{R}^d} \ab( \tilde{V}_+)^{d/2} dx
\end{equation*}
provided $d\geq 3$. Hence, by \eqref{lt11} and $g_j \to g_3^*(m) > 0$, 
there is a constant $C>0$, which is independent of $j$, such that for all $j \geq 1$, 
\begin{equation}\label{LT}
 1\leq C \int_{\R^3} \ab( V\ast u_j^2 )_+^{3/2} dx \leq C\int_{\R^3} \pab{V_+  \ast u_j^2}^{3/2} dx.
\end{equation}
%since the integral of the right--hand side is an upper bound on the number of negative eigenvalues of each $H_j$. 
%Eq. \eqref{LT} now essentially ensures that $u^{g_j}$
This bound enables us to argue as in the proof of \cref{lions} in the critical case. 
\begin{lemma}\label{lions_crit}
	Let $d=3$ and $g_j > g^*_3(m)$ satisfy $g_j \to g_3^*(m)$. 
	For each $j$, choose a minimizer $u_j$ of $\mathcal{E}_{g_j}$ with $\|u_j\|_2^2=m$. Let $Q=[0,1]^d$. Then there is a subsequence such that 
\begin{equation}\label{lmm2}
\lim_{j\rightarrow \infty} \sup_{z\in \mathbb{Z}^d} \|u_j\|_{L^2(z+Q)}>0.
\end{equation}
\end{lemma}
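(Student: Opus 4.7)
The plan is to argue by contradiction, mirroring the structure of the proof of \cref{lions} but with the input $e(g,m) < 0$ replaced by the Lieb--Thirring lower bound \eqref{LT}, which already provides uniform non-triviality along the critical sequence. Up to extraction, assume \eqref{lmm2} fails, so
\[
\sup_{z \in \mathbb{Z}^3} \|u_j\|_{L^2(z+Q)} \longrightarrow 0 \quad \text{as } j \to \infty.
\]
Since the constant on the left of \eqref{LT} is $1$ and $g_j \to g_3^*(m) > 0$, it is enough to show that $\|V_+ * u_j^2\|_{3/2} \to 0$, which would force the right-hand side of \eqref{LT} to vanish, a contradiction.

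To carry out this estimate, I would use the density of $C_c(\R^3)$ in $L^{3/2}(\R^3)$ (recall $V_+ \in L^{3/2}(\R^3)$ by \ref{A1}). For any fixed $\varepsilon > 0$, pick a nonnegative $V_\varepsilon \in C_c(\R^3)$ with $\|V_+ - V_\varepsilon\|_{3/2} < \varepsilon$, supported in some ball $B_{R_\varepsilon}(0)$ and bounded by $M_\varepsilon$. Young's inequality with the exponents $(3/2,\,1) \to 3/2$ (using $\|u_j^2\|_1 = m$) gives
\[
\|(V_+ - V_\varepsilon) * u_j^2\|_{3/2} \leq \|V_+ - V_\varepsilon\|_{3/2}\, \|u_j^2\|_1 < \varepsilon m .
\]
For the remaining piece, the vanishing hypothesis upgrades to a \emph{uniform pointwise} bound: covering $B_{R_\varepsilon}(x)$ by $O(R_\varepsilon^3)$ unit-cube translates of $Q$,
\[
(V_\varepsilon * u_j^2)(x) \leq M_\varepsilon \int_{B_{R_\varepsilon}(x)} u_j^2 \, dy
\leq C_3\, M_\varepsilon R_\varepsilon^3 \sup_{z \in \mathbb{Z}^3} \|u_j\|_{L^2(z+Q)}^2 \longrightarrow 0,
\]
hence $\|V_\varepsilon * u_j^2\|_\infty \to 0$. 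Combined with the standing bound $\|V_\varepsilon * u_j^2\|_1 \leq \|V_\varepsilon\|_1 m$ and the elementary interpolation $\|f\|_{3/2} \leq \|f\|_1^{2/3} \|f\|_\infty^{1/3}$, this yields $\|V_\varepsilon * u_j^2\|_{3/2} \to 0$. Summing the two contributions gives $\limsup_j \|V_+ * u_j^2\|_{3/2} \leq \varepsilon m$, and letting $\varepsilon \to 0$ closes the contradiction with \eqref{LT}.

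The main obstacle is that the exponent $3/2$ sits exactly at the critical scaling for $V_+ * u_j^2$: a direct application of Young's inequality with target space $L^{3/2}$ forces one of the two factors into $L^1$, so that the $L^q$-vanishing of $u_j$ for $q \in (2,6)$ (which Lions' lemma would extract from the hypothesis) cannot be exploited through Young's alone. The way out is the splitting into a piece that is small in $L^{3/2}$ (handled by Young's) and a piece that is bounded with compact support, for which vanishing delivers a \emph{pointwise} $L^\infty$ decay of the convolution that, through cheap $L^1$--$L^\infty$ interpolation, returns the desired $L^{3/2}$ bound and thus bypasses the critical Young barrier.
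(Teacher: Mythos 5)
Your proof is correct, and its overall architecture---contradiction, plus a two--piece splitting of $V_+$ into a part that is small in $L^{3/2}(\R^3)$ (disposed of by Young's inequality against $\|u_j^2\|_1=m$) and a well--behaved remainder---coincides with the paper's. The difference lies in how the well--behaved piece is handled, and in the claim motivating your detour. The paper first applies Lions' lemma to the contradiction hypothesis to obtain $\|u_j\|_q\to0$ for $q\in(2,6)$, truncates $V_+$ to $V_+^R=\chi_{B_R(0)}V_+\in L^1(\R^3)$ (Hölder suffices for the $L^1$ membership), and then uses Young's inequality in the form $\|V_+^R\ast u_j^2\|_{3/2}\le\|V_+^R\|_1\,\|u_j^2\|_{3/2}=\|V_+^R\|_1\,\|u_j\|_3^2\to0$. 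So the ``critical Young barrier'' you describe is not actually there: Young's inequality does let you exploit the $L^q$--vanishing, provided you place the truncated potential, rather than $u_j^2$, in $L^1$. Your alternative replaces this step by the direct covering estimate $\|V_\varepsilon\ast u_j^2\|_\infty\le C\,M_\varepsilon R_\varepsilon^3\,\sup_{z\in\mathbb{Z}^3}\|u_j\|^2_{L^2(z+Q)}\to0$ followed by the interpolation $\|f\|_{3/2}\le\|f\|_1^{2/3}\|f\|_\infty^{1/3}$; this is equally valid and slightly more self--contained, since it bypasses Lions' lemma altogether and works straight from the vanishing of the local $L^2$ masses, at the modest cost of an extra approximation of $V_+$ by a bounded compactly supported function. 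Both routes then contradict \eqref{LT} in the same way.
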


\begin{proof}
Suppose that \eqref{lmm2} does not hold. Then, by Lions' Lemma, after taking a subsequence (still denoted by $\lbrace u_j \rbrace$), 
\begin{equation}\label{lions2}
	\lim_{j\to\infty}\|u_{j}\|_q=0.
\end{equation}
for all $q\in (2,6)$. Let us choose $R>0$ and write 
\begin{equation*}
	V^R_+:=\chi_{B_R(0)} V_+, \quad V^{R,c}_+:=V_+-V^R_+.
\end{equation*}
Since $V_+\in L^{3/2}(\mathbb{R}^3)$, $\lim_{R\to \infty}\|V_+^{R,c}\|_{3/2}=0$ and $V_+^R \in L^1(\mathbb{R}^3)$ for any $R>0$.
By Young's inequality,
\begin{equation*}
	\begin{aligned}
		&\|V^R_+\ast |u_j|^2\|_{3/2}\leq \|V_+^R\|_1 \|u_j\|_3^2,\\
		&\|V^{R,c}_+\ast |u_j|^2\|_{3/2}\leq \|V_+^{R,c}\|_{3/2} \|u_j\|_2^2=\|V_+^{R,c}\|_{3/2}m\\
	\end{aligned}
\end{equation*}
and thus \eqref{lions2} leads to 
\begin{equation*}\label{bd_vplus}
	\limsup_{j \to \infty}\int_{\R^3} (V_+\ast |u_j|^2)^{3/2} dx 
	\leq  
	\limsup_{j \to \infty} \Bab{\|V_+^R\|_1 \|u_j\|_3^2+\|V_+^{R,c}\|_{3/2}m} 
	= \| V_+^{R,c} \|_{3/2} m.
\end{equation*}
By letting $R \to \infty$, we obtain 
\[
\int_{\R^3} (V_+ * u_j^2 )^{3/2} dx \to 0.
\]
However, this and \eqref{LT} give the following contradiction:
\[
1 \leq C \limsup_{j \to \infty} \int_{\R^3} \pab{  V_+ * u_j^2}^{3/2} dx = 0.
\]
Therefore \eqref{lmm2} holds.
\end{proof}

Now we complete the proof of \cref{pt_2}. 
We proceed as in the proof of \cref{T:Exming>g^*}: up to translations and subsequences, 
\cref{lions_crit} and the Rellich--Kondrashov theorem ensure that $u_j \rightharpoonup \omega_c \not \equiv 0$ weakly in $H^1(\R^3)$. 
Assume that $\|\omega_c\|_2^2<m$. 
With $v_j \coloneq u_j - \omega_c$, 
%Taking a subsequence if necessary, we get 
% $\|u_j\|_{L^2(B_{\theta j})}\to \|\omega_c\|_2$ for all $\theta\in[1,4]$. 
%With the functions $v_j$ and $w_j$ as defined in \eqref{psis_c}, 
a similar argument to the proof of \cref{T:Exming>g^*} yields (see \eqref{e12}) 
\begin{equation}\label{sum_k}
0=e(g^*_3(m), m) = \lim_{j \to \infty} \cE_{g_j} (u_j) \geq  \mathcal{E}_{ g^*_3(m) }(\omega_c)+ \liminf_{j \to \infty} \mathcal{E}_{g_j}(v_j).
\end{equation}
%Furthermore, since $v_j \rightharpoonup \omega_c$ with $\Vab{v_j}_2 \to \Vab{\omega_c}_2$, 
%$v_j \to \omega_c$ strongly in $L^2(\R^d)$ and the interpolation inequality gives 
%$v_j \to \omega_c$ strongly in $L^q(\R^d)$ for every $q \in [2,6)$. Therefore, 
%\begin{equation*}
%\lim_{j\to\infty} g_j \cV (v_j) = g^*_3(m) \mathcal{V}(\omega_c). 
%\end{equation*}
This and \cref{sad} imply 
\begin{equation}\label{lims_of_vw}
\begin{aligned}
&\mathcal{E}_{g_3^*(m)} (\omega_c) 
\geq e( g_3^*(m) , \Vab{\omega_c}_2^2  ) \geq e ( g_3^*(m) , m  ) = 0,  \\
& \liminf_{j \to \infty} \mathcal{E}_{g_j}(v_j)\geq e(g_3^*(m),m-\|\omega_c\|_2^2) \geq e ( g_3^*(m) , m  ) =  0.
\end{aligned}
\end{equation}
Thus, \eqref{sum_k} and \eqref{lims_of_vw} yield
\begin{equation*}
\lim_{j\to \infty} \mathcal{E}_{g_j}(v_j)=\mathcal{E}_{g_3^*(m)}(\omega_c)=0.
\end{equation*}
By $\omega_c \not \equiv 0$ and $\cK(\omega_c) >0$ with $\cE_{g^*_3(m)}(\omega_c) = 0$, 
$\cV(\omega_c) > 0$ follows. 
Since we have assumed $\|\omega_c\|_2^2<m$, for $\tau \coloneq \sqrt{m} / \|\omega_c\|_2>1$, 
\begin{equation*}
\begin{aligned}
0=e(g_3^*(m),m)\leq &\mathcal{E}_{g_3^*(m)}(\tau \omega_c)=\tau^2 \mathcal{E}_{g_3^*(m)}(\omega_c)+(\tau^2-\tau^4)g_3^*(m)\mathcal{V}(\omega_c) < 0,
\end{aligned}
\end{equation*}
which is a contradiction. Hence, $\|\omega_c\|_2^2=m$. 
From $u_j\to \omega$ weakly in $H^1(\mathbb{R}^3)$ and $\Vab{u_j}_2^2 \to \Vab{\omega_c}_2^2$, 
we have $\|u_j-\omega_c\|_2\to0$ and by \eqref{sum_k} and \eqref{lims_of_vw},
\begin{equation*}
0=\lim_{j\to \infty} \cE_{g_j}(u_j)\geq \liminf_{j\to \infty} \mathcal{E}_{g_j}(v_j)\geq \mathcal{E}_{g_3^*(m)}(\omega_c)\geq 0
\end{equation*}
and $\omega_c$ is a minimizer. This completes the proof of \cref{pt_2}.

%%%%%%%%%%%%%%%%%%%%%%%%%%%%%%%%%%%%%%%%%%%%%%%%%%%%%%%%%%%%%%%%%%%%%%%%%%%%%%%%%%%%%
\subsection{Non--existence of minimizers for $g=g_2^*(m)$}
%%%%%%%%%%%%%%%%%%%%%%%%%%%%%%%%%%%%%%%%%%%%%%%%%%%%%%%%%%%%%%%%%%%%%%%%%%%%%%%%%%%%%

We complete the proof of \cref{thm:gstar} by showing that the transition is of second order in $d=2$. 
\begin{proposition}\label{pt_3}
Let $d=2$, $m>0$ and $g=g_2^*(m)$. Assume {\rm \ref{A1}}, {\rm \ref{A2}} and {\rm \ref{B1}--\ref{B4}}. 
Then there is no minimizer for $e(g^*_2(m), m)=0$. 
\end{proposition}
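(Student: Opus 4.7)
My plan is to argue by contradiction. Suppose $u \in S_m$ is a minimizer with $\cE_{g_2^*(m)}(u)=0$. By \ref{B2}, $V$ is radial and non-increasing, and therefore $V \geq 0$ (a radial non-increasing function in $L^1(\R^2)$ cannot take negative values without making its tail non-integrable). Replacing $u$ first by $\vab{u}$ and then by its Schwarz symmetrization $u^*$, Riesz's rearrangement inequality gives $\cV(u^*) \geq \cV(u)$ while P\'olya--Szeg\H{o} gives $\cK(u^*) \leq \cK(u)$; hence $\cE_{g_2^*(m)}(u^*) \leq 0 = e(g_2^*(m),m)$, so $u^*$ is also a minimizer. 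I may therefore assume $u$ is non-negative, radial and non-increasing, and then \cref{positivity} yields $u > 0$ on $\R^2$. The hypotheses of \cref{p:Poho} are met since \ref{B2} gives $V(-x) = V(x)$ and \ref{B3} gives $W \in L^1(\R^2) \subset L^1 + L^\infty$.

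The core step combines three identities. The Pokhozaev identity \eqref{Poho} at $d=2$ reads
\[
\lambda m = \frac{g}{4}\int_{\R^2} u^2 (W\ast u^2)\,dx + g \int_{\R^2} u^2 (V\ast u^2)\,dx;
\]
testing the Euler--Lagrange equation against $u$ yields $\Vab{\nabla u}_2^2 + \lambda m = g\int u^2(V\ast u^2)\,dx$; and the threshold condition $\cE_{g_2^*(m)}(u)=0$ yields $\Vab{\nabla u}_2^2 = \tfrac{g}{2}\int u^2(V\ast u^2)\,dx$. Eliminating $\lambda m$ and $\Vab{\nabla u}_2^2$ between these three relations produces the key identity
\[
\iint_{\R^2\times\R^2} u^2(x)\, K(x-y)\, u^2(y)\,dx\,dy = 0, \qquad K(z) \coloneq V(z) + \tfrac{1}{2}W(z).
\]
By \ref{B1}, $K$ is continuous; by \ref{B2}, it is radial; by \ref{A2}, $K(0) = V(0) \geq V_0 > 0$; and by \ref{B4} the only zero locus of $K$ is the sphere $\vab{z}=r^*$. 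Hence $K>0$ on $B_{r^*}(0)$ and $K$ has constant sign on $\R^2 \setminus \overline{B_{r^*}(0)}$. A cutoff integration by parts, legitimate because \ref{B3} supplies $V, W \in L^1(\R^2)$, yields $\int W = -d\int V$; the resulting cancellation
\[
\int_{\R^2} K\,dz = \int V - \int V = 0
\]
is specific to the critical dimension $d=2$, and it forces $K<0$ on $\R^2 \setminus \overline{B_{r^*}}$.

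To conclude, set $\mu \coloneq u^2 \ast u^2$. Then $\mu \in L^1(\R^2) \cap C(\R^2)$ with $\int \mu = m^2$, $\mu(0) = \Vab{u}_4^4 > 0$, and $\mu$ is radial non-increasing (by the layer-cake decomposition, since the convolution of characteristic functions of two concentric balls is radial non-increasing). Splitting the key identity at $\vab{z}=r^*$: on $B_{r^*}$ one has $K>0$ and $\mu \geq \mu(r^*)$, while on the complement $K<0$ and $\mu \leq \mu(r^*)$; in both regions $K\mu \geq K\mu(r^*)$ pointwise. Therefore
\[
0 = \int_{\R^2} K\mu\,dz \;\geq\; \mu(r^*)\int_{\R^2} K\,dz = 0.
\]
Equality throughout forces $\mu(z) = \mu(r^*)$ for a.e. $z$ with $K(z)\neq 0$, hence a.e.\ on $\R^2$, and then $\mu$ is constant everywhere by continuity. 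A constant function in $L^1(\R^2)$ must vanish, contradicting $\mu(0) > 0$. The main obstacle is assembling the key integral identity and recognizing the $d=2$ cancellation $\int K = 0$; once these are in place, \ref{B4} supplies precisely the sign-changing structure needed to turn the radial monotonicity of $\mu = u^2\ast u^2$ into a contradiction.
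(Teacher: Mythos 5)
Your proposal is correct and follows essentially the same route as the paper: reduce to a non-negative radial non-increasing minimizer, combine the Pokhozaev identity with the Euler--Lagrange relation and $\cE_{g_2^*(m)}(u)=0$ to get $\iint u^2\,(V+\tfrac12 W)(x-y)\,u^2=0$, note that $\int_{\R^2}(V+\tfrac12 W)=0$ and use \ref{B4} for the sign structure, and exploit the radial monotonicity of $u^2\ast u^2$ to reach a contradiction. The only (harmless) difference is at the very end: the paper derives a strict inequality $\int Fh>0$ directly from $h>0$ and $h\to 0$ at infinity, whereas you obtain $\int K\mu\geq 0$ and then rule out the equality case by showing $\mu$ would have to be a nonzero constant in $L^1(\R^2)$.
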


To prove \cref{pt_3}, we first prove the following simple result: 
\begin{proposition}\label{radial}
	Let $d \in \N$,  $f \colon \mathbb{R}^d\rightarrow \mathbb{R}$ and $g \colon \mathbb{R}^d\rightarrow \mathbb{R}$ be non-negative, 
	Lebesgue measurable, vanishing at infinity and radial. 
	Assume that $f=f^*$ and $g=g^*$, where $^*$ denotes the symmetric-decreasing rearrangement, and that  
	\begin{equation*}
		h(x)=\int_{\R^d} f(x-y)g(y) d y
	\end{equation*} 
	is well-defined. Then $h$ is radial, non-negative and non-increasing, i.e., $h=h^*$. 
\end{proposition}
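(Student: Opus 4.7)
The plan is to establish the three properties in turn. Non-negativity $h \geq 0$ is immediate from $f,g \geq 0$. For radiality, I would invoke the rotational invariance of Lebesgue measure: for any rotation $R \in O(d)$, the substitution $y = Rz$ together with the radiality of $f$ and $g$ yields
\[
h(Rx) = \int_{\R^d} f(Rx-y) g(y) \, dy = \int_{\R^d} f(R(x-z)) g(Rz) \, dz = \int_{\R^d} f(x-z) g(z) \, dz = h(x).
\]

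The substantive step is to show that the radial function $h$ is non-increasing in $|x|$, i.e.\ $h = h^*$. For this I would apply the Riesz rearrangement inequality: for any measurable $E \subset \R^d$ with $|E|<\infty$, letting $E^*$ denote the ball centered at $0$ with $|E^*|=|E|$, and using $f=f^*$ and $g=g^*$,
\[
\int_E h\,dx = \iint_{\R^d \times \R^d} \chi_E(x) f(x-y) g(y) \, dx\, dy \leq \iint_{\R^d \times \R^d} \chi_{E^*}(x) f(x-y) g(y) \, dx\, dy = \int_{E^*} h\,dx.
\]

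To conclude $h=h^*$ I argue by contradiction. Suppose for some $t>0$ with $|\{h>t\}|<\infty$, the set $E \coloneq \{h > t\}$ differs from $E^*$ on a set of positive measure. Since $|E|=|E^*|$, we have $|E\setminus E^*| = |E^*\setminus E|>0$; moreover $h > t$ on $E\setminus E^*$ and $h\leq t$ on $E^*\setminus E$. By continuity of measure applied to the sets $\{t < h \leq t+\e\}$ as $\e\to 0^+$, the subset $(E\setminus E^*)\cap\{h>t+\e\}$ has positive measure for some small $\e>0$, yielding the strict bound $\int_{E\setminus E^*} h > t|E\setminus E^*| \geq \int_{E^*\setminus E} h$. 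This forces $\int_E h > \int_{E^*} h$, contradicting the Riesz inequality. Therefore every superlevel set of $h$ is a ball centered at $0$, and together with radiality this gives $h = h^*$.

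The only delicate point is justifying finiteness of the superlevel sets $\{h>t\}$, a mild integrability issue inherent in the setup since $f,g$ vanish at infinity and $h$ is assumed well-defined. An alternative route bypassing this entirely is the layer-cake decomposition $f = \int_0^\infty \chi_{B_{r(s)}}\,ds$, $g = \int_0^\infty \chi_{B_{\rho(t)}}\,dt$ combined with Fubini, which reduces the claim to showing that $\chi_{B_r} * \chi_{B_\rho}$ is symmetric-decreasing for all $r,\rho>0$. Since this convolution equals the map $x \mapsto |B_r(0) \cap B_\rho(x)|$, the result follows from the elementary geometric fact that the volume of intersection of two balls is non-increasing in the distance between their centers.
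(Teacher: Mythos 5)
Your proposal is correct, and your \emph{primary} argument is genuinely different from the paper's: you deduce $h=h^*$ from the Riesz rearrangement inequality applied to superlevel sets of $h$, together with a strict-inequality exchange argument showing that any superlevel set differing from a centred ball would violate $\int_E h\le\int_{E^*}h$. The paper instead uses exactly what you offer as your ``alternative route'': the layer-cake representation of $f$ and $g$, Fubini, and the elementary fact that $x\mapsto\mathrm{Vol}(B_r(x)\cap B_{r'}(0))$ is radial and non-increasing in $\vab{x}$, so that $h$ is an integral of symmetric-decreasing functions. The comparison is instructive. The layer-cake route is self-contained, uses only Tonelli and a geometric observation, and — as you note — completely sidesteps integrability questions, since everything is non-negative. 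Your Riesz route is shorter if one takes the Riesz inequality as given, but it carries two delicate points: first, the one you flag, namely that $\{h>t\}$ must have finite measure for every $t>0$ (i.e.\ $h$ must itself vanish at infinity, which is not literally among the hypotheses and requires a small argument from $f,g$ vanishing at infinity and $h$ being finite); second, one you do not flag, namely that your strict inequality $\int_E h>\int_{E^*}h$ only yields a contradiction if $\int_{E^*}h<\infty$ — if $\int_{E\cap E^*}h=+\infty$ both sides are infinite and the Riesz inequality is not violated, so some local integrability of $h$ must also be secured. Since your alternative route is complete and is precisely the paper's proof, the proposal as a whole stands; I would simply promote the layer-cake argument to the main proof and record the Riesz argument as the (conditionally valid) alternative.
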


\begin{proof}
The	nonnegativity of $h$ is clear. To prove $h=h^*$, define
	\begin{equation}\label{ball}
		v_{r,r'}(x) \coloneq \mathrm{Vol}\left\lbrace B_r(x)\cap B_{r'}(0)\right\rbrace
	\end{equation} 
	where $B_r(y)$ denotes the open ball centered at $y\in \mathbb{R}^d$ of radius $r$. 
	The function $v_{r,r'}$  is clearly radial and  non-increasing in $|x|$ for all $r,r'$. By the layer cake representation %(see Lieb--Loss, Thm. 1.13) 
	\cite[1.13 THEOREM]{LiebLoss}
	and Fubini's Theorem, we have 
	\begin{equation*}
		h(x)=\int_0^\infty dt \int_0^\infty ds \int_{\R^d} dy \chi_{ \lbrace f>t \rbrace}(x-y)\chi_{ \lbrace g>s \rbrace}(y). 
	\end{equation*}
	Since $f=f^*$ and $g=g^*$, 
	level sets $\lbrace f>t \rbrace$ and $\lbrace g>s \rbrace$ are open balls located at the origin 
	(see \cite[Chapter 3]{LiebLoss}). 
	Hence 
	\begin{equation*}
		\int_{\R^d} \chi_{\lbrace f>t \rbrace }(x-y)\chi_{\lbrace g>s \rbrace }(y) d y
	\end{equation*} 
	is, for all $t, s$ in $\mathbb{R}_{\geq 0}$, of the form \eqref{ball} for some $r, r'$. 
	Since $v_{r,r'}$ is radial and non-increasing, so is $h$ and this completes the proof. 
\end{proof}

Now we give a proof of \cref{pt_3}:

\begin{proof}[Proof of \cref{pt_3}]
We argue by contradiction and suppose that $u_0 \in S_m$ is a minimizer for $e(g^*_2(m), m)$. 
Remark that by replacing $u_0$ to $\vab{u_0}$ we may assume $u_0 \geq 0$ without loss of generality. 
Furthermore, \ref{A1} and \ref{B2} imply $V=V^*$. Since $u_0^* \in S_m$, 
the strict Riesz rearrangement inequality (\cite[3.7 THEOREM]{LiebLoss}) yields $\cE_{g^*_2(m)} (u_0^*) \leq \cE_{g^*_2(m)} (u_0)$. 
Thus, we also may suppose that $u_0$ is radially symmetric and non-increasing. Recall also 
the Euler--Lagrange equation: 
\[
-\Delta u_0 + \lambda_0 u_0 = g^*_2(m) (V * u_0^2) u_0 \quad \text{in} \ \R^2. 
\]

Let us denote 
\begin{equation*}
F(x):=\frac{1}{2}V(x)+\frac{1}{4}W(x)
\end{equation*} 
where $W(x)=x \cdot \nabla V(x)$. 
From \ref{A1}, \ref{B1} and \ref{B3}, integration by parts gives 
\begin{equation*}
\int_{\mathbb{R}^2} F dx =0. 
\end{equation*}
By \ref{B4} and $V(0) = \Vab{V}_\infty$ due to \ref{B2}, $F(x)\geq 0$ for $|x|\leq r^*$ and $F(x)<0$ for $|x|>r^*$.
%To obtain $u\equiv 0$, 
Moreover, when $g=g_2^*(m)$, then $e(g_2^*(m),m)=0$ and hence 
\begin{equation*}
	\|\nabla u_0\|_2^2=\frac{g_2^*(m)}{2} \int_{\R^2} (V*u_0^2) u_0^2 dx, \quad 
	\lambda_0 \Vab{u_0}_2^2 = \frac{g_2^*(m)}{2} \int_{\R^2} (V*u^2_0) u_0^2 dx .
\end{equation*}
The Pokhozaev identity \eqref{Poho} hence reduces in this case to
\begin{equation}\label{pokh_2}
	\int_{\mathbb{R}^2}(F\ast u_0^2)u_0^2 dx =0.
\end{equation}
Notice that Fubini's theorem implies 
\begin{equation*}
\int_{\mathbb{R}^2} (F\ast u_0^2)u_0^2dx =\int_{\mathbb{R}^2} F( u_0^2\ast u_0^2)dx, 
\end{equation*}
Define $h \coloneq u_0^2 \ast u_0^2$. 
By \cref{radial}, $h$ is radial and non--increasing, and thus 
\begin{equation}\label{infs}
\inf_{|x|<r^*} h = \sup_{|x|>r^*} h.
\end{equation}
We also remark that $h>0$ since $u_0>0$ thanks to \cref{positivity}. 
Moreover, we have already observed that $\int_{\mathbb{R}^2} F dx =0$, and hence
\begin{equation}\label{ints}
\int_{|x|<r^*} F dx =\int_{\vab{x}>r^*} |F| dx .  
\end{equation}
Thus, we can estimate 
\begin{equation}\label{est}
\begin{aligned}
\int_{\R^2} F h dx &= \int_{|x|< r^*} F h dx -\int_{|x|> r^*} |F| h dx 
\\
&\geq \ab (\inf_{|x|< r^*} h )\int_{|x|<r^*}F dx  -\int_{|x|>r^*} |F| h dx
\\\
& > \ab (\inf_{|x|<r^*} h ) \int_{|x|<r^*}F dx  - \ab(\sup_{|x|>r^*} h ) \int_{|x|>r^*} |F| dx 
\\
&= \ab (\inf_{|x|<r^*} h - \sup_{|x|>r^*} h) \int_{\vab{x}<r^*} F dx = 0,
\end{aligned}
\end{equation}
where we used \eqref{ints} and then \eqref{infs}. 
Note that the second inequality in \eqref{est} is strict since $0<h$ in $\R^2$ and $h(x) \to 0$ as $\vab{x} \to \infty$. 
Thus, $\int_{\R^2} Fh dx >0$ while on the other hand $\int_{\R^2} Fh dx=0$ by \eqref{pokh_2}. This is a contradiction 
and any minimizer for $\mathcal{E}_{g_2^*(m)}$ with $\|u\|_2^2=m$ cannot exist. This proves \cref{pt_3}.
\end{proof}

\cref{thm:gstar} now follows from \cref{pt_2,pt_3}.

%%%%%%%%%%%%%%%%%%%%%%%%%%%%%%%%%%%%%%%%%%%%%%%%%%%%%%%%%%%%%%%%%%%%%%%%%%%%%%%%%%%%%
%%%%%%%%%%%%%%%%%%%%%%%%%%%%%%%%%%%%%%%%%%%%%%%%%%%%%%%%%%%%%%%%%%%%%%%%%%%%%%%%%%%%%
\section{Proof of \cref{thm:2ndsol-nonex}}
%%%%%%%%%%%%%%%%%%%%%%%%%%%%%%%%%%%%%%%%%%%%%%%%%%%%%%%%%%%%%%%%%%%%%%%%%%%%%%%%%%%%%
%%%%%%%%%%%%%%%%%%%%%%%%%%%%%%%%%%%%%%%%%%%%%%%%%%%%%%%%%%%%%%%%%%%%%%%%%%%%%%%%%%%%%

%%%%%%%%%%%%%%%%%%%%%%%%%%%%%%%%%%%%%%%%%%%%%%%%%%%%%%%%%%%%%%%%%%%%%%%%%%%%%%%%%%%%%
\subsection{Nonexistence of critical points}
%%%%%%%%%%%%%%%%%%%%%%%%%%%%%%%%%%%%%%%%%%%%%%%%%%%%%%%%%%%%%%%%%%%%%%%%%%%%%%%%%%%%%

This subsection is devoted to proving the third assertion in \cref{thm:2ndsol-nonex}, namely, 
the nonexistence of any critical point $u$ of $\cE_{g}|_{S_m}$ with $\pm u \geq 0$ or $u(x) = u(\vab{x})$ when $d=3$ and $g >0$ is small enough. 

\begin{proof}[Proof of \cref{thm:2ndsol-nonex},  assertion 2]
Let $g > 0$ and $u \in S_m$ be any critical point of $\cE_g|_{S_m}$ with $\pm u \geq 0$ or $u(x) = u(\vab{x})$. 
Then there exists the Lagrange multiplier $\lambda \in \R$ such that 
\[
-\Delta u + \lambda u = g (V * u^2) u \quad \text{in} \ \R^3
\]
and according to \cref{p:Poho} $u$ satisfies the Pokhozaev identity: 
\[
\frac{1}{2} \Vab{\nabla u}_2^2 + \frac{3}{2} \lambda \Vab{u}_2^2 = \frac{g}{4} \int_{\R^3} (W * u^2) u^2 dx + \frac{3}{2} g \int_{\R^3} (V*u^2)u^2 dx.
\]
From $\Vab{\nabla u}_2^2 + \lambda \Vab{u}_2^2 = g \int_{\R^3} (V*u^2)u^2 dx$ it follows that 
\[
\Vab{\nabla u}_2^2 = - \frac{g}{4} \int_{\R^3} (W*u^2)u^2 dx. 
\]
Young's and Sobolev's inequality give 
\[
\vab{\int_{\R^3} (W*u^2) u^2 dx } 
\leq \Vab{u}_{2}^2 \Vab{W}_{3/2} \Vab{u}_6^2 \leq C_S \Vab{W}_{3/2} m \Vab{\nabla u}_2^2,
\]
which implies 
\[
\Vab{\nabla u}_2^2 \leq \frac{C_S m g}{4} \Vab{W}_{3/2} \Vab{\nabla u}_2^2. 
\]
Since $g$ and $u$ are arbitrary, if 
\[
0 < g < g_2 \coloneq \frac{4}{C_Sm \Vab{W}_{3/2} },
\]
then $\nabla u \equiv 0$ and $\cE_g|_{S_m}$ does not admit any critical point $u$ with $u \geq 0$ or $u(x) = u(\vab{x})$. 
This completes the proof. 
\end{proof}

%%%%%%%%%%%%%%%%%%%%%%%%%%%%%%%%%%%%%%%%%%%%%%%%%%%%%%%%%%%%%%%%%%%%%%%%%%%%%%%%%%%%%
\subsection{Existence of positive energy solution}
%%%%%%%%%%%%%%%%%%%%%%%%%%%%%%%%%%%%%%%%%%%%%%%%%%%%%%%%%%%%%%%%%%%%%%%%%%%%%%%%%%%%%

In this subsection, we prove the first 
%and second assertion 
in \cref{thm:2ndsol-nonex}.  
%For the first one, 
We proceed through the mountain pass theorem as in  \cite{Jeanjean,BMRV21,ikomatanaka,MoRiVe22}.
%, while the second relies on  \cite{JeLu22a}. 
Throughout this subsection, the following conditions are always assumed:  
\begin{equation}\label{assump}
\text{$d=3$, $m>0$\ref{A1} and \ref{A2}}, V\quad \text{radial}. 
\end{equation}
%The following Lemma is of importance in both cases. 

%We start with the proof of the first assertion. For its purpose, we also assume $V$ is radial. 

Define 
\[
H^1_r (\R^3) \coloneq \Set{ u \in H^1(\R^3) | \text{$u$ is radial} }, \quad S_{m,r} \coloneq S_m \cap H^1_r(\R^3)
\]
and set 
\[
e_r(g,m) \coloneq \inf_{u \in S_{m,r}} \cE_g(u). 
\]

\begin{remark}\label{r:e=e_r}
	By definition, $e(g,m) \leq e_r(g,m)$ holds. 
	When $V$ is non-increasing, by the symmetric-decreasing rearrangement and the Riesz rearrangement inequality, 
	$\cE_g(u^*) \leq \cE_g(u)$ holds for any $u \in S_m$, which yields $e_r(g,m) = e(g,m)$ for each $g>0$ and $m>0$. 
\end{remark}

Notice that under \eqref{assump}, \cref{prop1} and \cref{cont_mon,l:gstar} with $e_r(g,m)$ hold true. 
Thus, there exists $ g^*_{3,r} =  g^*_{3,r} (m) > 0$ such that $e_r(g,m) < 0$ is equivalent to $g> g^*_{3,r} (m)$. 
Furthermore, \cref{sad2} also holds with $e_r(g,m)$. 
Then we may show that $e_r(g,m)$ is attained if $g\geq g^*_{3,r}$:

\begin{lemma}\label{l:radmini}
	Suppose \eqref{assump}. 
	If $g>g^*_{3,r} (m)$, then there exists $u \in S_{m,r}$ such that $\cE_g(u) = e_r(g,m) < 0$. 
	Furthermore, when $g=g^*_{3,r}$, there exists $v \in S_{m,r}$ such that $\cE_{g^*_{3,r}} (v) = e_r(g^*_{3,r} , m ) = 0$. 
\end{lemma}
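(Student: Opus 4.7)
The plan is to exploit the compact embedding $H^1_r(\R^3) \hookrightarrow L^q(\R^3)$ for $q \in (2,6)$ in place of the translation--based concentration--compactness used for $e(g,m)$, while recycling the Cwikel--Lieb--Rozenblum input of \cref{pt_2} to rule out vanishing at the threshold.

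For $g>g^*_{3,r}(m)$, so that $e_r(g,m) < 0$, let $\{u_n\}_n \subset S_{m,r}$ be a minimizing sequence. \cref{prop1} gives boundedness in $H^1(\R^3)$, and along a subsequence $u_n \rightharpoonup u$ weakly in $H^1_r$; by radial compactness $u_n \to u$ strongly in every $L^q$ with $q \in (2,6)$. Using \eqref{basineqcV} this yields $\cV(u_n) \to \cV(u)$, and combined with the weak lower semicontinuity of $\cK$ we obtain $\cE_g(u) \leq e_r(g,m) < 0$; in particular $u \not\equiv 0$. If $\tilde m \coloneq \Vab{u}_2^2 < m$, setting $t \coloneq m/\tilde m > 1$ gives $\sqrt{t}\, u \in S_{m,r}$ and
\[
\cE_g(\sqrt{t}\, u) = t^2 \cE_g(u) + t(1-t) \cK(u) < t^2\, e_r(g,m) < e_r(g,m),
\]
contradicting the definition of $e_r(g,m)$ (the first inequality uses $\cK(u)>0$ and $t>1$, the second uses $e_r(g,m)<0$). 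Hence $\Vab{u}_2^2 = m$ and $u$ is the desired minimizer.

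At $g = g^*_{3,r}(m)$, where $e_r = 0$, I would pick $g_j \searrow g^*_{3,r}(m)$ and let $u_j \in S_{m,r}$ be a radial minimizer for $e_r(g_j,m) < 0$ produced above. The sequence is bounded in $H^1_r$, so $u_j \rightharpoonup v$ weakly in $H^1_r$ and strongly in $L^q$ for each $q \in (2,6)$. To rule out $v \equiv 0$, I would run the Cwikel--Lieb--Rozenblum argument as in \cref{pt_2}: the operator $H_j \coloneq -\tfrac12 \Delta - \tfrac{g_j}{2}(V\ast u_j^2)$ has at least one negative eigenvalue (as in \eqref{lt11}), so
\[
1 \leq C \int_{\R^3} (V_+ \ast u_j^2)^{3/2} dx,
\]
and $v \equiv 0$ together with $u_j \to 0$ in $L^3$ would force the right--hand side to vanish via the splitting $V_+ = V_+^R + V_+^{R,c}$ (exactly as in \cref{lions_crit}), a contradiction. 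Continuity of $g \mapsto e_r(g,m)$ from \cref{cont_mon}, together with $\cV(u_j) \to \cV(v)$, then gives $\cE_{g^*_{3,r}}(v) \leq \liminf_{j\to\infty} \bigl( \cE_{g_j}(u_j) + (g_j - g^*_{3,r})\cV(u_j) \bigr) = 0$. If $\Vab{v}_2^2 < m$ held, then $\cK(v) > 0$ and $\cE_{g^*_{3,r}}(v) \leq 0$ would force $\cV(v) > 0$, and rescaling $\sqrt{t}\, v$ with $t = m/\Vab{v}_2^2 > 1$ would yield $\cE_{g^*_{3,r}}(\sqrt{t}\, v) = t^2 \cE_{g^*_{3,r}}(v) + t(1-t)\cK(v) < 0$, contradicting $e_r(g^*_{3,r},m) = 0$. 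Hence $v \in S_{m,r}$ attains $e_r(g^*_{3,r},m)=0$.

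The main obstacle is the critical case: without the Cwikel--Lieb--Rozenblum bound, the minimizers $u_j$ could in principle spread out and lose all their mass to infinity, since at $e_r = 0$ the simple energy argument behind \cref{lions} is no longer available. The remaining pieces are routine adaptations of the subcritical--existence argument, with the radial embedding substituting for concentration--compactness.
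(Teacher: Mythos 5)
Your proposal is correct and follows essentially the same route as the paper: compactness of the radial embedding $H^1_r(\R^3)\hookrightarrow L^q(\R^3)$ for the subcritical case, and the Cwikel--Lieb--Rozenblum bound from \cref{pt_2}/\cref{lions_crit} to exclude vanishing at $g=g^*_{3,r}(m)$. The only cosmetic difference is that you inline the scaling argument $\cE_g(\sqrt{t}\,u)=t^2\cE_g(u)+t(1-t)\cK(u)$ where the paper instead invokes \cref{sad2} and the corresponding step in the proof of \cref{thm:gstar}.
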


\begin{proof}
Let $g>g^*_{3,r}(m)$ and $\lbrace u_n \rbrace \subset S_{m,r}$ be any minimizing sequence for $e_r(g,m)$. 
As pointed in the above, $\lbrace u_n \rbrace$ is bounded in $H^1_r(\R^3)$ and 
after taking a subsequence if necessary, we may assume $u_n \rightharpoonup u$ weakly in $H^1_r(\R^3)$. 
Since $H^1_r(\R^3) \subset L^q(\R^3)$ is compact for every $q \in (2,6)$, 
Young's inequality with $V \in L^{3/2} (\R^3)$ gives $\cV(u_n) \to \cV(u)$, which yields 
\[
0> e_r(g,m) = \liminf_{n \to \infty} \cE_g(u_n) \geq \cE_g(u) \geq e_r \ab (g , \Vab{u}_2^2). 
\]
\cref{sad2} with $e_r(g,m)$ leads to $\Vab{u}_2^2 = m$ and $u$ is a minimizer.

For the case $g=g^*_{3,r}(m)$, we argue as in the proof of \cref{thm:gstar}. 
Let $\lbrace g_j \rbrace_j$ satisfy $g_j>g^*_{3,r}(m)$ and $g_j \to g^*_{3,r}(m)$. Choose $u_j \in S_{m,r}$ so that $\cE_{g_j} (u_j) = e_r(g_j,m) < 0$. 
Then $\lbrace u_j \rbrace$ is bounded in $H^1_r(\R^3)$ and the Lieb-Thirring (the Cwikel–Lieb–Rozenblum) inequality implies that 
there exists $C>0$ such that for all $j \geq 1$, 
\[
1 \leq C \int_{\R^3} (V_+ * u_j^2)^{3/2} dx. 
\]
Let $u_j \rightharpoonup \omega_c$ weakly in $H^1_r(\R^3)$. 
If $\omega_c \equiv 0$, then by $\Vab{u_j}_q \to 0$ for $q \in (2,6)$, 
the argument in \cref{lions_crit} is valid and a contradiction occurs. 
Thus, $\omega_c \not \equiv 0$ and $g_j \cV (u_j) \to g^*_{3,r} (m) \cV(\omega_c)$. 
Hence, 
\[
0 = e_r(g^*_{3,r} (m) , m) = \lim_{j \to \infty} \cE_{g_j} (u_j) \geq \cE_{g^*_{3,r} (m)} (\omega_c) \geq e( g^*_{3,r} (m) , \Vab{\omega_c}_2^2 ) = 0.
\]
Then the argument same to the proof of \cref{thm:gstar} works in this case and we conclude $\Vab{\omega_c}_2^2 = m$. 
Therefore, $\omega_c$ is the desired minimizer. 
\end{proof}

\begin{remark}\label{r:g3*=g3*r}
	When $V$ is non-increasing, 
	since $e(g,m) = e_r(g,m)$ holds by \cref{r:e=e_r}, 
	we have $g^*_3(m) = g^*_{3,r}(m)$.
\end{remark}

To prove that $\cE_g|_{S_m}$ admits the mountain pass geometry, 
we need the following result: 
\begin{lemma}\label{l:posbdry}
	Let $\wt{g} > 0$ be given and \eqref{assump} hold. 
	Then there exists $\rho_0=\rho_0(\wt{g},m)>0$ such that
	\[
	\inf \Set{ \cE_g(u) | u \in S_{m,r}, \ \Vab{\nabla u}_2^2 = \rho^2 } \geq \frac{\rho^2}{8} 
	\quad \text{for each $g \in (0,\wt{g}]$ and $\rho \in (0,\rho_0]$}. 
	\]
	As a consequence, if $g \in (0,\wt{g}]$ and $u \in S_{m,r}$ satisfies $\cE_g(u) \leq 0$, then $\Vab{\nabla u}_2 > \rho_0$ holds. 
\end{lemma}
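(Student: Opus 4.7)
The plan is to decompose $V$ into a piece with small $L^{3/2}$-norm (which can be absorbed into the kinetic energy) and a piece that is bounded and compactly supported (which will yield a contribution of order $\rho^3$ via the supercritical $L^4$-norm). For small $\rho$ the $\rho^3$ term is lower order than $\rho^2$, giving the claimed bound.

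First I would record the two standard Gagliardo--Nirenberg inequalities in dimension three: $\|u\|_3^4 \leq C_1\,m\,\rho^2$ and $\|u\|_4^4 \leq C_2\,m^{1/2}\rho^3$ for every $u \in S_m$ with $\|\nabla u\|_2 = \rho$. Then, given $\varepsilon > 0$, I would sharpen the decomposition from \ref{A1} using $V \in L^{3/2}(\R^3)$: take $V_{\varepsilon,1} \coloneq V\,\mathbf 1_{\{|V|\leq R,\,|x|\leq R\}}$ for $R$ sufficiently large, and $V_{\varepsilon,2} \coloneq V - V_{\varepsilon,1}$. Then $\|V_{\varepsilon,2}\|_{3/2} < \varepsilon$ by dominated convergence, while $V_{\varepsilon,1}$ is bounded and compactly supported, in particular $V_{\varepsilon,1} \in L^1(\R^3)$.

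Young's inequality applied to the $V_{\varepsilon,2}$-part gives $|\cV_{\varepsilon,2}(u)| \leq \tfrac{C_1}{4}\varepsilon\,m\,\rho^2$. For the $V_{\varepsilon,1}$-part, the AM--GM inequality $u^2(x)u^2(y) \leq \tfrac12\bigl(u^4(x)+u^4(y)\bigr)$ combined with symmetry in $x \leftrightarrow y$ produces $|\cV_{\varepsilon,1}(u)| \leq \tfrac{1}{4}\|V_{\varepsilon,1}\|_1\,\|u\|_4^4 \leq \tfrac{C_2}{4}\|V_{\varepsilon,1}\|_1\,m^{1/2}\rho^3$. Now I would fix $\varepsilon = \varepsilon(\wt g,m)$ so that $\wt g\,C_1\,\varepsilon\,m/4 \leq 1/8$; this determines $\|V_{\varepsilon,1}\|_1$ as a constant $K$ depending only on $\wt g$, $m$, $V$. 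For every $g \in (0,\wt g]$ and $u \in S_m$ with $\|\nabla u\|_2 = \rho$,
\[
\cE_g(u) \geq \tfrac12\rho^2 - \tfrac18\rho^2 - C''\rho^3 = \tfrac38\rho^2 - C''\rho^3, \qquad C'' \coloneq \wt g\,C_2\,K\,m^{1/2}/4.
\]
Choosing $\rho_0 \coloneq 1/(4C'')$ ensures $C''\rho \leq 1/4$ for $\rho \in (0,\rho_0]$, hence $\cE_g(u) \geq \tfrac18\rho^2$. The stated consequence follows by contradiction: if $\cE_g(u) \leq 0$ and $\|\nabla u\|_2 \leq \rho_0$ simultaneously held for some $u \in S_{m,r}$ and $g \in (0,\wt g]$, one would have $0 < \rho^2/8 \leq \cE_g(u) \leq 0$.

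The only non--routine step is the AM--GM bound on the $V_{\varepsilon,1}$-piece, which converts the convolution into the \emph{supercritical} $L^4$-norm and hence into the factor $\rho^3$ through Gagliardo--Nirenberg in $d=3$. Applying Young's inequality directly against $V_{\varepsilon,1} \in L^{q_\varepsilon}$ with $q_\varepsilon > 3/2$ would instead produce a factor $\rho^{3/q_\varepsilon}$, and because $3/q_\varepsilon < 2$ this would \emph{dominate} $\rho^2$ for small $\rho$ and be useless; the AM--GM route is precisely what renders the $V_{\varepsilon,1}$-contribution lower order than the kinetic energy on a small $\|\nabla u\|_2$-sphere.
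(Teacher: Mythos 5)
Your proof is correct and takes essentially the same route as the paper's: split $V$ into a small-$L^{3/2}$ tail (absorbed into the kinetic term via Young's and Sobolev's inequalities) plus a compactly supported $L^1$ piece controlled by $\|u\|_4^4 \lesssim m^{1/2}\rho^3$ through Gagliardo--Nirenberg, then choose $\rho_0$ small enough that the cubic term is dominated. The only cosmetic differences are that you additionally truncate $V$ in its values whereas the paper simply uses $\chi_{B_R(0)}V$ (already in $L^1$ by H\"older on a bounded set), and your ``AM--GM'' step is just the $L^1$--$L^2$--$L^2$ case of Young's inequality that the paper invokes.
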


\begin{proof}
For $R > 1$, write $V_{R,1} \coloneq \chi_{B_R(0)} V$ and $V_{R,2} \coloneq V - V_{R,1}$. 
Thanks to $V \in L^{3/2} (\R^3)$, $\Vab{V_{R,2}}_{3/2} \to 0$ as $R \to \infty$. 
Young's and Sobolev's inequality imply that for all $u \in S_{m,r}$ 
\[
\iint_{\R^3\times \R^3} \vab{V_{R,2}(x-y)} u^2(x) u^2(y) dx dy 
\leq \Vab{u}_2^2 \Vab{V_{R,2}}_{3/2} \Vab{u}_6^2 \leq C_S m \Vab{V_{R,2}}_{3/2} \Vab{\nabla u}_2^2. 
\]
On the other hand, by $V_{R,1} \in L^1(\R^3)$ and the Gagliardo--Nirenberg inequality, 
for each $u \in S_{m,r}$, 
\[
\int_{\R^3 \times \R^3} \vab{V_{R,1}(x-y)} u^2(x) u^2(y) dx dy 
\leq \Vab{V_{R,1}}_1 \Vab{u}_4^4 \leq C \Vab{V_{R,1}}_1 \Vab{\nabla u}_2^3 \sqrt{m}. 
\]
Therefore, for any $u \in S_{m,r}$ and $g \in (0,\wt{g}]$, 
\[
\begin{aligned}
	\cE_g(u) &= 
	\frac{1}{2} \Vab{\nabla u}_2^2 - \frac{g}{4} \int_{\R^3\times \R^3} \ab( V_{R,1} (x-y) + V_{R,2} (x-y) ) u^2(x) u^2(y) dx dy
	\\
	&\geq \frac{1}{2} \Vab{\nabla u}_2^2 - g C m \Vab{V_{R,2}}_{3/2} \Vab{\nabla u}_2^2 - Cg \Vab{V_{R,1}}_1 \sqrt{m} \Vab{\nabla u}_2^3
	\\
	&= 
	\Vab{\nabla u}_2^2 \ab[ \frac{1}{2} - g Cm \Vab{V_{R,2}}_{3/2} - Cg \Vab{V_{R,1}}_1 \sqrt{m} \Vab{\nabla u}_2 ]. 
\end{aligned}
\]
Fix $R(g,m) = R >1$ so large that $\wt{g}Cm \Vab{V_{R,2}}_{3/2} \leq 1/4$ and choose $\rho_0 = \rho_0 (\wt{g},m) > 0$ such that 
$C \wt{g} \Vab{V_{R,1}} \sqrt{m} \rho_0 \leq 1/8$. 
Then for every $\rho \in (0,\rho_0]$, $g \in (0,\wt{g}]$ and $u \in S_{m,r}$ with $\Vab{\nabla u}_2 = \rho$, 
\[
\cE_g(u) \geq \frac{\rho^2}{8},
\]
which completes the proof. 
\end{proof}

%Now \label{l:posbdry} helps in establishing that $\cE_g|_{S_m}$ admits the mountain pass geometry

In what follows, we assume the following condition:
\begin{equation}\label{ex:nonposel}
	\begin{dcases}
	&\text{there exist $\rho_1>0$ and $0 \leq u_1 \in S_{m,r}$ such that}\\
	& \Vab{\nabla u_1}_2 > \rho_1, \quad  \inf \Set{ \cE_g(u) | u \in S_{m,r}, \Vab{\nabla u}_2=\rho_1 } \geq \frac{\rho^2_1}{8} > \cE_g(u_1). 
	\end{dcases}
\end{equation}

\begin{remark}\label{range-g}
	When $g \geq g^*_{3,r}$, \eqref{ex:nonposel} holds due to \cref{l:posbdry} and the existence of minimizer in \cref{l:radmini}. 
	Furthermore, there exists $\delta_0>0$ such that \eqref{ex:nonposel} holds true for any $g \in [g^*_{3,r} -\delta_0 , g^*_{3,r}]$. 
	In fact, let $u_1$ be a minimizer for $e_r(g^*_{3,r}, m)$ and $\rho_0 > 0$ be the constant in \cref{l:posbdry} with $\wt{g} \coloneq g^*_{3,r}$. 
	By $\cE_{\wt{g}}(\vab{u_1}) = \cE_{\wt{g}}(u_1)=0$ and $\vab{u_1} \in S_{m,r}$, 
	without loss of generality, we may assume $u_1 \geq 0$. 
	Then \cref{l:posbdry} leads to $\Vab{\nabla u_1}_2 > \rho_0$ and 
	\[
	\inf \Set{ \cE_g(u) | u \in S_{m,r} , g \in (0, g^*_{3,r} ], \Vab{\nabla u}_2 = \rho_0 } \geq \frac{\rho_0^2}{8}. 
	\]
	Therefore, for a small $\delta_0>0$ and any $g \in [g_{3,r}^*-\delta_0, g_{3,r}^*]$, we obtain 
	\[
	\inf \Set{ \cE_g(u) | u \in S_{m,r} , \Vab{\nabla u}_2 = \rho_0 } \geq \frac{\rho_0^2}{8} > \cE_g(u_1),
	\]
	which implies that \eqref{ex:nonposel} holds for all $g \in [g^*_{3,r} - \delta_0 , g^*_{3,r} ]$.  
\end{remark}

To proceed, we define an auxiliary functional $\wt{\cE_g} \colon \R \times H^1_r(\R^3) \to \R$ based on the scaling 
(see \cite{Jeanjean,BMRV21,ikomatanaka,MoRiVe22}):
\[
\wt{\cE}_g (\theta, u) \coloneq \cE_g \ab( e^{3\theta/2} u(e^\theta \cdot) ) 
= \frac{e^{2\theta}}{2} \Vab{\nabla u}_2^2 - \frac{g}{4} \iint_{\R^3 \times \R^3} V \ab( \frac{x-y}{e^\theta} ) u^2(x) u^2(y) dx dy. 
\]
Under $W(x) = x \cdot \nabla V(x) \in L^1(\R^3) + L^\infty(\R^3)$, we can prove $\wt{\cE}_g \in C^1(\R \times H^1_r(\R^3), \R)$ and 
\[
\partial_\theta \wt{\cE}_g (\theta, u) = e^{2\theta} \Vab{\nabla u}_2^2 + \frac{g}{4} \iint_{\R^3 \times \R^3} W \ab( \frac{x-y}{e^\theta} ) u^2 (x) u^2(y) dx dy.
\]
Indeed, the formula can be checked for radial functions in $C^\infty_c(\R^3)$. 
For general $u \in H^1_r(\R^3)$, choose $\lbrace u_n \rbrace \subset C^\infty_c(\R^d) $ such that $u_n$ is radial and $\Vab{u_n - u}_{H^1} \to 0$. 
Since $\lbrace \cE_g (\cdot, u_n) \rbrace_n$ and $\lbrace \partial_\theta \cE_g (\cdot, u_n) \rbrace_n$ 
converge locally uniformly with respect to $\theta\in \R$, 
we see that $\wt{\cE}_g (\cdot, u)$ is of class $C^1$ for general $u \in H^1_r(\R^3)$. 
It is now not difficult to show $\wt{\cE}_g \in C^1(\R \times H^1_r(\R^3) , \R)$.

Under \eqref{ex:nonposel}, we define the mountain pass values of $\cE_g$ and $\wt{\cE}_g$: 
\begin{equation*}
	c_{mp} \coloneq  \inf_{\gamma \in \Gamma} \max_{0 \leq t \leq 1} \cE_g( \gamma(t) ),
	\quad 
	\wt{c}_{mp} \coloneq \inf_{\wt{\gamma} \in \wt{\Gamma}} \max_{0 \leq t \leq 1} \wt{\cE}_g \ab( \wt{\gamma}(t) ),
\end{equation*}
where 
\[
\begin{aligned}
	\Gamma &\coloneq \Set{ \gamma \in C([0,1] , S_{m,r} ) | \Vab{\nabla \gamma(0)}_2 < \rho_1, \gamma(1) = u_1 }, 
	\\
	\wt{\Gamma} &\coloneq \Set{ \wt{\gamma} = (\gamma_\theta , \gamma_u ) \in C( [0,1] , \R \times S_{m,r} ) | 
		\gamma_\theta(0) = 0 = \gamma_\theta (1), \ \gamma_u \in \Gamma }.
\end{aligned}
\]

\begin{lemma}\label{l:mp}
	Under \eqref{ex:nonposel}, $\Gamma, \wt{\Gamma} \neq \emptyset$ and $ \rho_1^2/8 \leq c_{mp} = \wt{c}_{mp}$. 
\end{lemma}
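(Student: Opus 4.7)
The plan is to prove non-emptiness by exhibiting explicit paths, then deduce the lower bound $c_{mp}\ge\rho_1^2/8$ from the intermediate value theorem together with \eqref{ex:nonposel}, and finally establish the equality $c_{mp}=\wt c_{mp}$ by lifting paths up and projecting them down through the $L^2$-preserving scaling used to define $\wt\cE_g$.

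For non-emptiness, I would use the standard dilation $u_t(x)\coloneq t^{3/2}u_1(tx)$ for $t\in(0,1]$, which satisfies $u_t\in S_{m,r}$ and $\Vab{\nabla u_t}_2^2=t^2\Vab{\nabla u_1}_2^2$. Since $\Vab{\nabla u_t}_2\to 0$ as $t\to 0^+$, reparametrizing to $[0,1]$ produces a continuous path $\gamma\in C([0,1],S_{m,r})$ with $\Vab{\nabla\gamma(0)}_2<\rho_1$ and $\gamma(1)=u_1$, so $\Gamma\neq\emptyset$. Then $\wt\gamma(t)\coloneq(0,\gamma(t))$ lies in $\wt\Gamma$, so $\wt\Gamma\neq\emptyset$ as well.

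To prove $c_{mp}\ge\rho_1^2/8$, pick any $\gamma\in\Gamma$. Since $t\mapsto\Vab{\nabla\gamma(t)}_2$ is continuous and satisfies $\Vab{\nabla\gamma(0)}_2<\rho_1<\Vab{\nabla u_1}_2=\Vab{\nabla\gamma(1)}_2$ by \eqref{ex:nonposel}, there exists $t^*\in(0,1)$ with $\Vab{\nabla\gamma(t^*)}_2=\rho_1$. The hypothesis in \eqref{ex:nonposel} then gives $\cE_g(\gamma(t^*))\ge\rho_1^2/8$, whence $\max_{t\in[0,1]}\cE_g(\gamma(t))\ge\rho_1^2/8$, and infimizing over $\gamma$ yields the bound.

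For the equality of mountain-pass levels, the inequality $\wt c_{mp}\le c_{mp}$ is immediate: for $\gamma\in\Gamma$ the lift $\wt\gamma\coloneq(0,\gamma)\in\wt\Gamma$ satisfies $\wt\cE_g(\wt\gamma(t))=\cE_g(\gamma(t))$ because $\wt\cE_g(0,u)=\cE_g(u)$. For the reverse inequality, given $\wt\gamma=(\gamma_\theta,\gamma_u)\in\wt\Gamma$, define
\[
\gamma(t)(x)\coloneq e^{3\gamma_\theta(t)/2}\gamma_u(t)(e^{\gamma_\theta(t)}x).
\]
The map $T\colon\R\times H^1_r(\R^3)\to H^1_r(\R^3)$, $T(\theta,u)(x)\coloneq e^{3\theta/2}u(e^\theta x)$ preserves the $L^2$-norm and is strongly continuous (using $\Vab{\nabla T(\theta,u)}_2=e^\theta\Vab{\nabla u}_2$ together with the strong continuity of dilations on $L^2$), so $\gamma\in C([0,1],S_{m,r})$. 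Moreover $\gamma_\theta(0)=\gamma_\theta(1)=0$ gives $\gamma(0)=\gamma_u(0)$ with $\Vab{\nabla\gamma(0)}_2<\rho_1$, and $\gamma(1)=\gamma_u(1)=u_1$, so $\gamma\in\Gamma$. By construction $\cE_g(\gamma(t))=\wt\cE_g(\wt\gamma(t))$, hence $c_{mp}\le\wt c_{mp}$.

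The only delicate point is the continuity of the dilation map $T$ as a map into $H^1_r(\R^3)$; this is a routine density argument (approximate $u$ by radial $C^\infty_c$ functions, for which continuity is obvious, then transfer via the uniform scaling bound), so I expect no serious obstacle in any step of the proof.
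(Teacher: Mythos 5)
Your proposal is correct and follows essentially the same route as the paper: the same dilation path $t\mapsto t^{3/2}u_1(t\cdot)$ for non-emptiness, the same intermediate-value argument combined with \eqref{ex:nonposel} for the lower bound $\rho_1^2/8\leq c_{mp}$, and the same lift/projection via $(\theta,u)\mapsto e^{3\theta/2}u(e^\theta\cdot)$ for the equality $c_{mp}=\wt{c}_{mp}$. The continuity of the scaling map that you flag as the only delicate point is indeed handled by the density argument you sketch (and is implicitly used, without comment, in the paper's proof as well).
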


\begin{proof}
For $u_1$ in \eqref{ex:nonposel}, consider 
\[
\gamma_0(t) \coloneq \ab( \e(1-t) + t )^{3/2} u_1 \ab( \ab( \e(1-t) + t ) x )  \in C([0,1] , S_{m,r} ). 
\]
For a sufficiently small $\e>0$, observe that $\Vab{\nabla \gamma_0(0) }_2^2 = \e^2 \Vab{\nabla u_1}_2^2 < \rho_1^2$. 
Hence, $\gamma \in \Gamma$. Since $(0,\gamma) \in \wt{\Gamma}$ for any $\gamma \in \Gamma$,  
$\wt{\Gamma} \neq \emptyset$ holds, and $c_{mp}, \wt{c}_{mp}$ are well-defined.

For each $\gamma \in \Gamma$ there exists $s \in (0,1)$ such that $\Vab{\nabla \gamma (s)}_2 = \rho_1$. Thus, \eqref{ex:nonposel} leads to 
\[
\max_{0 \leq t \leq 1} \cE_g(\gamma(t)) \geq \cE_g(\gamma(s)) \geq \frac{\rho_1^2}{8}. 
\]
Hence we obtain  $\rho_1^2/8 \leq c_{mp}$.

To show $c_{mp} = \wt{c}_{mp}$, note first that clearly $\wt{c}_{mp} \leq c_{mp}$. 
On the other hand, for any $\wt{\gamma} = (\gamma_\theta, \gamma_u) \in \wt{\Gamma}$, 
a path defined by 
\[
\gamma_0 (t) (x) \coloneq e^{ 3 \gamma_\theta(t) /2 } \gamma_u(t) \ab( e^{ \gamma_\theta (t)} x )
\]
satisfies $\gamma_0 \in C([0,1] , S_{m,r})$, $\Vab{\nabla \gamma_0(0)}_2 = \Vab{\nabla \gamma_u(0)}_2 < \rho_1$ and 
$\gamma_0(1) = \gamma_u(1) = u_1$. Thus $\gamma_0 \in \Gamma$ and 
\[
\max_{0 \leq t \leq 1} \wt{\cE}_g \ab(  \wt{\gamma} (t) ) = \max_{0 \leq t \leq 1} \cE_g(\gamma_0(t)) \geq c_{mp}.
\]
Since $\wt{\gamma} \in \wt{\Gamma}$ is arbitrary, $c_{mp} \leq \wt{c}_{mp}$ holds and this completes the proof.
\end{proof}

Select $\lbrace \gamma_n \rbrace \subset \Gamma$ so that 
\[
\max_{0 \leq t \leq 1} \cE_g(\gamma_n(t)) \to c_{mp}. 
\]
Since $ \cE_g( \vab{\gamma_n(t)} ) \leq  \cE_g(\gamma_n(t))$ and the map $[0,1] \ni t \mapsto \vab{\gamma_n(t)} \in S_{m,r}$ 
belongs to $\Gamma$ due to $\Vab{ \nabla \vab{\gamma_n(0)} }_2 \leq  \Vab{\nabla \gamma_n(0)}_2$ and 
$\vab{\gamma_n(1)} = \vab{u_1} = u_1$, 
without loss of generality, we may suppose $\gamma_n(t) \geq 0$ for all $n \in \N$ and $t \in [0,1]$. 
Note that \cref{l:mp} implies that 
\[
\max_{0 \leq t \leq 1} \wt{\cE}_g (\wt{\gamma}_n(t)) \to \wt{c}_{mp} = c_{mp}, \quad \wt{\gamma}_n \coloneq (0, \gamma_n) \in \wt{\Gamma}. 
\]
By applying Ekeland's variational principle or the deformation lemma (for instance, see the arguments in \cite[Lemma 5.15 and Theorem 5.16]{Willem}), 
there exists $\lbrace (\theta_n,u_n) \rbrace$ such that 
\begin{equation}\label{propPS}
	\begin{dcases}
		&\wt{\cE}_g(\theta_n,u_n) \to c_{mp}, \quad 
		\Vab{d_{S_{m,r}} \wt{\cE}_{g} (\theta_n, u_n)}_{ T_{u_n}^* S_{m,r}  }  \to 0, 
		\\
		&\partial_\theta \wt{\cE}_g (\theta_n,u_n) \to 0, 
		\quad  \dist_{\R \times S_{m,r}} \ab( (\theta_n,u_n) , \wt{\gamma}_n([0,1]) ) \to 0,
	\end{dcases}
\end{equation}
where $d_{S_{m,r}} \wt{\cE}_g$ is the partial derivative of $\wt{\cE}_g |_{\R \times S_{m,r}}$ on $\R \times S_{m,r}$, 
$T_u S_{m,r}$ is the tangent space of $S_{m,r}$ at $u \in S_{m,r}$, that is, 
\[
T_u S_{m,r} = \Set{ w \in H^1_r(\R^d) | \int_{\R^d} u w dx = 0 }
\]
and 
\[
\Vab{ d_{S_{m,r}} \wt{\cE}_g (\theta,u)  }_{ T^*_u S_{m,r} } \coloneq \sup \Set{ \partial_u\wt{\cE}_g(\theta,u) w | w \in T_u S_{m,r}, \ \Vab{w}_{H^1} \leq 1 }.
\]

\begin{proposition}\label{ConvPS}
	Suppose \eqref{assump}, \eqref{ex:nonposel} and $V \geq 0$. 
	Then any sequence $\lbrace (\theta_n,u_n) \rbrace \subset \R \times S_{m,r}$ satisfying \eqref{propPS} contains 
	a strongly convergent subsequence in $\R \times H^1_r(\R^3)$. 
\end{proposition}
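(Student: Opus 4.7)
The plan is to combine the standard concentration--compactness strategy with the compact embedding $H^1_r(\R^3) \hookrightarrow L^q(\R^3)$ for $q \in (2,6)$ and to exploit the two extra identities encoded in \eqref{propPS}: the Euler--Lagrange equation read off from the tangent-space component and the Pohozaev-type identity read off from $\partial_\theta \wt{\cE}_g(\theta_n,u_n) \to 0$. Since $\wt{\gamma}_n = (0,\gamma_n)$, the distance condition forces $\theta_n \to 0$; consequently $\wt{u}_n(x) \coloneq e^{3\theta_n/2} u_n(e^{\theta_n} x)$ lies in $S_m$ with $\cE_g(\wt{u}_n) = \wt{\cE}_g(\theta_n,u_n) \to c_{mp}$, so \cref{prop1} yields boundedness of $\{\wt{u}_n\}$, and hence of $\{u_n\}$, in $H^1_r(\R^3)$.

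The first key step is to rule out vanishing. Suppose $u_n \to 0$ in $L^q(\R^3)$ for some (hence every) $q \in (2,6)$. Then \eqref{vanicV} gives $\cV(u_n) \to 0$, and by treating $W = W_1 + W_\infty$ just as $V$ is treated in \cref{prop1} --- Young's inequality against the $L^1$ piece and, for $W_\infty$, a splitting into a compactly supported part (controlled in $L^1$) plus an arbitrarily small $L^\infty$ tail --- one also gets $\iint W((x-y)/e^{\theta_n}) u_n^2(x) u_n^2(y)\,dx\,dy \to 0$. The relation $\partial_\theta \wt{\cE}_g(\theta_n,u_n) \to 0$ then forces $\Vab{\nabla u_n}_2 \to 0$, whence $\wt{\cE}_g(\theta_n,u_n) \to 0$, contradicting $c_{mp} \geq \rho_1^2/8 > 0$ from \cref{l:mp}.

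Non-vanishing together with the radial compact embedding supplies, after passing to a subsequence, $u_n \rightharpoonup u \not\equiv 0$ weakly in $H^1_r(\R^3)$ and strongly in every $L^q$ with $q \in (2,6)$. Choosing $t_n \in [0,1]$ with $\Vab{u_n - \gamma_n(t_n)}_{H^1} \to 0$ and using $\gamma_n(t) \geq 0$, we obtain $\Vab{u_n^-}_2 \to 0$, hence $u \geq 0$. Extracting Lagrange multipliers $\lambda_n$ in the standard way from the tangent-space condition (so that $\lambda_n m = \partial_u \wt{\cE}_g(\theta_n,u_n)[u_n] + o(1)$) gives $\lambda_n \to \lambda$ and, in the limit, $-\Delta u + \lambda u = g(V*u^2)u$ in $\R^3$. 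Because $V \geq 0$, $u \geq 0$ and $u \in L^2(\R^3) \setminus \{0\}$, the Liouville-type argument in the proof of \cref{l:decay} rules out $\lambda \leq 0$: otherwise $-\Delta u \geq 0$ for a nontrivial $L^2$ function on $\R^3$, which is impossible. Hence $\lambda > 0$.

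It remains to upgrade weak to strong convergence. By \cref{p:Poho} applied to the radial solution $u$, we have $\Vab{\nabla u}_2^2 = -\frac{g}{4}\int_{\R^3}(W*u^2)u^2\,dx$. Setting $v_n \coloneq u_n - u$, a Brezis--Lieb decomposition of the $W$-convolution (the remainder $\iint W(x-y) v_n^2(x) v_n^2(y)\,dx\,dy$ vanishing by the same argument used in the vanishing step) together with $\theta_n \to 0$ allows us to pass to the limit in $\partial_\theta \wt{\cE}_g(\theta_n,u_n) \to 0$ and obtain $\Vab{\nabla u}_2^2 + \lim \Vab{\nabla v_n}_2^2 + \frac{g}{4}\int_{\R^3}(W*u^2)u^2\,dx = 0$; combined with the Pohozaev identity this forces $\Vab{\nabla v_n}_2 \to 0$. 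On the other hand, testing the approximate Euler--Lagrange equation against $u_n$ and against $u$ and subtracting yields, in the limit, $\lim \Vab{\nabla v_n}_2^2 = \lambda(m - \Vab{u}_2^2)$, so $\lambda > 0$ forces $\Vab{u}_2^2 = m$, hence $v_n \to 0$ in $L^2$. Together with $\Vab{\nabla u_n}_2 \to \Vab{\nabla u}_2$ this gives $u_n \to u$ strongly in $H^1_r(\R^3)$. The main obstacle here is the very last point: the radial embedding $H^1_r \hookrightarrow L^2$ is \emph{not} compact, so strong $L^q$ convergence for $q \in (2,6)$ alone cannot prevent $L^2$ mass from escaping to infinity, and it is precisely the interplay between the Pohozaev-type identity from $\partial_\theta \wt{\cE}_g$ and the limiting Lagrange-multiplier equation that forces $\Vab{u}_2^2 = m$.
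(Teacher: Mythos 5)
Your proof is correct and follows essentially the same route as the paper's: $\theta_n\to 0$ plus boundedness, exclusion of vanishing via $\partial_\theta\wt{\cE}_g\to 0$ and the $L^1+L^\infty$ splitting of $W$, the Liouville-type argument for $\lambda>0$, and strong convergence from the convergence of $\Vab{\nabla u_n}_2^2+\lambda\Vab{u_n}_2^2$. The only slip is the sign in your last identity, which should read $\lim_n\Vab{\nabla v_n}_2^2=-\lambda\ab(m-\Vab{u}_2^2)$; since the left side is nonnegative and $\lambda>0$, this alone forces both $\Vab{u}_2^2=m$ and $\Vab{\nabla v_n}_2\to 0$, so your Brezis--Lieb/Pohozaev detour is actually redundant (the paper dispenses with it).
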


\begin{proof}
By \eqref{propPS}, $\theta_n \to 0$. This fact with $\wt{\cE}_g(\theta_n,u_n) = c_{mp} + o(1)$ and \cref{prop1} imply that 
$\lbrace u_n \rbrace$ is bounded in $H^1_r(\R^3)$. 
By taking a subsequence if necessary, 
we may assume $u_n \rightharpoonup u_\infty$ weakly in $H^1_r(\R^3)$ 
and $u_n \to u_\infty$ strongly in $L^q(\R^3)$ for any $q \in (2,6)$. 
Remark that $u_\infty \geq 0$ in $\R^3$ holds due to \eqref{propPS} and $\gamma_n(t) \geq 0$, and 
that $\cV(u_n) \to \cV(u_\infty)$ holds. 
Since $\| d_{S_{m,r}} \wt{\cE}_g(u_n) \|_{T_{u_n}^* S_{m,r} } \to 0$, there exists $\lbrace \lambda_n \rbrace \subset \R$ such that 
\begin{equation}\label{fullPS}
\Vab{\partial_u \wt{\cE}_g(\theta_n, u_n) + \lambda_n Q'(u_n) }_{(H^1_r(\R^3))^*} \to 0, \quad 
\text{where} \ Q(u) \coloneq \frac{1}{2} \int_{\R^3} \vab{u}^2 dx. 
\end{equation}
Since $\theta_n \to 0$ and $\{u_n\}$ is bounded in $H^1(\R^3)$, so is $\{\lambda_n\}$. Let $\lambda_n \to \lambda_\infty$. 
From 
\[
\Vab{ V - V \ab( \frac{\cdot}{e^{\theta_n}} ) }_{3/2} \to 0
\]
and Young's inequality, we infer that 
\begin{equation}\label{conv-inter}
V \ab( \frac{\cdot}{e^{\theta_n}} )\ast u_n^2 \to  V * u_\infty^2 \quad 
\text{strongly in $L^q(\R^3)$ for $q \in (3/2,\infty)$}. 
\end{equation}
Hence, $u_\infty$ satisfies 
\begin{equation*}
-\Delta u_\infty + \lambda_\infty u_\infty = g( V * u_\infty^2 ) u_\infty  \quad \text{in} \ \R^3. 
\end{equation*}

Our next aim is to prove $u_\infty \not \equiv 0$. 
To this end, assume on the contrary that $u_\infty \equiv 0$. 
Then $u_n \to 0$ strongly in $L^q(\R^3)$ for each $q \in (2,6)$ and 
$\cV(u_n) \to 0$, which leads to 
\[
\frac{e^{2\theta_n}}{2} \Vab{\nabla u_n}_2^2 = \wt{\cE}_g(\theta_n,u_n) + o(1) \to c_{mp} > 0.
\]
On the other hand, from 
\[
o(1) = \partial_\theta \wt{\cE}_g(\theta_n,u_n) 
= e^{2\theta_n} \Vab{\nabla u_n}_2^2 + \frac{g}{4} \iint_{\R^3 \times \R^3} W \ab( \frac{x-y}{e^{\theta_n}} ) u^2_n(x) u^2_n(y) dx dy,
\]
it follows that 
\begin{equation}\label{Wpos}
\frac{g}{4} \iint_{\R^3 \times \R^3} W \ab( \frac{x-y}{e^{\theta_n}} ) u^2_n(x) u^2_n(y) dx dy  \to -2 c_{mp} < 0. 
\end{equation}
By assumption, $W$ can be decomposed as $W=W_1+W_\infty$ 
where $W_1 \in L^1(\R^3)$ and $W_\infty \in L^\infty(\R^3)$ with $W_\infty(x) \to 0$ as $\vab{x} \to \infty$. 
Young's inequality and the fact $\theta_n \to 0$ imply  
\begin{equation}\label{W1}
\vab{\iint_{\R^3 \times \R^3} W_1 \ab( \frac{x-y}{e^{\theta_n}} ) u^2_n(x) u^2_n(y)  } 
\leq e^{3\theta_n} \Vab{W_1}_1 \Vab{u_n}_{4}^4 \to 0.
\end{equation}
For $W_\infty$, write $W_{\infty, R,1} \coloneq \chi_{B_R(0)} W_\infty$ and $W_{\infty,R,2} \coloneq W_\infty - W_{\infty,R,1}$. 
Then $W_{\infty,R,1} \in L^1(\R^3)$ and $\Vab{W_{\infty,R,2}}_\infty \to 0$ as $R \to \infty$. Thus, 
\[
\begin{aligned}
	&\limsup_{n \to \infty} \vab{\iint_{\R^3 \times \R^3} W_\infty \ab( \frac{x-y}{e^{\theta_n}} ) u^2_n(x) u^2_n(y)  }  
	\\
	\leq \ & 
	\limsup_{n \to \infty} e^{3\theta_n}\Vab{W_{\infty,R,1}}_1 \Vab{u_n}_4^4 
	+ \limsup_{n \to \infty} \Vab{W_{\infty,R,2}}_\infty \Vab{u_n}_2^4 
	= \Vab{W_{\infty,R,2}}_\infty m^2
\end{aligned}
\]
and letting $R \to \infty$ yields 
\begin{equation}\label{Winfinity}
\lim_{n \to \infty}  \vab{\iint_{\R^3 \times \R^3} W_\infty \ab( \frac{x-y}{e^{\theta_n}} ) u^2_n(x) u^2_n(y)  }  = 0.
\end{equation}
However, \eqref{W1} and \eqref{Winfinity} contradict \eqref{Wpos}. Therefore, $u_\infty \not \equiv 0$.

We next claim $\lambda_\infty > 0$. 
On the contrary, suppose $\lambda_\infty \leq 0$. 
It follows from $u_\infty \geq 0$ and $V \geq 0$ that 
\begin{equation}\label{limiteq}
-\Delta u_\infty = - \lambda_\infty u_\infty + g(V* u_\infty^2) u_\infty \geq 0 \quad \text{in} \ \R^3. 
\end{equation}
Then \cite[Lemma A.2]{ikoma} and $u_\infty \in L^2(\R^3)$ give $u_\infty \equiv 0$, which is a contradiction. 
Hence $\lambda_\infty > 0$.

Finally, we show $\Vab{u_n - u_\infty}_{H^1} \to 0$. 
It follows from \eqref{fullPS}, \eqref{conv-inter}, \eqref{limiteq} and $\theta_n \to 0$ that 
\[
\begin{aligned}
	\Vab{\nabla u_\infty}_2^2 + \lambda_\infty \Vab{u_\infty}_2^2 
	&\leq \liminf_{n \to \infty} \ab[ e^{2\theta_n} \Vab{\nabla u_n}_2^2 + \lambda_n \Vab{u_n}_2^2 ] 
	\\
	& \leq \limsup_{n \to \infty} \ab[ e^{2\theta_n} \Vab{\nabla u_n}_2^2 + \lambda_n \Vab{u_n}_2^2 ] 
	\\
	&= \limsup_{n \to \infty} g \int_{\R^3 \times \R^3} V \ab( \frac{x-y}{e^{\theta_n}} ) u_n^2(x) u_n^2(y) dxdy
	\\
	&=g \int_{\R^3} (V* u_\infty^2) u_\infty^2 dx = \Vab{\nabla u_\infty}_2^2 + \lambda_\infty \Vab{u_\infty}_2^2.
\end{aligned}
\]
These inequalities yield 
\[
\Vab{\nabla u_n}_2^2 + \lambda_\infty \Vab{u_n}_2^2 \to \Vab{\nabla u_\infty}_2^2 + \lambda_\infty \Vab{u_\infty}_2^2.
\]
By $\lambda_\infty > 0$ and $u_n \rightharpoonup u_\infty$ weakly in $H^1(\R^3)$, 
we have $u_n \to u_\infty$ strongly in $H^1(\R^3)$ and the desired property holds. 
\end{proof}

Now we complete the proof of the first assertion in \cref{thm:2ndsol-nonex}. 

\begin{proof}[Proof of \cref{thm:2ndsol-nonex} assertion 1]
By \cref{range-g}, there exists $g_1 \in (0, g^*_{3,r} )$ such that for all $g \geq g_1$, \eqref{ex:nonposel} holds. 
Hence, for each $g \geq g_1$, \cref{ConvPS} implies the existence of $\omega_g \in S_{m,r}$ such that 
$\cE_g(\omega_g) = c_{mp} > 0$ and $d_{S_{m,r}} \cE_g(\omega_g) = 0$. 
Moreover, if $V$ is non-increasing, then \cref{r:g3*=g3*r} gives $g^*_3(m) = g^*_{3,r}$ 
and this completes the proof of \cref{thm:2ndsol-nonex}. 
\end{proof}

\subsection{Existence of local minimizers}
We now pass to the proof of the second assertion in \cref{thm:2ndsol-nonex}. 
Recall that we are interested in the following minimization problem 
	\begin{equation}\label{loc-minpro-sec6}
		\wt{e} (g,m) \coloneq \inf \Set{ \cE_g (u) | u \in S_m, \ \Vab{\nabla u}_2^2 > \frac{\rho_0^2}{4}  }.
	\end{equation}
Below, 
we prove that every minimizing sequence to \eqref{loc-minpro-sec6} has 
a strongly covergent subsequence up to translations.

For the existence of $\rho_0>0$ 
a result analogous to \cref{l:posbdry} is necessary. Its proof is almost identical to that result, and we omit it.

\begin{lemma}\label{l:wall}
	For any given $\wt{g} > 0$ and $\wt{m}>0$, there exists $\rho_0=\rho_0(\wt{g},\wt{m})>0$ such that
	\[
	\inf \Set{ \cE_g(u) | u \in S_{m'}, \ \Vab{\nabla u}_2^2 = \rho^2 } \geq \frac{\rho^2}{8} 
	\quad \text{for each $g \in (0,\wt{g}]$, $m' \in (0,\wt{m}]$ and $\rho \in (0,\rho_0]$}. 
	\]
\end{lemma}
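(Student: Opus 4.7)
The plan is to follow the proof of \cref{l:posbdry} essentially verbatim, the only difference being that I must now track uniformity in the mass parameter $m' \in (0, \wt{m}]$ rather than working with a single fixed mass, and I drop the radial restriction (which was not actually used in any inequality of that earlier proof). The radial assumption in \cref{l:posbdry} played no role in establishing the lower bound itself, so the same estimates apply word-for-word to a general $u \in S_{m'}$.

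First I would split, for $R>1$, $V = V_{R,1} + V_{R,2}$ with $V_{R,1} \coloneq \chi_{B_R(0)} V \in L^1(\R^3)$ and $\Vab{V_{R,2}}_{3/2} \to 0$ as $R\to\infty$ (which uses \ref{A1} with $V \in L^{3/2}(\R^3)$). For the small part I would combine H\"older, Young's convolution inequality and Sobolev's embedding $H^1(\R^3) \hookrightarrow L^6(\R^3)$ to get
\[
\iint_{\R^3\times\R^3} \vab{V_{R,2}(x-y)} u^2(x) u^2(y)\,dxdy \leq C_S\, m'\, \Vab{V_{R,2}}_{3/2}\, \Vab{\nabla u}_2^2,
\]
and for the integrable part I would combine Young and Gagliardo--Nirenberg ($\Vab{u}_4^4 \leq C \Vab{\nabla u}_2^3 \sqrt{m'}$) to obtain
\[
\iint_{\R^3\times\R^3} \vab{V_{R,1}(x-y)} u^2(x) u^2(y)\,dxdy \leq C\, \Vab{V_{R,1}}_1\, \sqrt{m'}\, \Vab{\nabla u}_2^3.
\]

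Combining these two bounds and using $m' \leq \wt{m}$ and $g \leq \wt{g}$, every $u \in S_{m'}$ with $g \leq \wt{g}$ satisfies
\[
\cE_g(u) \geq \Vab{\nabla u}_2^2 \left[\frac{1}{2} - \frac{\wt{g} C_S \wt{m}}{4} \Vab{V_{R,2}}_{3/2} - \frac{\wt{g} C}{4} \Vab{V_{R,1}}_1 \sqrt{\wt{m}}\, \Vab{\nabla u}_2\right].
\]
I would then fix $R = R(\wt{g},\wt{m}) > 1$ large enough so that $\wt{g} C_S \wt{m} \Vab{V_{R,2}}_{3/2}/4 \leq 1/8$, and then choose $\rho_0 = \rho_0(\wt{g},\wt{m}) > 0$ small enough so that $\wt{g} C \Vab{V_{R,1}}_1 \sqrt{\wt{m}}\, \rho_0/4 \leq 1/4$. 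With these choices, whenever $\Vab{\nabla u}_2 = \rho \leq \rho_0$ the bracket is bounded below by $1/8$, yielding $\cE_g(u) \geq \rho^2/8$ as required.

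There is no real obstacle here; the only subtlety worth mentioning is that the constants $R$ and $\rho_0$ must be chosen in terms of the ceilings $\wt{g},\wt{m}$ (not the individual $g, m'$) so that the same $\rho_0$ works uniformly for every admissible pair $(g,m')$. Since all the relevant bounds are monotone in $g$ and $m'$, this is automatic from the estimates above.
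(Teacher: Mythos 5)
Your proposal is correct and coincides with the paper's own (omitted, but indicated) argument: the same decomposition $V=V_{R,1}+V_{R,2}$, the same Young--Sobolev and Young--Gagliardo--Nirenberg estimates, and the same choice of $R$ and $\rho_0$ in terms of the ceilings $\wt{g},\wt{m}$, with only an immaterial difference in how the margins $1/8$ and $1/4$ are allotted to the two error terms. Your observation that the radial restriction played no role in the estimates of \cref{l:posbdry} is exactly why the paper states \cref{l:wall} on all of $S_{m'}$ and omits the proof as "almost identical."
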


In what follows, we fix $m>0$, select $\rho_0 > 0$ in \cref{l:wall} with $\wt{g} \coloneq g^*_d(m)$ and $\wt{m} \coloneq m$ 
and consider the following minimization problem 
%\eqref{solv-loc-min} 
for each $g \in (0,g^*_d(m)]$: 
\begin{equation}\label{def:local_problem}
\wt{e} (g,m) = \inf_{ u \in \mathcal{A}_m } \cE_g(u), \quad 
\mathcal{A}_m \coloneq \Set{ u \in S_m | \Vab{\nabla u}_2^2 > \frac{\rho_0^2}{4} }.
\end{equation}
We next show 

\begin{lemma}\label{Lem:mono}
	The function $ (0,m] \ni m' \mapsto \wt{e} (g,m') \in \R$ is nonincreasing. 
\end{lemma}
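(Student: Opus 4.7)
The plan is to fix $0 < m_1 < m_2 \leq m$ and deduce $\wt{e}(g,m_2) \leq \wt{e}(g,m_1)$ by the same ``mass-at-infinity'' construction used in the subadditivity proof of \cref{sad}. Given $\e > 0$, I would first pick $u \in \mathcal{A}_{m_1}$ with $\cE_g(u) < \wt{e}(g,m_1) + \e$. Next, I would produce an auxiliary bump $\phi \in C^\infty_c(\R^d)$ carrying the excess mass $m_2 - m_1$ and having negligible energy: starting from any $\phi_0 \in C^\infty_c(\R^d)$ with $\Vab{\phi_0}_2^2 = m_2 - m_1$, the scaling $\phi_s(x) \coloneq s^{d/2} \phi_0(sx)$ preserves the $L^2$-mass while $\cE_g(\phi_s) \to 0$ as $s \to 0^+$, exactly as in the proof that $e(g,m) \leq 0$ in \cref{cont_mon}; choosing $s$ small enough yields $\phi$ with $\vab{\cE_g(\phi)} < \e$.

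Fixing a unit vector $e \in \R^d$, I would then consider $\psi_t \coloneq u + \phi(\cdot + te)$ and its mass-normalization $v_t \coloneq \lambda_t \psi_t \in S_{m_2}$ with $\lambda_t \coloneq \sqrt{m_2}/\Vab{\psi_t}_2$. The interactions between $u$ and $\phi(\cdot + te)$ in the $L^2$-inner product, the Dirichlet energy, and the non-local term $\cV$ all vanish as $t \to \infty$ by the argument of \cref{sad}, giving $\lambda_t \to 1$ together with
\begin{equation*}
\Vab{\nabla v_t}_2^2 \to \Vab{\nabla u}_2^2 + \Vab{\nabla \phi}_2^2 > \frac{\rho_0^2}{4}, \qquad \cE_g(v_t) \to \cE_g(u) + \cE_g(\phi) < \wt{e}(g,m_1) + 2\e.
\end{equation*}
Hence $v_t \in \mathcal{A}_{m_2}$ for all $t$ large enough, so $\wt{e}(g,m_2) \leq \wt{e}(g,m_1) + 2\e$; sending $\e \to 0^+$ yields the desired monotonicity.

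The one point not quite covered by \cref{sad} is that here $u$ need not be compactly supported, so the cross term in $\cV(\psi_t)$, which has the form $\int_{\R^d} u^2(x) (V\ast \phi^2)(x + te) \, dx$, needs a separate justification. Splitting $V = V_1 + V_2$ with $V_1 \in C_c(\R^d)$ and $\Vab{V_2}_{d/2} < \delta$ (available by \ref{A1}), the convolution $V_1 \ast \phi^2$ has compact support, so for $t$ large it integrates against $u^2$ only on the tail $\{|x| \gtrsim t\}$, which has arbitrarily small $u^2$-mass since $u \in L^2$; meanwhile $V_2 \ast \phi^2$ is $O(\delta)$ in $L^\infty$ by Young's inequality, and its integral against $u^2$ is at most $C\delta m_1$. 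Since $\delta$ is arbitrary, the cross term tends to zero, and this is the only real technical step in the argument.
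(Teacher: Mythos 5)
Your proof is correct and follows essentially the same route as the paper's: attach a far-away bump of mass $m_2-m_1$ with negligible energy and observe that the constraint $\Vab{\nabla \cdot}_2^2 > \rho_0^2/4$ is inherited from $u$ alone. The only cosmetic difference is that the paper restricts to $u \in C^\infty_c \cap \mathcal{A}_{m_1}$ and uses a single scaling of a bump supported outside $B(0,1)$ to simultaneously kill its energy and push its support to infinity (so all cross terms vanish by disjointness of supports plus a Young-inequality tail estimate), whereas you translate the bump and work with a general near-minimizer $u$, which forces the extra -- but correct -- tail argument for the cross term in $\cV$.
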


\begin{proof}
Let $0<m_1 < m_2 \leq m$. For any $u \in C^\infty_c(\R^3) \cap \mathcal{A}_{m_1}$, 
pick any $\psi \in C^\infty_c(\R^3)$ with $\Vab{\psi}_2^2 = m_2 - m_1$ and $\supp \psi \subset \R^3 \setminus B(0,1)$, 
and consider 
$\psi_t (x) \coloneq t^{3/2} \psi( t x )$ for $t \in (0,1)$. Since 
\[
\cE_g(\psi_t) = \frac{t^2}{2} \Vab{\nabla \psi}_2^2 
- \frac{g}{4} \int_{\R^3} \ab( V_t * \psi^2 ) \psi^2 \odif{x},
\]
where $V_t(x) \coloneq V(x/t)$, 
by $\Vab{V_t}_{3/2} \to 0$ as $t \to 0^+$, 
we see that $\cE_g (\psi_t) \to 0$ as $t \to 0^+$. 
From $\supp u \cap \supp \psi_t = \emptyset$ for each $0< t \ll 1$, 
it follows that
\[
u + \psi_t \in S_{m_2}, \quad 
\frac{\rho_0^2}{4} < \Vab{\nabla u}_2^2 < \Vab{\nabla (u + \psi_t)}_2^2, \quad 
\wt{e} (g,m_2) \leq \lim_{t \to 0^+} \cE_g \ab( u + \psi_t ) = \cE_g(u).
\]
Since $u \in C^\infty_c(\R^d) \cap \mathcal{A}_{m_1}$ is arbitrary, 
$\wt{e} (g,m_2) \leq \wt{e} (g,m_1)$ holds. 
%
%By noting that $\cE_g$ is Lipschitz continuous on each ball $B(0,R) \subset H^1(\R^d)$, 
%the continuity in $m'$ can be checked simply.
%in a similar way as in the Preliminaries.
\end{proof}

\begin{proof}[Proof of \cref{thm:2ndsol-nonex} assertion 2]
Let $\omega_0 \in S_m$ be a minimizer of $\cE_{g^*_d(m)}$.
%Since $\cE_{g^*_d(m)} (\omega_0) = 0$, \cref{l:wall} yields $\Vab{\nabla \omega_0}_2^2 > \rho_0^2$.
%Indeed, 
Assume that $\Vab{\nabla \omega_0}_2^2 =r^2\leq \rho_0^2$ where $\rho_0>0$ is the number in \cref{l:wall}: 
then \cref{l:wall} yields %together with zero being the global minimum, yields
\begin{equation*}
0=\cE_{g^*_d(m)}= \inf \Set{ \cE_g(u) | u \in S_{m}, \ \Vab{\nabla u}_2^2 =  r^2} \geq \frac{r^2}{8} >0
\end{equation*}
a contradiction. Thus, $\Vab{\nabla \omega_0}_2^2 > \rho_0^2$.
Similarly, since $\cE_{g^*_d(m)} (\omega_0) = 0$, we have $\cE_{g} (\omega_0)=\frac{1}{2}\Vab{\nabla \omega_0}_2^2\left(1-\frac{g}{g^*_d(m)}\right)$  and 
there exists $g_3 \in (0, g^*_d(m) )$ such that $\cE_g(\omega_0) < \rho_0^2/33$ holds for all $g \in [g_3,g^*_d(m))$.  
Consequently, 
\[
\wt{e} (g,m) \leq \cE_g( \omega_0 ) < \frac{\rho^2_0}{33} \quad \text{for every $g \in [g_3,g^*_d(m))$}.
\]
%Note in passing to the limit that 
In particular, the first inequality implies
\begin{equation*}\label{eq:local_to_global}
\lim_{g\to g^*_d(m)} \wt{e} (g,m)=0
\end{equation*}
which again is reminiscent of the typical behavior of the Gibbs functional upon approaching the transition point, where the previously metastable states become global equilibrium states \cite{Callen}. 

Let $(u_n)_{n=1}^\infty \subset \mathcal{A}_m$ be any minimizing sequence for $\wt{e} (g,m) $.
By \cref{prop1}, $(u_n)_{n=1}^\infty$ is bounded in $H^1(\R^d)$. 
If $\sup_{z \in \mathbb{Z}^d} \Vab{u_n}_{L^2(z+Q)} \to 0$, 
then $\Vab{u_n}_q \to 0$ for all $2 < q < 2^*$ and $\cV(u_n) \to 0$. 
Therefore, 
\[
 \frac{\rho^2_0}{33} > \cE_g (\omega_0) \geq \wt{e} (g,m) = \lim_{n \to \infty} \cE_g(u_n) 
 = \lim_{n \to \infty} \frac{1}{2} \Vab{\nabla u_n}_2^2.
\]
On the other hand, by $u_n \in \mathcal{A}_m$, $\Vab{\nabla u_n}_2^2 > \rho_0^2 /4$, which is a contradiction. 
As a consequence, $\lim_{n \to \infty} \sup_{z \in \mathbb{Z}^d} \Vab{u_n}_{L^2(z+Q)} = c_0>0$.

Choose $(z_n)_{n=1}^\infty \subset \mathbb{Z}^d$ so that $\Vab{u_n}_{L^2(z_n+Q)} \to c_0 > 0$ and 
set $v_n \coloneq u_n(\cdot +x_n)$. Then $\cE_g(v_n) \to \wt{e} (g,m)$ by translation invariance, and, by the Rellich--Kondrashov theorem, 
$v_n \rightharpoonup v_0 \not \equiv 0 $ weakly in $H^1(\R^d)$. 
Our goal is to show that $v_0$ is a local minimizer, i.e. a minimizer of \eqref{def:local_problem}.

The first step is to establish that $\Vab{\nabla v_0}_2^2 > \rho_0^2/4$. 
Since
\begin{equation*}\label{eq:close}
\frac{\rho_0^2}{33} > \wt{e}(g,m) = \cE_g(v_n) + o(1) ,
\end{equation*}
\cref{l:wall} implies $\Vab{\nabla v_n}_2^2 \geq \rho_0^2$. 
Indeed, note that $\Vab{\nabla v_n}_2^2 \geq \frac{\rho_0^2}{4}$ a priori, uniformly in $g$. If $\Vab{\nabla v_n}_2^2<\rho_0^2$, then 
we have by \cref{l:wall}
\[
	\frac{\rho_0^2}{33}>\wt{e}(g,m)=  \cE_g(v_n) + o(1)\geq \frac{1}{8} \Vab{\nabla v_n}_2^2 + o(1) \geq \frac{\rho_0^2}{32}+o(1),
\]
a contradiction.
%which yields a contradiction with \eqref{eq:local_to_global} in the limit $g\to g^*_d(m)$. 
Suppose that $\Vab{\nabla v_0}_2^2 \leq \rho_0^2/4$ holds; 
then by setting $w_n \coloneq v_n - v_0$, 
\[
\rho_0^2 \leq \Vab{\nabla v_n}_2^2 = \Vab{\nabla v_0 }_2^2 + \Vab{\nabla w_n}_2^2 + o(1) 
\leq \frac{\rho_0^2}{4} + \Vab{\nabla w_n}_2^2 + o(1).
\]
This leads to $\Vab{\nabla w_n}_2^2 > \rho_0^2 / 2$ and $w_n \in \mathcal{A}_{ \Vab{w_n}_2^2 }$ for all sufficiently large $n$. 
From $v_0 w_n \to 0$ strongly in $L^q(\R^d)$ for each $q \in [1,2^*/2)$, it follows that 
\[
\cV(v_n) = \cV(v_0 + w_n) = \cV(v_0) + \cV(w_n) + o(1), 
\]
and hence 
\[
\wt{e} (g,m) = \cE_g(v_n) + o(1) = \cE_g(v_0) + \cE_g(w_n) + o(1).
\]
Recalling $ \Vab{v_0}_2^2 \leq m$ yields $\cE_g(v_0) \geq e(g, \Vab{v_0}_2^2  ) \geq e(g, m ) = 0$. 
Consider now two cases: if $\Vab{v_0}_2^2 = m$, then $\Vab{w_n}_2^2 \to 0$ and $\cV (w_n) \to 0$ hold. 
Thus, the fact $\Vab{\nabla w_n}_2^2 > \rho_0^2 / 2$ gives 
\[
\frac{\rho_0^2}{33} > \wt{e}(g,m) \geq \liminf_{n \to \infty} \cE_g (w_n) 
= \liminf_{n \to \infty} \frac{1}{2} \Vab{\nabla w_n}_2^2 \geq \frac{\rho_0^2}{4},
\]
which is absurd.

On the other hand, when $ 0 < \Vab{v_0}_2^2 < m$, then as $\Vab{w_n}_2^2 \to m - \Vab{v_0}_2^2$, 
\cref{Lem:mono} and $w_n \in \mathcal{A}_{ \Vab{w_n}_2^2 }$  lead to 
\[
\wt{e} (g,m)  \geq \cE_g (v_0) +  \liminf_{n \to \infty} \cE_g(w_n) 
\geq  
\cE_g (v_0) + \wt{e} (g, m - \Vab{v_0}_2^2 )
\geq 
\cE_g (v_0) + \wt{e} (g, m ). 
\]
By $g_3 \leq g < g^*_d(m)$ and $0 \geq e(g, \Vab{v_0}_2^2 ) \geq e(g,m) = 0$, 
we see that $g^*_d( \Vab{v_0}_2^2 ) \geq g^*_d(m)$, $\cE_g(v_0) = 0$ and 
$v_0$ is a global minimizer. However, this contradicts the nonexistence result in \cref{thm}. 
Thus, the inequality  $\Vab{\nabla v_0}_2^2 > \rho_0^2/4$ follows.

Noting that the splitting 
\[
\cE_g(v_n) = \cE_g(v_0) + \cE_g(w_n) + o(1)
\]
holds, by $\liminf_{n \to \infty} \cE_g(w_n) \geq e(g, m - \Vab{v_0}_2^2 ) \geq e(g,m) = 0$, 
we see from 
the fact $v_0 \in \mathcal{A}_{\Vab{v_0}_2^2}$ and \cref{Lem:mono} that 
\[
\wt{e} (g,m) \geq \cE_g (v_0) +  \liminf_{n \to \infty} \cE_g(w_n) \geq \cE_g(v_0) \geq \wt{e} ( g,  \Vab{v_0}_2^2  ) 
\geq \wt{e} (g,m).
\]
Hence, 
\begin{equation}\label{eq:tilde-e}
	\wt{e} (g,m) = \wt{e} (g, \Vab{v_0}_2^2 ) = \cE_g( v_0 ). %\quad \text{$v_0 \in \mathcal{A}_{\Vab{v_0}_2^2}$ is a minimizer}.
\end{equation}
Thus, to conclude, it remains to show that $\Vab{v_0}_2^2 = m$. \eqref{eq:tilde-e} shows that $v_0$ minimizes $\mathcal{E}_g$ on the set $\mathcal{A}_{\Vab{v_0}_2^2}$, and it is thus a critical point. This implies that there exists $\lambda_0 \in \R$ such that 
\[
-\Delta v_0 + \lambda_0 v_0 = \frac{g}{2} \Bab{ V * v_0^2 + \check{V} * v_0^2 } v_0 \quad \text{in}  \ \R^d, \quad 
\check{V} (x) \coloneq V(-x). 
\] 
%If this does not hold, by \eqref{eq:tilde-e}, 
When $\lambda_0 \neq 0$, it follows that 
\begin{equation}\label{eq:deri-Eg}
\odv*{ \cE_g ( t v_0 ) }{t}_{t=1} = \Vab{\nabla v_0}_2^2 - g \int_{\R^d} (V*v_0^2) v_0^2 \odif{x} 
= - \lambda_0 \Vab{v_0}_2^2 \neq 0.
\end{equation}
Suppose that $\|v_0\|_2^2<m$. If $\lambda_0 > 0$, then by $\Vab{\nabla v_0}_2^2 > \rho_0^2/4$, we notice that 
$\Vab{(1+\e) v_0 }_2^2 < m$ and $(1+\e) v_0 \in \mathcal{A}_{ (1+\e)^2 \Vab{v_0}_2^2 }$ for sufficiently small $\e>0$. 
\cref{Lem:mono} with \eqref{eq:tilde-e} leads to
\[
\wt{e} (g,m) \leq \wt{ e} ( g, (1+\e)^2 \Vab{v_0}_2^2 ) \leq  \cE_g( (1+\e) v_0  ) < \cE_g( v_0 ) = \wt{e} (g,m),
\]
which is a contradiction. 
In a similar way, if $\lambda_0 < 0$, then 
$(1-\e) v_0 \in \mathcal{A}_{ (1-\e)^2 \Vab{v_0}_2^2 }$ and \eqref{eq:deri-Eg} give 
\[
\wt{e} (g, m ) \leq \wt{e} (g, \Vab{(1-\e) v_0}_2^2) 
\leq \cE_g( (1-\e) v_0 ) < \cE_g(v_0) = \wt{e} (g, m ),
\]
which is absurd. 
Therefore, $\lambda_0 = 0$ holds. However, in this case, note that 
\[
\odv*[order={1}]{\cE_g(tv_0) }{t}_{t=1} = 0, \quad 
\odv*[order={2}]{\cE_g(tv_0) }{t}_{t=1} 
= \Vab{\nabla v_0}^2_2 - 3g \int_{\R^d} (V*v_0^2) v_0^2 \odif{x} = - 2 \Vab{\nabla v_0}_2^2 < 0
\]
Hence, we can argue as in the case $\lambda_0 > 0$ and obtain a contradiction. 
Thus, $\Vab{v_0}_2^2 = m$ holds and we see from \eqref{eq:tilde-e} that $v_0$ is the desired minimizer. 
Since $\Vab{v_n}_2^2 \to \Vab{v_0}_2^2 = m$, we can show that $\Vab{v_n-v_0}_2^2 \to 0$. 
Moreover, from $\cV(v_n) \to \cV(v_0)$ and 
\[
\wt{e} (g,m) = \lim_{n \to \infty} \cE_g(v_n) \geq \cE_g(v_0) \geq \wt{e} (g,m),
\]
it follows that $\Vab{\nabla v_n}_2^2 \to \Vab{\nabla v_0}_2^2$. 
Therefore, $\Vab{\nabla v_n - \nabla v_0}_2^2 \to 0$ and 
$v_n \to v_0$ strongly in $H^1(\R^d)$. 
This completes the proof .
\end{proof}

\section*{Acknowledgements}
This work was initiated at the conference ``Variational methods in applications to PDEs" held in B\c{e}dlewo near Poznań, Poland,
and the authors would like to express their gratitude to the organizers of the conference. 
They are grateful to B. Bieganowski and R. Seiringer for very important insights. 
It is also a pleasure to thank K. Jachymski and K. Jodłowski for discussions on the physics background. 
The authors would like to thank Goro Akagi and Rikuya Kakinuma for pointing their attention to \cite{JeLu22a}. 
This work was supported by JSPS KAKENHI Grant Number JP24K06802.

\subsection*{Data availability statement} No data was collected nor generated during this study.

\appendix

\section{Computations on the case $d=1$}\label{Sec7}

Here we give an example of $V$ with \ref{A1} and \ref{A2}, and $g_d^*(m) > 0$ holds for every $m > 0$ when $d=1$: 
%This is a note to see that the positivity of $g^*_d(m)$ when $d=1$ is delicate (depends on $V$)

\begin{proposition}\label{prop:1d}
	Suppose $d=1$. There exists $V$ with {\rm \ref{A1}} and {\rm \ref{A2}} such that 
	$g^*_1(m) > 0$ holds for any $m > 0$. 
\end{proposition}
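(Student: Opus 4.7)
The plan is to exhibit an even, real-valued $V \in L^1(\R)$ satisfying {\rm \ref{A1}} and {\rm \ref{A2}} but with $\int_\R V(x)\,dx < 0$, and then to prove that for this $V$ the interaction energy $\cV$ is uniformly dominated by $\cK$ on $S_m$, which forces $e(g,m) = 0$ for all $g$ in some interval $(0, g_0)$. An explicit model is
\[
V := \chi_{[-1,1]} - \alpha \chi_{[-R,-2] \cup [2,R]}, \qquad \alpha(R-2) > 1:
\]
this lies in $L^1(\R)$, equals $1$ on $B(0,1)$ (so {\rm \ref{A1}} and {\rm \ref{A2}} hold with $r_0 = 1$, $V_0 = 1$), is even and compactly supported (in particular $M_1 := \int_\R |z| |V(z)| \, dz < \infty$), and satisfies $\int_\R V = 2 - 2\alpha(R-2) < 0$.

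The core analytic step is to prove the inequality $\cV(u) \leq C(V,m)\, \cK(u)$ for every $u \in S_m$. Because $V$ is even I rewrite $\cV(u) = \tfrac{1}{4} \int_\R V(z) R(z)\,dz$, where $R(z) := \int_\R u^2(x) u^2(x+z)\,dx$ is the autocorrelation of $u^2$, and split $R(z) = R(0) + (R(z) - R(0))$ so that
\[
\cV(u) = \frac{R(0)}{4} \int_\R V(z)\,dz + \frac{1}{4} \int_\R V(z) (R(z) - R(0))\,dz.
\]
The first term is $\leq 0$ since $R(0) = \|u\|_4^4 \geq 0$ and $\int V < 0$. For the second, I use that $R \in C^1(\R)$ with $R'(z) = 2 \int_\R u^2(x) u(x+z) u'(x+z)\,dx$; Cauchy--Schwarz together with the 1D Gagliardo--Nirenberg-type bounds $\|u\|_\infty^2 \leq 2 \|u\|_2 \|\nabla u\|_2$ and $\|u\|_4^4 \leq 2 m^{3/2} \|\nabla u\|_2$ yields $\|R'\|_\infty \leq C_0\, m\, \cK(u)$ for a universal $C_0$. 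Consequently $|R(z) - R(0)| \leq C_0\, m\, |z|\, \cK(u)$, and integrating against $V$ gives a bound $C(V,m)\, \cK(u)$ with $C(V,m) = \tfrac{C_0}{4} m M_1$.

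Combining the two contributions, $\cV(u) \leq C(V,m) \cK(u)$, hence $\cE_g(u) \geq (1 - g C(V,m)) \cK(u) \geq 0$ for every $u \in S_m$ whenever $g \in (0, 1/C(V,m))$. Together with the upper bound $e(g,m) \leq 0$ from \cref{cont_mon}, this forces $e(g,m) = 0$ on the whole interval $(0, 1/C(V,m))$, and the characterization of $g_1^*(m)$ in \cref{l:gstar} then yields $g_1^*(m) \geq 1/C(V,m) > 0$, as claimed. The main obstacle (and really the entire point) is that the standard bound $\cV(u) = O(\|V\|_1 m^{3/2} \|\nabla u\|_2)$ from Young plus Gagliardo--Nirenberg grows only like $\sqrt{\cK(u)}$ and is therefore useless for small $\cK$; the cancellation supplied by $\int V < 0$ together with the Lipschitz control on $R$ furnished by the finite first moment of $V$ is precisely what upgrades the estimate to a genuinely quadratic $\cV(u) = O(\cK(u))$, which is what makes nonbinding for small $g$ possible.
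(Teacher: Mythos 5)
Your proof is correct, and it takes a genuinely different route from the paper's. The paper also picks a potential with $\int_{\R} V<0$ (a positive bump on $|x|\le\e$ surrounded by a negative annulus) but handles the cancellation by a hands-on shift comparison: it writes $u(s+2\e)=u(s)+2\e\int_0^1 u'(s+2\theta\e)\,d\theta$ to show that the negative annular contribution dominates the central positive one up to an error $\e\, m\,\Vab{u'}_2^2$, an argument tailored to that particular step function. You instead pass to the autocorrelation $R(z)=\int u^2(x)u^2(x+z)\,dx$, split off the (nonpositive) term $\tfrac{R(0)}{4}\int V$, and control the remainder by the Lipschitz bound $\vab{R(z)-R(0)}\le \Vab{R'}_\infty\vab{z}$ with $\Vab{R'}_\infty\le 2\Vab{u}_\infty^2\Vab{u}_2\Vab{u'}_2\le 8m\,\cK(u)$, integrated against the first moment $M_1=\int\vab{z}\vab{V(z)}\,dz$. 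This is cleaner and strictly more general: it shows that \emph{any} $V\in L^1(\R)$ satisfying \ref{A2}, $\int_\R V<0$ and $\int_\R \vab{z}\vab{V(z)}\,dz<\infty$ yields $g_1^*(m)>0$ for all $m$, whereas the paper's computation is bound to its explicit $V_\e$. Both arguments correctly identify the key obstacle you name — the Young/Gagliardo--Nirenberg bound $\cV(u)=O(\Vab{\nabla u}_2)$ is useless for small $\cK(u)$, and the sign of $\int V$ is what upgrades it to $\cV(u)=O(\cK(u))$. Two cosmetic remarks: the bound $\Vab{u}_4^4\le 2m^{3/2}\Vab{\nabla u}_2$ you quote is not actually used (only $\Vab{u}_\infty^2\le 2\Vab{u}_2\Vab{u'}_2$ enters the estimate of $R'$), and for complex-valued $u\in S_m$ you should run the estimate with $\vab{u}$ and the pointwise inequality $\vab{\,\vab{u}'\,}\le\vab{\nabla u}$, which changes nothing. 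Your conclusion $\cE_g(u)\ge(1-gC(V,m))\cK(u)\ge 0$ gives $e(g,m)=0$ for $g\le 1/C(V,m)$, which together with \cref{cont_mon} and \cref{l:gstar} is exactly what is needed.
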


\begin{proof}
Let $m > 0$ be given, fix $\e \in (0,1/4)$ and consider 
\[
V_\e (x) := \begin{dcases}
	1 & (\vab{x} \leq \e), \\
	-2 & (\e < \vab{x} < 1),\\
	0 & (1 \leq \vab{x}). 
\end{dcases}
\]
Notice that $V_\e$ satisfies \ref{A1} and \ref{A2}. To show $g^*_1(m) > 0$, 
it is enough to prove that for any sufficiently small $g>0$, 
\begin{equation*}
	\cE_g(u) > 0 \quad \text{for all $u \in S_m$}.
\end{equation*}
By the definition of $V_\e$, for each $u \in S_m$, 
\[
- \cV (u) = - \frac{1}{4} \int_{\R} dx \int_{x-\e}^{x+\e} u^2 (x) u^2(y) dy + \frac{1}{2} \int_{\R} dx \int_{\e < \vab{y-x} < 1  } u^2(x) u^2(y) dy. 
\]
For the second term, from $0< \e < 1/4$ it follows that 
\[
\begin{aligned}
	\frac{1}{2} \int_{\R} dx \int_{\e < \vab{y-x} < 1} u^2 (x) u^2(y) dy 
	&\geq 
	\frac{1}{2} \int_{\R} dx \int_{x+\e}^{x+3\e} u^2(x) u^2(y) dy 
	\\
	&= 
	\frac{1}{2} \int_{\R} dx \int_{x-\e}^{x+\e} u^2(x) u^2( s + 2\e ) ds. 
\end{aligned}
\]
Since 
\[
u(s+2\e) = u(s) + 2\e \int_0^1 u'( s + 2\theta \e ) d \theta, 
\]
we obtain 
\[
\begin{aligned}
	\ab( u(s+2\e) )^2 
	&\geq 
	u^2(s) + 4\e u(s) \int_0^1 u'(s+2\theta \e) d \theta
	\\
	&\geq u^2(s) - 2 \e \Bab{ u^2(s) + \ab( \int_0^1 u'(s+2\theta \e) d \theta )^2 } 
	\\
	&\geq (1-2\e) u^2(s) - 2\e \int_0^1 \ab( u'(s + 2\theta \e) )^2 d \theta. 
\end{aligned}
\]
Thus, 
\[
\begin{aligned}
	&\frac{1}{2} \int_{\R} dx \int_{\e < \vab{y-x} < 1} u^2 (x) u^2(y) dy
	\\ 
	\geq \ &
	\frac{1}{2} \int_{\R} dx u^2(x) \int_{x-\e}^{x+\e} \ab[ (1-2\e) u^2(s) -2\e \int_0^1 \ab( u'(s+2\theta \e) )^2 d \theta ] ds
	\\
	\geq \ &
	\frac{1}{2}(1-2\e) \int_{\R} dx u^2(x) \int_{x-\e}^{x+\e} u^2(s) ds - \e \int_{\R} dx u^2(x) \int_{\R} ds 
	\int_0^1 \ab( u'(s+2\theta \e) )^2 d \theta 
	\\
	= \ & 
	\frac{1}{2}(1-2\e) \int_{\R} dx  \int_{x-\e}^{x+\e} u^2(x) u^2(y) dy -  \e \Vab{u}_{2}^2 \Vab{u'}_2^2,
\end{aligned}
\]
where Fubini's theorem is used in the last step. 
Recalling $\Vab{u}_2^2 = m$, we have 
\[
\begin{aligned}
	\cE_g (u) &= \frac{1}{2} \Vab{u'}_2^2 - g \cV(u)
	\\
	&\geq \frac{1}{2} \Vab{u'}_2^2 
	+ \frac{1 - 4\e}{4} g
	\int_{\R} dx u^2(x) \int_{x-\e}^{x+\e} u^2(y) dy - \e g m \Vab{u'}_2^2. 
\end{aligned}
\]
Since $0<\e<1/4$, if $g>0$ satisfies $\e gm < 1/2$, then $\cE_g(u) > 0$ holds for each $u \in S_m$ 
and $g^*_1(m) > 0$ follows for every $m>0$. 
\end{proof}

\subsection*{Conflict of interest} The authors have no Conflict of interest to declare for this article.

\end{document}